\numberwithin{equation}{section}
\def\demo{\noindent{\it Proof. }}
\newtheorem{theorem}{Theorem}[section]
\newtheorem{lemma}[theorem]{Lemma}
\newtheorem{proposition}[theorem]{Proposition}
\newtheorem{corollary}[theorem]{Corollary}
\newtheorem{conjecture}[theorem]{Conjecture}
\theoremstyle{definition}
\newtheorem{definition}[theorem]{Definition} 
\newtheorem{procedure}[theorem]{Procedure} 
\newtheorem{remark}[theorem]{Remark}
\newtheorem{example}[theorem]{Example}
\newcommand{\Z}{\mathbb{Z}}
\begin{document}


\title[Generalized minimum distance]
{Generalized minimum distance functions} 

\thanks{The first author was supported by COFAA-IPN and SNI,
Mexico. The second and third author were supported by SNI, Mexico. 
The fourth author was
supported by a scholarship from CONACyT, Mexico}

\author[M. Gonz\'alez-Sarabia]{Manuel Gonz\'alez-Sarabia} 
\address{M.G. Sarabia. Instituto Polit\'ecnico Nacional, 
UPIITA, Av. IPN No. 2580,
Col. La Laguna Ticom\'an,
Gustavo A. Madero C.P. 07340,
 Ciudad de M\'exico. 
Departamento de Ciencias B\'asicas}
\email{mgonzalezsa@ipn.mx}

\author[J. Mart\'\i nez-Bernal]{Jos\'e Mart\'\i nez-Bernal}
\address{
Departamento de
Matem\'aticas\\
Centro de Investigaci\'on y de Estudios Avanzados del IPN\\
Apartado Postal
14--740 \\
07000 Mexico City, D.F.
}
\email{jmb@math.cinvestav.mx}

\author[R. H. Villarreal]{Rafael H. Villarreal}
\address{
Departamento de
Matem\'aticas\\
Centro de Investigaci\'on y de Estudios
Avanzados del
IPN\\
Apartado Postal
14--740 \\
07000 Mexico City, D.F.
}
\email{vila@math.cinvestav.mx}

\author[C. Vivares]{Carlos E. Vivares}
\address{
Departamento de
Matem\'aticas\\
Centro de Investigaci\'on y de Estudios
Avanzados del
IPN\\
Apartado Postal
14--740 \\
07000 Mexico City, D.F.
}
\email{cevivares@math.cinvestav.mx}

\keywords{Reed-Muller-type codes, 
minimum distance, vanishing
ideal, degree, Hilbert function.}
\subjclass[2010]{Primary 13P25; Secondary 14G50, 94B27, 11T71.} 
\begin{abstract} 
Using commutative algebra methods we study the generalized minimum distance
function (gmd function) and the corresponding generalized footprint
function of a graded ideal in a 
polynomial ring over a field. The number of solutions that a system
of homogeneous polynomials has in
any given finite set of projective points is 
expressed as the degree of a graded ideal.
If $\mathbb{X}$ is a set of projective
points over a finite field and $I$ is its vanishing
ideal, we show
that the gmd function and the Vasconcelos
function of $I$ are equal to the $r$-th 
generalized Hamming weight of the corresponding Reed-Muller-type 
code $C_\mathbb{X}(d)$ of degree $d$. We show that the generalized footprint function 
of $I$ is a lower bound for the $r$-th generalized Hamming
weight of $C_\mathbb{X}(d)$. Then we present some applications to projective nested
cartesian codes. To give applications of our lower bound to
algebraic coding theory, we show an interesting integer inequality. Then we show
an explicit formula and a combinatorial formula
for the second generalized Hamming weight of an affine cartesian 
code.   
\end{abstract}

\maketitle 

\section{Introduction}\label{intro-section}
Let $S=K[t_1,\ldots,t_s]=\oplus_{d=0}^{\infty} S_d$ be a polynomial ring over
a field $K$ with the standard grading and let $I\neq(0)$ be a graded ideal
of $S$. In this work we extend the scope of
\cite{hilbert-min-dis} by
considering generalized footprint and minimum distance functions.
Given $d,r\in\mathbb{N}_+$, let $\mathcal{F}_{d,r}$ be the set:
$$
\mathcal{F}_{d,r}:=\{\, \{f_1,\ldots,f_r\}\subset S_d\, \vert\,
\overline{f}_1,\ldots,\overline{f}_r\, \mbox{are linearly
independent over }K, (I\colon(f_1,\ldots,f_r))\neq I\}, 
$$
where $\overline{f}=f+I$ is the class of $f$ modulo $I$, and 
$(I\colon(f_1,\ldots,f_r))=\{g\in S \vert gf_i \in I,\,\forall\, i\}$ is referred to as
an ideal quotient or colon ideal.

We denote the {\it degree\/} of $S/I$ by $\deg(S/I)$. The function 
$\delta_I\colon \mathbb{N}_+\times\mathbb{N}_+\rightarrow \mathbb{Z}$ given by 
$$
\delta_I(d,r):=\left\{\begin{array}{ll}\deg(S/I)-\max\{\deg(S/(I,F))\vert\,
F\in\mathcal{F}_{d,r}\}&\mbox{if }\mathcal{F}_{d,r}\neq\emptyset,\\
\deg(S/I)&\mbox{if\ }\mathcal{F}_{d,r}=\emptyset,
\end{array}\right.$$
is called the {\it generalized minimum distance function\/} of
$I$, or simply the {\it gmd function} of $I$. If $r=1$ one obtains
the minimum distance function of $I$ \cite{hilbert-min-dis}.   
To compute $\delta_I(d,r)$ is a
difficult problem. One of our aims is to introduce lower bounds 
for $\delta_I(d,r)$ which 
are easier to compute. 

Fix a monomial order $\prec$ on $S$. Let ${\rm in}_\prec(I)$ be the
initial ideal of $I$ and let $\Delta_\prec(I)$  be the
{\it footprint\/} of $S/I$ consisting of all the {\it standard
monomials\/} of $S/I$ with respect 
to $\prec$. The footprint of $S/I$ is also called the {\it
Gr\"obner \'escalier\/} of $I$. Given integers $d,r\geq 1$, let
$\mathcal{M}_{\prec, d, r}$ be the 
set of all subsets $M$ of $\Delta_\prec(I)_d:=\Delta_\prec(I)\cap S_d$
with $r$ distinct elements such 
that $({\rm in}_\prec(I)\colon(M))\neq {\rm
in}_\prec(I)$. 
The {\it generalized footprint
function\/} of $I$, 
denoted ${\rm fp}_I$, is the function ${\rm fp}_I\colon
\mathbb{N}_+\times\mathbb{N}_+\rightarrow \mathbb{Z}$ given
by 
$$
{\rm fp}_I(d,r):=\left\{\begin{array}{ll}\deg(S/I)-\max\{\deg(S/({\rm
in}_\prec(I),M))\,\vert\,
M\in\mathcal{M}_{\prec, d,r}\}&\mbox{if }\mathcal{M}_{\prec, d,r}\neq\emptyset,\\
\deg(S/I)&\mbox{if }\mathcal{M}_{\prec, d,r}=\emptyset.
\end{array}\right.
$$

If $r=1$ one obtains the footprint function of $I$ that was
studied in \cite{footprint-ci} from a theoretical point of
view (see \cite{hilbert-min-dis,min-dis-ci} for some applications).   
The footprint of vanishing ideals of finite sets of affine points was used in the works 
of Geil \cite{geil} and Carvalho \cite{carvalho} to study
affine Reed-Muller-type codes. Long before these two papers appeared the
footprint was used by Geil in connection with all kinds
of codes (including one-point algebraic geometric codes); see
\cite{geil-2008,geil-hoholdt,geil-pellikaan} and the references therein.   

The definition of $\delta_I(d,r)$ was motivated by the notion of 
generalized Hamming weight of a linear code \cite{helleseth,klove,wei}. For convenience we
recall this notion. Let $K=\mathbb{F}_q$ be a finite field and let $C$ be a $[m,k]$ {\it linear
code} of {\it length} $m$ and {\it dimension} $k$, 
that is, $C$ is a linear subspace of $K^m$ with $k=\dim_K(C)$. Let $1\leq r\leq k$ be an integer.  
Given a subcode $D$ of $C$ (that is, $D$ is a linear subspace of $C$),
the {\it support\/} $\chi(D)$ of $D$ is the set of non-zero positions of $D$, that is,  
$$
\chi(D):=\{i\,\vert\, \exists\, (a_1,\ldots,a_m)\in D,\, a_i\neq 0\}.
$$

The $r$-th {\it generalized Hamming weight\/} of $C$, denoted
$\delta_r(C)$, is the size of the smallest support of an
$r$-dimensional subcode. Generalized Hamming weights
have received a lot of attention; see
\cite{carvalho,ghorpade,geil,GHW2014,Pellikaan,Johnsen,olaya,schaathun-willems,tsfasman,
wei,wei-yang,Yang} and the
references therein. The study of these weights is related to 
trellis coding, $t$--resilient functions, and was motivated by 
some applications from cryptography \cite{wei}. 

The minimum distance of projective Reed-Muller-type codes has been
studied using Gr\"obner bases  and commutative algebra techniques; see
\cite{carvalho,carvalho-lopez-lopez,geil,geil-thomsen,hilbert-min-dis,algcodes}
and the references  
therein. In this work we extend these techniques to study the $r$-th
generalized Hamming weight of projective 
Reed-Muller-type codes. These linear codes are constructed
as follows. 

Let $K=\mathbb{F}_q$ be a finite field with $q$ elements,
let $\mathbb{P}^{s-1}$ be a projective space over 
$K$, and let $\mathbb{X}$ be a subset of
$\mathbb{P}^{s-1}$. The {\it vanishing ideal\/} of
$\mathbb{X}$, denoted $I(\mathbb{X})$,  is the ideal of $S$ 
generated by the homogeneous polynomials that vanish at all points of
$\mathbb{X}$. The Hilbert function of $S/I(\mathbb{X})$ is denoted by
$H_\mathbb{X}(d)$. We can write
$\mathbb{X}=\{[P_1],\ldots,[P_m]\}\subset\mathbb{P}^{s-1}$ 
with $m=|\mathbb{X}|$. Here we assume that the first non-zero entry of
each $[P_i]$ is $1$. In the special case that $\mathbb{X}$ has the
form $[X\times\{1\}]$ for some $X\subset\mathbb{F}_q^{s-1}$, we
assume that the $s$-th entry of each $[P_i]$ 
is $1$.  

Fix a degree $d\geq 1$. There is a $K$-linear map given by  
\begin{equation*}
{\rm ev}_d\colon S_d\rightarrow K^{m},\ \ \ \ \ 
f\mapsto
\left(f(P_1),\ldots,f(P_m)\right).
\end{equation*}

The image of $S_d$ under ${\rm ev}_d$, denoted by  $C_\mathbb{X}(d)$, is
called a {\it projective Reed-Muller-type code\/} of
degree $d$ on $\mathbb{X}$ \cite{duursma-renteria-tapia,GRT}. The
points in $\mathbb{X}$ are often called evaluation points in the
algebraic coding context. 
The {\it parameters} of the linear
code $C_\mathbb{X}(d)$ are:
\begin{itemize}
\item[(a)] {\it length\/}: $|\mathbb{X}|$,
\item[(b)] {\it dimension\/}: $\dim_K C_\mathbb{X}(d)$,
\item[(c)] $r$-th {\it generalized Hamming weight\/}: 
$\delta_\mathbb{X}(d,r):=\delta_r(C_\mathbb{X}(d))$. 
\end{itemize}

The contents of this
paper are as follows. In Section~\ref{prelim-section} we 
present some of the results and terminology that will be needed
throughout the paper. If $F$ is a finite set of homogeneous polynomials of
$S\setminus\{0\}$ and $V_\mathbb{X}(F)$ is the set of zeros or
projective variety of $F$ in $\mathbb{X}$, over a finite field, we show
a degree formula for counting the number of points in 
$\mathbb{X}$ that are not in $V_{\mathbb{X}}(F)$: 
$$
|\mathbb{X}\setminus V_{\mathbb{X}}(F)|=\left\{
\begin{array}{cl}
\deg(S/(I(\mathbb{X})\colon(F)))&\mbox{if }(I(\mathbb{X})\colon(F))\neq
I(\mathbb{X}),\\ 
\deg(S/I(\mathbb{X}))&\mbox{if }(I(\mathbb{X})\colon
(F))=I(\mathbb{X}),
\end{array}
\right.
$$
and a degree formula for counting the zeros of $F$ in $\mathbb{X}$ 
(Lemmas~\ref{degree-formula-for-the-number-of-non-zeros} and 
\ref{degree-formula-for-the-number-of-zeros-proj}). These degree formulas
turn out to be useful in order to prove one of our main results
(Theorem~\ref{rth-footprint-lower-bound}).   

If $\mathbb{X}$ is a finite set of projective points over a
finite field and $I(\mathbb{X})$ is its vanishing ideal, we show
that $\delta_{I(\mathbb{X})}(d,r)$ is the $r$-th generalized Hamming weight
$\delta_\mathbb{X}(d,r)$ of the corresponding Reed-Muller-type 
code $C_\mathbb{X}(d)$ (Theorem~\ref{rth-min-dis-vi}). We introduce
the {\it Vasconcelos\/} function $\vartheta_I(d,r)$ of a graded ideal
$I$ (Definition~\ref{vasconcelos-function}) and show that 
$\vartheta_{I(\mathbb{X})}(d,r)$ is also equal to $\delta_\mathbb{X}(d,r)$
(Theorem~\ref{rth-min-dis-vi}). These two abstract algebraic formulations
of $\delta_\mathbb{X}(d,r)$ gives us a new tool to study generalized Hamming weights. 
One of our main results shows that ${\rm fp}_{I(\mathbb{X})}(d,r)$ 
is a lower bound for $\delta_\mathbb{X}(d,r)$ 
(Theorem~\ref{rth-footprint-lower-bound}). The 
{\it footprint matrix\/} $({\rm fp}_{I(\mathbb{X})}(d,r))$ 
and the {\it weight matrix} $(\delta_{\mathbb{X}}(d,r))$ of $I(\mathbb{X})$ are the 
matrices of size ${\rm
reg}(S/I(\mathbb{X}))\times\deg(S/I(\mathbb{X}))$
whose $(d,r)$-entries are ${\rm fp}_{I(\mathbb{X})}(d,r)$ and 
$\delta_{\mathbb{X}}(d,r)$, respectively
(see Remark~\ref{sarabia-vila}). 
In certain cases the $r$-th columns of these two matrices are equal 
(Example~\ref{footprint-matrix}, 
Theorem~\ref{application-to-rmtc-1}). The entries of each row of the weight matrix
$(\delta_{\mathbb{X}}(d,r))$ form an increasing sequence until they
stabilize \cite{wei}. For parameterized codes the entries of each column of the weight matrix
$(\delta_{\mathbb{X}}(d,r))$ form a decreasing sequence until they
stabilize \cite[Theorem~12]{camps-sarabia-sarmiento-vila}.

In Section~\ref{pncc-section} we introduce projective nested cartesian codes
\cite{carvalho-lopez-lopez}, a type of evaluation codes that 
generalize the classical projective Reed--Muller
codes \cite{lachaud,mercier-rolland,sorensen}. It is an interesting 
open problem to find an explicit formula for the
minimum distance of a projective nested
cartesian code. Using footprints and the integer inequality of
Lemma~\ref{pepe-vila-old} we show a uniform
upper bound for the number of zeros 
in a projective nested cartesian set $\mathcal{X}$ for a family of
homogeneous polynomials of fixed degree $d$, where $d$ is close to
the regularity of the ideal $I(\mathcal{X})$ (Theorem~\ref{sep10-16}). For projective spaces, 
this upper bound agrees
with the classical upper bound of S{\o}rensen and Serre
\cite[p.~1569]{sorensen}. 

As an application of our methods using generalized minimum
distance functions we were able to find a 
simple counterexample to a conjecture of Carvalho, Lopez-Neumann 
and L\'opez \cite{carvalho-lopez-lopez}, \cite[Conjecture~6.2]{hilbert-min-dis}
(Example~\ref{counterexample-nested-2-2-4}). 
In Sections~\ref{examples-section} and
\ref{procedures-section} we show some examples
and implementations in {\it Macaulay\/}$2$ \cite{mac2} that illustrate how some
of our results can be used in practice. A finite set of generators for the 
vanishing ideal of a projective space, over a finite field, was 
found by Mercier and Rolland \cite[Corollaire~2.1]{mercier-rolland}.
More generally, for the vanishing ideal of a projective nested
cartesian set a finite set of generators was determined in 
\cite[Lemma~2.4]{carvalho-lopez-lopez}. These results are especially
useful for computational purposes
(Examples~\ref{counterexample-nested-2-2-4} and \ref{dec4-15}).  

To show some other applications to algebraic coding theory we prove the following interesting
and non-trivial inequality.

\smallskip

\noindent {\bf Theorem~\ref{pepe-vila}}{\it\ Let $d\geq 1$ and 
$1\leq e_1\leq\cdots\leq e_m$ be integers. Suppose 
$1\leq a_i\leq e_i$ and $1\leq b_i\leq e_i$, for $i=1,\ldots,m$, are integers such that 
$d=\sum_ia_i=\sum_i
b_i$ and $a\neq b$. Then 
\[\pi(a,b)\geq
\left(\sum_{i=1}^m a_i-\sum_{i=k+1}^me_i-(k-2)\right)e_{k+1}\cdots
e_m-e_{k+2}\cdots e_m\] for $k=1,\ldots,m-1$, where 
$\pi(a,b)=\prod_{i=1}^ma_i+\prod_{i=1}^mb_i-\prod_{i=1}^m\min(a_i,b_i)$.}

\smallskip

We give two more applications. The first is the 
following explicit formula for the second generalized Hamming weight of an affine
cartesian code. 

\smallskip

\noindent {\bf Theorem~\ref{vila-pepe-sarabia-2}}{\it\ Let $A_i$, $i=1,\ldots,s-1$, be
subsets of $\mathbb{F}_q$ and let $\mathbb{X}\subset\mathbb{P}^{s-1}$ be the 
projective set $\mathbb{X}=[A_1\times\cdots\times
A_{s-1}\times\{1\}]$. If $d_i=|A_i|$ for $i=1,\ldots,s-1$ and
$2\leq d_1\leq \cdots\leq d_{s-1}$, then
$$
\delta_\mathbb{X}(d,2)=\left\{\hspace{-1mm}
\begin{array}{ll}\left(d_{k+1}-\ell+1\right)d_{k+2}\cdots
d_{s-1}-d_{k+3}\cdots d_{s-1}&\mbox{ if }
k<s-3,\\
\left(d_{k+1}-\ell+1\right)d_{k+2}\cdots d_{s-1}-1&\mbox{ if }
k=s-3,\\
\qquad \qquad d_{s-1}-\ell+1&\mbox{ if } k=s-2,
\end{array}
\right.
$$
where $0\leq k\leq s-2$ and $\ell$ are integers, 
$d=\sum_{i=1}^{k}\left(d_i-1\right)+\ell$, and $1\leq \ell \leq
d_{k+1}-1$.
}

\smallskip

Using this result one recover the case when $\mathbb{X}$ is a
projective torus in $\mathbb{P}^{s-1}$ 
\cite[Theorem~18]{camps-sarabia-sarmiento-vila} 
(Corollary~\ref{theorem3}).   
The second applications of this paper gives a combinatorial formula for
the second generalized Hamming weight 
of an affine cartesian code, which is quite different from the
corresponding formula of \cite[Theorem~5.4]{GHWCartesian}, and show 
that in this case the second generalized Hamming weight is equal to the
second generalized footprint. 

\smallskip 

\noindent {\bf Theorem~\ref{application-to-rmtc-1}}
{\it  Let $\mathcal{P}_d$ be the set of all pairs $(a,b)$,
$a,b$ in $\mathbb{N}^s$, $a=(a_i)$, $b=(b_i)$, 
such that $a\neq b$, $d=\sum_ia_i=\sum_ib_i$, $1\leq
a_i,b_i\leq d_i-1$ for $i=1,\ldots,n$, $n:=s-1$, $a_i\neq 0$ and $b_j\neq 0$
for some $1\leq i,j\leq n$. If $\mathbb{X}=[A_1\times\cdots\times
A_{n}\times\{1\}]$, with $A_i\subset\mathbb{F}_q$, $d_i=|A_i|$, and
$2\leq d_1\leq\cdots\leq d_{n}$, 
then
$$
{\rm fp}_{I(\mathbb{X})}(d,2)=\delta_\mathbb{X}(d,2)=\min\left\{P(a,b)\vert\,
(a,b)\in \mathcal{P}_d\right\}\ \mbox{ for }\ 
d\leq\textstyle\sum_{i=1}^n (d_i-1),
$$
where $P(a,b)=\prod_{i=1}^{n}(d_i-a_i)+
\prod_{i=1}^{n}(d_i-b_i)-\prod_{i=1}^{n}\min\{d_i-a_i,d_i-b_i\}$.}

\smallskip

In case the set of evaluation points $\mathbb{X}$ lie on an affine algebraic variety over
a finite field $\mathbb{F}_q$, the work done by Heijnen and Pellikaan
\cite{Pellikaan}, 
though formulated in a different
language, relates footprints and generalized Hamming weights and
introduce methods to study affine cartesian codes
(cf.~\cite[Section~7]{Pellikaan}). These methods were used in
\cite{GHWCartesian} to determine the generalized Hamming weights of
these codes. 
 
There is a nice combinatorial formula to compute 
the generalized Hamming weights of $q$-ary Reed-Muller codes  
\cite[Theorem~3.14]{Pellikaan}, and there is an easy to evaluate formula for the second
generalized Hamming weight of a projective torus 
\cite[Theorem~18]{camps-sarabia-sarmiento-vila}. There is also a
recent expression for the $r$-th generalized Hamming weight
of an affine cartesian code \cite[Theorem~5.4]{GHWCartesian}, which
depends on the $r$-th monomial in ascending lexicographic order of a
certain family of monomials. It is an interesting
problem to find alternative, easy to evaluate formulas for the $r$-th
generalized Hamming weight of                 
an affine cartesian code.

\smallskip

For all unexplained
terminology and additional information  we refer to 
\cite{BHer,CLO,Eisen} (for the theory of Gr\"obner bases, 
commutative algebra, and Hilbert functions), and
\cite{MacWilliams-Sloane,tsfasman} (for the theory of
error-correcting codes and linear codes). 

\section{Preliminaries}\label{prelim-section}

In this section we 
present some of the results that will be needed throughout the paper
and introduce some more notation. All results of this
section are well-known. To avoid repetitions, we continue to employ
the notations and 
definitions used in Section~\ref{intro-section}.

\paragraph{\bf Generalized Hamming weights} 
Let $K=\mathbb{F}_q$ be a finite field and let $C$ be a $[m,k]$ {\it linear
code} of {\it length} $m$ and {\it dimension} $k$.

The $r$-th {\it generalized Hamming weight\/} of $C$, denoted 
$\delta_r(C)$, is the size of the smallest support of an
$r$-dimensional subcode, that is,
$$
\delta_r(C):=\min\{|\chi(D)|\,\colon\, D\mbox{ is a linear subcode of
}C\mbox{ with }\dim_K(D)=r\}.
$$
\quad The {\it weight hierarchy\/} of $C$ is the sequence
$(\delta_1(C),\ldots,\delta_k(C))$. The integer $\delta_1(C)$ is
called the {\it minimum
distance\/} of $C$ and is denoted by $\delta(C)$. 
 According to \cite[Theorem~1,
Corollary~1]{wei}   
the weight hierarchy is an
increasing sequence 
$$
1\leq\delta_1(C)<\cdots<\delta_k(C)\leq m,
$$
and $\delta_r(C)\leq m-k+r$ for $r=1,\ldots,k$. For $r=1$ this is the
Singleton bound for the minimum distance. Notice that 
$\delta_r(C)\geq r$. 

Recall that the {\it support\/} $\chi(\beta)$ of a vector $\beta\in K^m$ 
is $\chi(K\beta)$, that is, $\chi(\beta)$ is the set of non-zero
entries of $\beta$.

\begin{lemma}\label{seminar} Let $D$ be a subcode of $C$ of dimension $r\geq 1$. If 
$\beta_1,\ldots,\beta_r$ is a $K$-basis for $D$ with
$\beta_i=(\beta_{i,1},\ldots,\beta_{i,m})$ for $i=1,\ldots,r$, then 
$\chi(D)=\cup_{i=1}^r\chi(\beta_i)$ and the number of elements of
$\chi(D)$ is the number of non-zero columns of the matrix:
$$   
\left[\begin{matrix}
\beta_{1,1}&\cdots&\beta_{1,i}&\cdots&\beta_{1,m}\\
\beta_{2,1}&\cdots&\beta_{2,i}&\cdots&\beta_{2,m}\\
\vdots&\cdots&\vdots&\cdots&\vdots\\
\beta_{r,1}&\cdots&\beta_{r,i}&\cdots&\beta_{r,m}
\end{matrix}\right].
$$
\end{lemma}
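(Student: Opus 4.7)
The plan is to prove the set equality $\chi(D) = \bigcup_{i=1}^r \chi(\beta_i)$ first, and then read off the column-count statement as an immediate consequence, since both halves follow almost directly from the definition of the support of a vector.

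For the inclusion $\bigcup_{i=1}^r \chi(\beta_i) \subseteq \chi(D)$, I would simply note that each basis vector $\beta_i$ lies in $D$, so the one-dimensional subcode $K\beta_i$ is contained in $D$. By the definition of support of a vector recalled in the paragraph preceding the lemma, $\chi(\beta_i) = \chi(K\beta_i)$, and any nonzero position witnessed by $\beta_i \in D$ is a nonzero position for $D$, hence $\chi(\beta_i) \subseteq \chi(D)$.

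For the reverse inclusion, I would take $j \in \chi(D)$ and pick $\alpha = (\alpha_1,\ldots,\alpha_m) \in D$ with $\alpha_j \neq 0$. Expanding $\alpha = \sum_{i=1}^r c_i \beta_i$ in the basis, the $j$-th coordinate is $\alpha_j = \sum_{i=1}^r c_i \beta_{i,j} \neq 0$, so at least one summand is nonzero, forcing some $\beta_{i,j} \neq 0$; that is, $j \in \chi(\beta_i) \subseteq \bigcup_{i=1}^r \chi(\beta_i)$.

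Once the set equality is in hand, the second assertion is automatic: the $j$-th column of the displayed matrix is the zero column if and only if $\beta_{i,j} = 0$ for every $i = 1,\ldots,r$, which by the equality just proved is equivalent to $j \notin \chi(D)$. Hence the set of indices of nonzero columns is exactly $\chi(D)$, and its cardinality is $|\chi(D)|$. There is no real obstacle in this argument; the only thing to be careful about is to invoke the convention that $\chi(\beta) = \chi(K\beta)$ when passing between the support of a vector and the support of the line it spans.
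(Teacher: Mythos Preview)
Your argument is correct and complete. The paper itself does not prove this lemma: it is stated in the Preliminaries section among results explicitly described as ``well-known,'' with no proof given. Your double-inclusion argument is exactly the standard one, and the deduction of the column-count statement from the set equality is immediate as you say.
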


\paragraph{\bf Commutative algebra} Let $I\neq(0)$ be a graded ideal
of $S$ of Krull dimension $k$. The {\it Hilbert function} of $S/I$ is: 
$$
H_I(d):=\dim_K(S_d/I_d),\ \ \ d=0,1,2,\ldots,
$$
where $I_d=I\cap S_d$. By a theorem of Hilbert \cite[p.~58]{Sta1},
there is a unique polynomial 
$h_I(x)\in\mathbb{Q}[x]$ of 
degree $k-1$ such that $H_I(d)=h_I(d)$ for  $d\gg 0$. The
degree of the zero polynomial is $-1$.  

The {\it degree\/} or {\it multiplicity\/} of $S/I$ is the 
positive integer 
$$
\deg(S/I):=\left\{\begin{array}{ll}(k-1)!\, \lim_{d\rightarrow\infty}{H_I(d)}/{d^{k-1}}
&\mbox{if }k\geq 1,\\
\dim_K(S/I) &\mbox{if\ }k=0.
\end{array}\right.
$$ 

We will use the following 
multi-index notation: for $a=(a_1,\ldots,a_s)\in\mathbb{N}^s$, set
$t^a:=t_1^{a_1}\cdots t_s^{a_s}$. The multiplicative group of the
field $K$ is
denoted by 
$K^*$. As usual ${\rm ht}(I)$
will denote the height of the ideal $I$. By the {\rm dimension\/} 
of $I$ (resp. $S/I$) we mean the Krull dimension of $S/I$. The Krull dimension of
$S/I$ is denoted by $\dim(S/I)$.

One of the most useful and well-known facts about the degree is its additivity:

\begin{proposition}{\rm(Additivity of the degree
\cite[Proposition~2.5]{prim-dec-critical})}\label{additivity-of-the-degree}
If $I$ is an ideal of $S$ and 
$I=\mathfrak{q}_1\cap\cdots\cap\mathfrak{q}_m$ 
is an irredundant primary
decomposition, then
$$
\deg(S/I)=\sum_{{\rm ht}(\mathfrak{q}_i)={\rm
ht}(I)}\hspace{-3mm}\deg(S/\mathfrak{q}_i).$$
\end{proposition}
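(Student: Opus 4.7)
The plan is to reduce to a standard computation with Hilbert polynomials by exploiting the short exact sequence induced by intersections, together with two dimension-theoretic facts about the polynomial ring $S$: the Hilbert polynomial of a graded quotient $S/J$ has degree $\dim(S/J)-1$, and its leading coefficient determines $\deg(S/J)$.

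First, I would partition the primary decomposition. Let $P=\{i \mid {\rm ht}(\mathfrak{q}_i)={\rm ht}(I)\}$, and set $J=\bigcap_{i\in P}\mathfrak{q}_i$ and $L=\bigcap_{i\notin P}\mathfrak{q}_i$, so $I=J\cap L$. For every $j\notin P$ we have ${\rm ht}(\mathfrak{q}_j)>{\rm ht}(I)$, hence $\dim(S/\mathfrak{q}_j)<\dim(S/I)$, and therefore $\dim(S/L)<\dim(S/I)$. From the standard exact sequence
\[
0\longrightarrow S/I\longrightarrow S/J\oplus S/L\longrightarrow S/(J+L)\longrightarrow 0
\]
one reads off $H_I(d)=H_J(d)+H_L(d)-H_{J+L}(d)$ for $d\gg 0$. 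Since the last two terms come from modules of strictly smaller dimension, they contribute only to lower-order terms of the Hilbert polynomial, so comparing leading coefficients (in degree $\dim(S/I)-1$) yields $\deg(S/I)=\deg(S/J)$.

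Next, I would prove by induction on $|P|$ that $\deg(S/J)=\sum_{i\in P}\deg(S/\mathfrak{q}_i)$. The base case $|P|=1$ is trivial. For the inductive step, write $J=\mathfrak{q}_1\cap J'$ with $J'=\bigcap_{i\in P,\, i\neq 1}\mathfrak{q}_i$, and use again
\[
0\longrightarrow S/J\longrightarrow S/\mathfrak{q}_1\oplus S/J'\longrightarrow S/(\mathfrak{q}_1+J')\longrightarrow 0.
\]
The same leading-coefficient argument reduces everything to checking that $\dim(S/(\mathfrak{q}_1+J'))<\dim(S/I)$. This is the crux and, I expect, the one place the plan needs genuine work: one must argue that every minimal prime $\mathfrak{p}$ over $\mathfrak{q}_1+J'$ contains $\mathfrak{p}_1=\sqrt{\mathfrak{q}_1}$ together with some $\mathfrak{p}_i=\sqrt{\mathfrak{q}_i}$ for $i\in P\setminus\{1\}$; since $\mathfrak{p}_1$ and $\mathfrak{p}_i$ are distinct minimal primes of $I$ and hence incomparable, $\mathfrak{p}$ strictly contains $\mathfrak{p}_1$, and because $S$ is a catenary polynomial ring one gets ${\rm ht}(\mathfrak{p})>{\rm ht}(\mathfrak{p}_1)={\rm ht}(I)$.

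Finally, by induction applied to $J'$ (whose minimal primary components are exactly $\{\mathfrak{q}_i\}_{i\in P,\, i\neq 1}$, all of height ${\rm ht}(I)$), the exact sequence together with the previous dimension argument gives
\[
\deg(S/J)=\deg(S/\mathfrak{q}_1)+\deg(S/J')=\deg(S/\mathfrak{q}_1)+\sum_{i\in P,\, i\neq 1}\deg(S/\mathfrak{q}_i),
\]
which combined with $\deg(S/I)=\deg(S/J)$ yields the desired formula. The main obstacle, as noted, is the incomparability/catenary step controlling $\dim(S/(\mathfrak{q}_1+J'))$; everything else is a routine use of additivity of Hilbert polynomials along short exact sequences.
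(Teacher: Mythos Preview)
The paper does not actually prove this proposition; it is quoted from \cite[Proposition~2.5]{prim-dec-critical} and stated in the preliminaries section without argument. So there is no ``paper's own proof'' to compare against.

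Your argument is correct and is the standard one. A couple of small points worth tightening. First, in the initial reduction step you should note explicitly that $\dim(S/(J+L))\leq\dim(S/L)<\dim(S/I)$ (since $J+L\supseteq L$), so that both correction terms in the Hilbert-polynomial identity are of lower order; you implicitly use this but only state it for $S/L$. Second, the edge case $P=\{1,\ldots,m\}$ (so $L=S$ and $S/L=0$) is harmless but deserves a word. Third, for the key dimension drop you invoke that a prime containing $J'=\bigcap_{i\in P,\,i\neq 1}\mathfrak{q}_i$ must contain some $\mathfrak{q}_i$; this is the elementary fact that a prime containing a finite intersection contains one of the intersectands, and together with irredundancy of the decomposition (so the $\mathfrak{p}_i$ are pairwise distinct) and the catenary property of $S$ your conclusion ${\rm ht}(\mathfrak{p})>{\rm ht}(I)$ follows exactly as you say. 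With those cosmetic additions the proof is complete.
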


If $F\subset S$, the {\it ideal quotient\/} of $I$ with
respect to $(F)$ is given by $(I\colon(F))=\{h\in S\vert\, hF\subset I\}$.
An element $f$ is called a {\it zero-divisor\/} of $S/I$ if there is
$\overline{0}\neq \overline{a}\in S/I$ such that
$f\overline{a}=\overline{0}$, and $f$ is called {\it regular} on
$S/I$ if $f$ is not a zero-divisor. Thus $f$ is a zero-divisor if
and only if $(I\colon f)\neq I$. An associated prime of $I$ is a prime
ideal $\mathfrak{p}$ of $S$ of the form $\mathfrak{p}=(I\colon f)$
for some $f$ in $S$.
 
\begin{theorem}{\cite[Lemma~2.1.19,
Corollary~2.1.30]{monalg-rev}}\label{zero-divisors} If $I$ is an
ideal of $S$ and   
$I=\mathfrak{q}_1\cap\cdots\cap\mathfrak{q}_m$ is 
an irredundant primary decomposition with ${\rm
rad}(\mathfrak{q}_i)=\mathfrak{p}_i$, then the set of zero-divisors
$\mathcal{Z}(S/I)$  of $S/I$ is equal to
$\bigcup_{i=1}^m\mathfrak{p}_i$, 
and $\mathfrak{p}_1,\ldots,\mathfrak{p}_m$ are the associated primes of
$I$. 
\end{theorem}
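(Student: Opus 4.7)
The plan is to deduce both assertions from the defining property of primary ideals --- if $\mathfrak{q}$ is $\mathfrak{p}$-primary and $ab\in\mathfrak{q}$ with $b\notin\mathfrak{q}$, then $a\in\mathfrak{p}$ --- combined with the irredundancy of the decomposition $I=\mathfrak{q}_1\cap\cdots\cap\mathfrak{q}_m$, plus Noetherianity of $S$.

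For the equality $\mathcal{Z}(S/I)=\bigcup_{i=1}^m\mathfrak{p}_i$, I would handle the two inclusions separately. Given $f\in\mathcal{Z}(S/I)$, pick $g\notin I$ with $fg\in I$; since $g\notin I$, some component $\mathfrak{q}_i$ fails to contain $g$, and then $fg\in\mathfrak{q}_i$ together with the primary property forces $f\in\mathfrak{p}_i$. Conversely, by irredundancy one can for each $i$ choose $h_i\in\bigcap_{j\ne i}\mathfrak{q}_j\setminus\mathfrak{q}_i$, so in particular $h_i\notin I$; for $f\in\mathfrak{p}_i=\sqrt{\mathfrak{q}_i}$, taking the least $n\ge 1$ with $f^n h_i\in\mathfrak{q}_i$ yields $f\cdot(f^{n-1}h_i)\in I$ while $f^{n-1}h_i\notin I$, exhibiting $f$ as a zero-divisor on $S/I$.

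For the identification of the $\mathfrak{p}_i$ as the associated primes of $I$, I would show each containment. To see each $\mathfrak{p}_i$ is associated, observe with $h_i$ as above that $(I\colon h_i)=(\mathfrak{q}_i\colon h_i)$, because $h_i$ already lies in every other component; this colon ideal is $\mathfrak{p}_i$-primary. Using the Noetherian ascending chain condition, climb the chain of colon ideals $(I\colon h_i f_1\cdots f_t)$ for suitably chosen $f_t\in\mathfrak{p}_i$ to reach an element $g$ for which $(I\colon g)$ is prime; that prime must then equal $\mathfrak{p}_i$. Conversely, if $\mathfrak{p}=(I\colon f)$ is prime, then $\mathfrak{p}\subseteq\mathcal{Z}(S/I)=\bigcup_i\mathfrak{p}_i$, so by prime avoidance $\mathfrak{p}\subseteq\mathfrak{p}_i$ for some $i$; localizing at $\mathfrak{p}$ and exploiting uniqueness of the isolated primary components --- or equivalently, computing a primary decomposition of $(I\colon f)$ from that of $I$ --- yields the reverse containment and hence equality.

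The main obstacle is the last step, where the inclusion $\mathfrak{p}\subseteq\mathfrak{p}_i$ must be upgraded to equality; this is where Noetherianity does real work, since in non-Noetherian situations the associated primes can fail to coincide with the radicals of the primary components. The first assertion is conceptually easier, requiring only the primary property and irredundancy, and is essentially bookkeeping of the kind found in any standard commutative algebra reference such as Matsumura or Atiyah--Macdonald.
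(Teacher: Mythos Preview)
The paper does not supply its own proof of this statement: it is quoted verbatim as a known fact with the citation \cite[Lemma~2.1.19, Corollary~2.1.30]{monalg-rev}, and no proof environment follows. So there is nothing in the paper to compare your argument against; the authors are simply importing a standard result from commutative algebra.

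That said, your sketch is essentially the standard textbook proof and is correct in its broad strokes. Two places deserve tightening. First, in showing that each $\mathfrak{p}_i$ is an associated prime, the phrase ``climb the chain of colon ideals'' is vague; the clean way is to note that $(I\colon h_i)=(\mathfrak{q}_i\colon h_i)$ is $\mathfrak{p}_i$-primary, then use Noetherianity to find $n$ minimal with $\mathfrak{p}_i^n\subseteq(\mathfrak{q}_i\colon h_i)$ and pick $y\in\mathfrak{p}_i^{\,n-1}\setminus(\mathfrak{q}_i\colon h_i)$, so that $(I\colon h_i y)=\mathfrak{p}_i$ directly. Second, in the converse, your first suggestion --- localizing at $\mathfrak{p}$ and invoking uniqueness of isolated components --- only handles the case where $\mathfrak{p}$ is a \emph{minimal} prime over $I$, which is not guaranteed. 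Your alternative is the one that actually works: write $(I\colon f)=\bigcap_{f\notin\mathfrak{q}_j}(\mathfrak{q}_j\colon f)$, observe each surviving factor is $\mathfrak{p}_j$-primary, take radicals to get $\mathfrak{p}=\bigcap\mathfrak{p}_j$ over a nonempty index set, and conclude by primeness of $\mathfrak{p}$ that $\mathfrak{p}=\mathfrak{p}_j$ for some $j$. With those two adjustments the argument is complete.
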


\begin{definition}
The {\it regularity\/} of $S/I$, denoted 
${\rm reg}(S/I)$, is the least integer $r_0\geq 0$ such that
$H_I(d)$ is equal to $h_I(d)$ for $d\geq r_0$.  
\end{definition}

\paragraph{\bf The footprint of an ideal} Let  $\prec$ be a monomial
order on $S$ and let $(0)\neq I\subset S$ be an ideal. If $f$ is a non-zero 
polynomial in $S$, the {\it leading
monomial\/} of $f$ 
is denoted by ${\rm in}_\prec(f)$. The {\it initial ideal\/} of $I$, denoted by
${\rm in}_\prec(I)$,  is the monomial ideal given by 
$${\rm in}_\prec(I)=(\{{\rm in}_\prec(f)|\, f\in I\}).
$$ 

A monomial $t^a$ is called a 
{\it standard monomial\/} of $S/I$, with respect 
to $\prec$, if $t^a$ is not in the ideal ${\rm in}_\prec(I)$. 
A polynomial $f$ is called {\it standard\/} if
$f\neq 0$ and $f$ is a
$K$-linear combination of standard monomials. 
The set of standard monomials, denoted $\Delta_\prec(I)$, is called the {\it
footprint\/} of $S/I$. The image of the standard polynomials of
degree $d$, under the canonical map $S\mapsto S/I$, 
$x\mapsto \overline{x}$, is equal to $S_d/I_d$, and the 
image of $\Delta_\prec(I)$ is a basis of $S/I$ as a $K$-vector space. 
This is a classical result of Macaulay (for a modern approach
see \cite[Chapter~5]{CLO}). In
particular, if $I$ is graded, then $H_I(d)$ is the number of standard
monomials of degree $d$.

\begin{lemma}{\cite[p.~2]{carvalho}}\label{nov6-15} Let $I\subset S$ be an ideal generated by
$\mathcal{G}=\{g_1,\ldots,g_r\}$, then
$$
\Delta_\prec(I)\subset\Delta_\prec({\rm in}_\prec(g_1),\ldots,{\rm
in}_\prec(g_r)).
$$
\end{lemma}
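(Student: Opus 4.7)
The plan is short because the statement reduces to an inclusion of monomial ideals, and both footprints are complements of those ideals inside the monoid of monomials.

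First I would unwind the definitions. By definition, $\Delta_\prec(I)$ is the set of monomials $t^a$ that do not lie in ${\rm in}_\prec(I)$, and similarly $\Delta_\prec(J)$, where $J:=({\rm in}_\prec(g_1),\ldots,{\rm in}_\prec(g_r))$, is the set of monomials $t^a$ that do not lie in ${\rm in}_\prec(J)$. Since $J$ is already a monomial ideal, ${\rm in}_\prec(J)=J$, so $\Delta_\prec(J)$ consists of the monomials not divisible by any ${\rm in}_\prec(g_i)$.

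Next I would establish the key containment $J\subset {\rm in}_\prec(I)$. Indeed, each $g_i$ belongs to $I$, hence ${\rm in}_\prec(g_i)\in{\rm in}_\prec(I)$ by the very definition of the initial ideal. Since ${\rm in}_\prec(I)$ is an ideal containing every ${\rm in}_\prec(g_i)$, it contains the ideal $J$ they generate.

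Finally I would conclude by taking complements in the monomial set: if $t^a\in\Delta_\prec(I)$, then $t^a\notin{\rm in}_\prec(I)$, so by the containment just established $t^a\notin J$, and therefore $t^a\in\Delta_\prec(J)$. This yields the inclusion $\Delta_\prec(I)\subset\Delta_\prec({\rm in}_\prec(g_1),\ldots,{\rm in}_\prec(g_r))$. There is no real obstacle here; the only subtlety worth flagging is that the inclusion is typically strict precisely when $\mathcal{G}$ is not a Gröbner basis of $I$, which is why the footprint of the generators' initial terms provides only a (possibly coarser) over-approximation of the true footprint.
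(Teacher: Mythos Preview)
Your argument is correct: the containment $({\rm in}_\prec(g_1),\ldots,{\rm in}_\prec(g_r))\subset{\rm in}_\prec(I)$ is immediate from the definition of the initial ideal, and passing to complements in the set of monomials gives exactly the stated inclusion of footprints. The paper does not supply its own proof of this lemma but merely cites it from \cite{carvalho}, so there is no alternative approach to compare against; your justification is the standard one and is complete.
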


\paragraph{\bf Vanishing ideal of a finite set} 
The {\it projective space\/} of 
dimension $s-1$
over the field $K$ is denoted $\mathbb{P}^{s-1}$. It is usual to denote the 
equivalence class of $\alpha$ by $[\alpha]$. 

For a given a subset $\mathbb{X}\subset\mathbb{P}^{s-1}$ define 
$I(\mathbb{X})$, the {\it vanishing ideal\/} of $\mathbb{X}$, 
as the ideal generated by the homogeneous polynomials 
in $S$ that vanish at all points of $\mathbb{X}$, and 
given a graded ideal $I\subset S$ 
define its {\it zero set\/} relative to $\mathbb{X}$ as  
$$V_\mathbb{X}(I)=\left\{[\alpha]\in \mathbb{X}\vert\, 
f(\alpha)=0\,\,  
\forall f\in I\, \mbox{ homogeneous} \right\}.
$$ 
In particular, if $f\in S$ is homogeneous, the zero set
$V_\mathbb{X}(f)$ of $f$ is the set of all $[\alpha]\in \mathbb{X}$
such that $f(\alpha)=0$, that is $V_\mathbb{X}(f)$ is the set of zeros
of $f$ in $\mathbb{X}$. 

\begin{lemma}\label{primdec-ix-a} Let $\mathbb{X}$ be a finite
subset of $\mathbb{P}^{s-1}$, let $[\alpha]$ be a point in
$\mathbb{X}$  
with $\alpha=(\alpha_1,\ldots,\alpha_s)$
and $\alpha_k\neq 0$ for some $k$, and let
$I_{[\alpha]}$ be the vanishing ideal of $[\alpha]$. Then $I_{[\alpha]}$ is a prime ideal, 
\begin{equation*}
I_{[\alpha]}=(\{\alpha_kt_i-\alpha_it_k\vert\, k\neq i\in\{1,\ldots,s\}),\
\deg(S/I_{[\alpha]})=1,\,  
\end{equation*}
${\rm ht}(I_{[\alpha]})=s-1$, 
and $I(\mathbb{X})=\bigcap_{[\beta]\in{\mathbb{X}}}I_{[\beta]}$ is the primary
decomposition of $I(\mathbb{X})$. 
\end{lemma}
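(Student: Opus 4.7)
The plan is to determine $I_{[\alpha]}$ explicitly by exhibiting an isomorphism $S/I_{[\alpha]}\cong K[t_k]$, from which the primality, height, and degree assertions fall out at once; the primary decomposition of $I(\mathbb{X})$ then follows formally. Without loss of generality I will use the index $k$ with $\alpha_k\neq 0$ given in the hypothesis, and let $J$ denote the ideal generated by the $s-1$ linear forms $\{\alpha_kt_i-\alpha_it_k : i\neq k\}$, which are the candidate generators of $I_{[\alpha]}$.

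First I would define the graded $K$-algebra homomorphism $\phi\colon S\to K[t_k]$ by $\phi(t_k)=t_k$ and $\phi(t_i)=(\alpha_i/\alpha_k)\,t_k$ for $i\neq k$. Each generator of $J$ is killed by $\phi$, so $J\subseteq\ker\phi$. Conversely, given any $f\in S$, the relations $\alpha_kt_i\equiv\alpha_it_k\pmod{J}$ allow one to rewrite $f$ modulo $J$ as a polynomial $g(t_k)\in K[t_k]$; if $f\in\ker\phi$, then $g(t_k)=\phi(g(t_k))=\phi(f)=0$, so $f\in J$. Hence $\phi$ descends to an isomorphism $S/J\cong K[t_k]$ of graded $K$-algebras. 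Since $K[t_k]$ is a domain of Krull dimension $1$ with Hilbert function identically $1$, I conclude that $J$ is prime, ${\rm ht}(J)=s-1$, and $\deg(S/J)=1$.

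Next I would show $J=I_{[\alpha]}$. The containment $J\subseteq I_{[\alpha]}$ is immediate since each generator vanishes at $[\alpha]$. For the reverse, $I_{[\alpha]}$ is a proper graded ideal containing the prime $J$ of height $s-1$, so if $J\subsetneq I_{[\alpha]}$ then $I_{[\alpha]}=\mathfrak{m}=(t_1,\ldots,t_s)$; but $\mathfrak{m}$ cannot be contained in the vanishing ideal of any nonzero point of $\mathbb{P}^{s-1}$ (for any $i$ with $\alpha_i\neq 0$, the form $t_i$ fails to vanish at $\alpha$), so $J=I_{[\alpha]}$.

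For the primary decomposition, a polynomial belongs to $I(\mathbb{X})$ iff each of its homogeneous components vanishes at every $[\beta]\in\mathbb{X}$, which is precisely the condition that it lie in every $I_{[\beta]}$; hence $I(\mathbb{X})=\bigcap_{[\beta]\in\mathbb{X}}I_{[\beta]}$. Each $I_{[\beta]}$ is prime (hence primary) of common height $s-1$, distinct points give distinct prime ideals, and irredundancy is obtained by exhibiting, for each $[\gamma]\in\mathbb{X}$, a homogeneous polynomial (for example a product of one separating linear form per other point) that lies in $\bigcap_{[\beta]\neq[\gamma]}I_{[\beta]}$ but not in $I_{[\gamma]}$. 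The only step with any real content is the verification $\ker\phi=J$, which reduces to the routine normal-form observation that the linear generators of $J$ let one replace every $t_i$ ($i\neq k$) by a scalar multiple of $t_k$ modulo $J$.
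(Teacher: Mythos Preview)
The paper does not give a proof of this lemma; it appears in the preliminaries section with the blanket remark that ``all results of this section are well-known.'' Your argument via the graded surjection $\phi\colon S\to K[t_k]$ is the standard one and is correct in outline.

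There is one imprecision worth flagging. From ``$I_{[\alpha]}$ is a proper graded ideal containing the height-$(s-1)$ prime $J$'' you cannot conclude that $J\subsetneq I_{[\alpha]}$ forces $I_{[\alpha]}=\mathfrak{m}$: for instance $J+(t_k^2)$ is proper, graded, strictly contains $J$, and is not $\mathfrak{m}$. The missing ingredient is that $I_{[\alpha]}$ is \emph{radical} (if $f$ is homogeneous with $f^n(\alpha)=0$ then $f(\alpha)=0$), and the only radical graded ideals of $S/J\cong K[t_k]$ are $(0)$ and $(t_k)$. Alternatively, and more directly, you can bypass this entirely: for homogeneous $f\in I_{[\alpha]}$ of degree $d$, your normal-form step gives $f\equiv c\,t_k^{d}\pmod{J}$ for some $c\in K$; evaluating at $\alpha$ yields $0=f(\alpha)=c\,\alpha_k^{d}$, hence $c=0$ and $f\in J$. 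Either fix closes the gap immediately.
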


\begin{definition}\label{projectivetorus-def} The set 
$\mathbb{T}=\{[(x_1,\ldots,x_s)]\in\mathbb{P}^{s-1}\vert\, x_i\in
K^*,\, \forall\, i\}$ is called a {\it projective
torus\/}.
\end{definition}

\section{Computing the number of points of a 
variety}\label{computing-zeros-of-varieties-section}
In this section we give a degree
formula to compute the number of solutions of a system of 
homogeneous polynomials over any given finite set of
points in a projective space over a field.

\begin{lemma}\label{vila-vivares-mar10-17} Let $\mathbb{X}$ be a finite subset of 
$\mathbb{P}^{s-1}$ over a field $K$. If $F=\{f_1,\ldots,f_r\}$ is a
set of homogeneous polynomials of $S\setminus\{0\}$, 
then $V_\mathbb{X}(F)=\emptyset$ if and only
if $(I(\mathbb{X})\colon(F))=I(\mathbb{X})$.
\end{lemma}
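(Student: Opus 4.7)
The plan is to prove both directions by exploiting the primary decomposition $I(\mathbb{X})=\bigcap_{[\beta]\in\mathbb{X}}I_{[\beta]}$ from Lemma~\ref{primdec-ix-a}, where each $I_{[\beta]}$ is prime. The inclusion $(I(\mathbb{X})\colon(F))\supseteq I(\mathbb{X})$ is automatic, so in each direction only the reverse containment (or its failure) needs attention.

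For the forward direction, assume $V_{\mathbb{X}}(F)=\emptyset$ and take any $g\in(I(\mathbb{X})\colon(F))$. For every $[\beta]\in\mathbb{X}$, since $[\beta]\notin V_{\mathbb{X}}(F)$, there exists some $f_i$ with $f_i(\beta)\neq 0$, i.e., $f_i\notin I_{[\beta]}$. From $gf_i\in I(\mathbb{X})\subseteq I_{[\beta]}$ and the primality of $I_{[\beta]}$, I conclude $g\in I_{[\beta]}$. Since this holds for every point of $\mathbb{X}$, we get $g\in\bigcap_{[\beta]\in\mathbb{X}}I_{[\beta]}=I(\mathbb{X})$, giving equality.

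For the converse, I argue contrapositively: assume $V_{\mathbb{X}}(F)\neq\emptyset$ and pick $[\beta]\in V_{\mathbb{X}}(F)$, so $f_i\in I_{[\beta]}$ for all $i$. The key observation is that the primary decomposition in Lemma~\ref{primdec-ix-a} is irredundant (the primes $I_{[\gamma]}$ are pairwise distinct), so $J:=\bigcap_{[\gamma]\neq[\beta]}I_{[\gamma]}\not\subseteq I_{[\beta]}$; otherwise the intersection over all points of $\mathbb{X}\setminus\{[\beta]\}$ would already equal $I(\mathbb{X})$, contradicting irredundancy. Choose $g\in J\setminus I_{[\beta]}$. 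Then for each $i$, $gf_i\in I_{[\gamma]}$ for every $[\gamma]\neq[\beta]$ (because $g\in I_{[\gamma]}$) and $gf_i\in I_{[\beta]}$ (because $f_i\in I_{[\beta]}$); hence $gf_i\in I(\mathbb{X})$, so $g\in(I(\mathbb{X})\colon(F))$, while $g\notin I(\mathbb{X})$ since $g\notin I_{[\beta]}$. Thus $(I(\mathbb{X})\colon(F))\neq I(\mathbb{X})$.

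The only delicate step is the irredundancy argument in the converse direction; all else is a direct application of primality. Because Lemma~\ref{primdec-ix-a} explicitly asserts that $I(\mathbb{X})=\bigcap_{[\beta]\in\mathbb{X}}I_{[\beta]}$ is the primary decomposition with distinct prime components indexed by the points of $\mathbb{X}$, irredundancy is immediate, and no further obstacle remains.
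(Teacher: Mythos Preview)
Your proof is correct and follows essentially the same approach as the paper's: both directions rest on the primary decomposition $I(\mathbb{X})=\bigcap_{[\beta]\in\mathbb{X}}I_{[\beta]}$ from Lemma~\ref{primdec-ix-a} and the primality of the components. The only minor difference is in the converse: the paper assumes $(I(\mathbb{X})\colon(F))=I(\mathbb{X})$, uses the identity $(I(\mathbb{X})\colon(F))=\bigcap_i(\mathfrak{p}_i\colon(F))$, and reaches a contradiction via prime avoidance, whereas you directly exhibit a witness $g\in\bigl(\bigcap_{[\gamma]\neq[\beta]}I_{[\gamma]}\bigr)\setminus I_{[\beta]}$ from irredundancy; the two arguments are equivalent in content and length.
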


\begin{proof} $\Rightarrow$) We proceed by contradiction assuming 
that $I(\mathbb{X})\subsetneq(I(\mathbb{X})\colon(F))$. Pick a homogeneous polynomial
$g$ such that $gf_i\in I(\mathbb{X})$ for all $i$ and $g\notin I(\mathbb{X})$. 
Then there is $[\alpha]$ in $\mathbb{\mathbb{X}}$ such that $g(\alpha)\neq 0$. 
Thus $f_i(\alpha)=0$ for all $i$, that is, $[\alpha]\in
V_\mathbb{\mathbb{X}}(F)$, a contradiction.

$\Leftarrow$) We can write $\mathbb{X}=\{[P_1],\ldots,[P_m]\}$ and
$I(\mathbb{X})=\cap_{i=1}^m\mathfrak{p}_i$, where $\mathfrak{p}_i$ is
equal to $I_{[P_i]}$, the vanishing ideal of $[P_i]$. We proceed by
contradiction assuming that $V_\mathbb{X}(F)\neq\emptyset$. Pick
$[P_i]$ in $V_\mathbb{X}(F)$. For simplicity of notation assume that
$i=1$. Notice that $(\mathfrak{p}_1\colon(F))=(1)$. Therefore
\[
\bigcap_{i=1}^m\mathfrak{p}_i=I(\mathbb{X})=(I(\mathbb{X})\colon(F))=
\bigcap_{i=1}^m(\mathfrak{p}_i\colon(F))
=\bigcap_{i=2}^m(\mathfrak{p}_i\colon(F))\subset\mathfrak{p}_1.
\]
Hence $\mathfrak{p}_i\subset(\mathfrak{p}_i\colon(F))\subset\mathfrak{p}_1$ 
for some $i\geq 2$, see \cite[p.~74]{monalg-rev}. 
Thus $\mathfrak{p}_i=\mathfrak{p}_1$, a
contradiction.
\end{proof} 

An ideal $I\subset S$ is called {\it unmixed\/} 
if all its associated primes have the same height, and $I$ is called
{\it radical\/} if $I$ is equal to its radical. The radical of $I$ is
denoted by ${\rm rad}(I)$.

\begin{lemma}\label{degree-formula-for-the-number-of-non-zeros}
Let $\mathbb{X}$ be a finite subset of 
$\mathbb{P}^{s-1}$ over a field $K$ and let $I(\mathbb{X})\subset S$ be its
vanishing ideal. If 
$F=\{f_1,\ldots,f_r\}$ is a set of homogeneous polynomials of
$S\setminus\{0\}$, then 
$$
|\mathbb{X}\setminus V_{\mathbb{X}}(F)|=\left\{
\begin{array}{cl}
\deg(S/(I(\mathbb{X})\colon(F)))&\mbox{if }\, (I(\mathbb{X})\colon(F))\neq
I(\mathbb{X}),\\ 
\deg(S/I(\mathbb{X}))&\mbox{if }\, (I(\mathbb{X})\colon (F))=I(\mathbb{X}).
\end{array}
\right.
$$
\end{lemma}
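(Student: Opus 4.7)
The plan is to use the primary decomposition of $I(\mathbb{X})$ supplied by Lemma~\ref{primdec-ix-a} and reduce everything to a computation on each component followed by additivity of degree (Proposition~\ref{additivity-of-the-degree}). Write $\mathbb{X}=\{[P_1],\ldots,[P_m]\}$ and $I(\mathbb{X})=\bigcap_{i=1}^m\mathfrak{p}_i$ with $\mathfrak{p}_i=I_{[P_i]}$ a prime ideal of height $s-1$ and degree $1$. Using that colon ideals distribute over intersections, one has
$$
(I(\mathbb{X})\colon(F))=\bigcap_{i=1}^m(\mathfrak{p}_i\colon(F)).
$$

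The key step is the pointwise analysis of $(\mathfrak{p}_i\colon(F))$. If $[P_i]\in V_\mathbb{X}(F)$, then every $f_j$ vanishes at $[P_i]$, so $f_j\in\mathfrak{p}_i$ for all $j$ and $(\mathfrak{p}_i\colon(F))=S$; such factors can be dropped from the intersection. If $[P_i]\notin V_\mathbb{X}(F)$, then some $f_j$ lies outside the prime $\mathfrak{p}_i$, and for any $g\in(\mathfrak{p}_i\colon(F))$ the relation $gf_j\in\mathfrak{p}_i$ forces $g\in\mathfrak{p}_i$ by primality; the reverse inclusion is trivial, giving $(\mathfrak{p}_i\colon(F))=\mathfrak{p}_i$. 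Collecting these observations,
$$
(I(\mathbb{X})\colon(F))=\bigcap_{[P_i]\notin V_\mathbb{X}(F)}\mathfrak{p}_i,
$$
which, when the index set is non-empty, is an unmixed radical ideal of height $s-1$ with associated primes exactly $\{\mathfrak{p}_i : [P_i]\notin V_\mathbb{X}(F)\}$, hence an irredundant primary decomposition.

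Now if $(I(\mathbb{X})\colon(F))\neq I(\mathbb{X})$, then by Lemma~\ref{vila-vivares-mar10-17} we have $V_\mathbb{X}(F)\neq\emptyset$, so the index set on the right is a proper subset of $\{1,\ldots,m\}$ but may be empty (in which case the intersection is $S$ and $\deg(S/S)=0=|\mathbb{X}\setminus V_\mathbb{X}(F)|$, matching). Otherwise, applying Proposition~\ref{additivity-of-the-degree} and $\deg(S/\mathfrak{p}_i)=1$ gives
$$
\deg(S/(I(\mathbb{X})\colon(F)))=\sum_{[P_i]\notin V_\mathbb{X}(F)}\deg(S/\mathfrak{p}_i)=|\mathbb{X}\setminus V_\mathbb{X}(F)|,
$$
as required. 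In the remaining case $(I(\mathbb{X})\colon(F))=I(\mathbb{X})$, Lemma~\ref{vila-vivares-mar10-17} yields $V_\mathbb{X}(F)=\emptyset$, so $|\mathbb{X}\setminus V_\mathbb{X}(F)|=|\mathbb{X}|=\deg(S/I(\mathbb{X}))$, again by additivity of the degree applied to $I(\mathbb{X})$ itself.

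I do not expect any serious obstacle: the only delicate point is book-keeping the degenerate cases (when $V_\mathbb{X}(F)=\mathbb{X}$ so the colon ideal becomes the unit ideal, and when $V_\mathbb{X}(F)=\emptyset$) so that the two branches of the piecewise formula are consistent with the convention $\deg(S/S)=0$ and with the equivalence in Lemma~\ref{vila-vivares-mar10-17}.
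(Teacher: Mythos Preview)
Your proof is correct and follows essentially the same route as the paper: primary decomposition of $I(\mathbb{X})$ via Lemma~\ref{primdec-ix-a}, reduction of the colon ideal to an intersection of the $\mathfrak{p}_i$ indexed by the non-zeros of $F$, and additivity of degree. The only cosmetic difference is that the paper splits $(I\colon(F))=\bigcap_j(I\colon f_j)$ over the $f_j$'s first and then over the primes, whereas you go directly to $\bigcap_i(\mathfrak{p}_i\colon(F))$; both land on $\bigcap_{[P_i]\notin V_\mathbb{X}(F)}\mathfrak{p}_i$. Your explicit handling of the degenerate case $V_\mathbb{X}(F)=\mathbb{X}$ (colon ideal equal to $S$) is in fact a bit more careful than the paper's treatment.
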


\begin{proof} Let $[P_1],\ldots,[P_m]$ be the points of $\mathbb{X}$
with $m=|\mathbb{X}|$, and let $[P]$ be a point in
$\mathbb{X}$ with $P=(\alpha_1,\ldots,\alpha_s)$
and $\alpha_k\neq 0$ for some $k$. Then the vanishing ideal $I_{[P]}$
of $[P]$ is a prime ideal of height $s-1$, 
\begin{equation*}
I_{[P]}=(\{\alpha_kt_i-\alpha_it_k\vert\, k\neq i\in\{1,\ldots,s\}),\
\deg(S/I_{[P]})=1,
\end{equation*}
and $I(\mathbb{X})=\bigcap_{i=1}^mI_{[P_i]}$ is a primary
decomposition (see Lemma~\ref{primdec-ix-a}). 

Assume that $(I(\mathbb{X})\colon (F))\neq I(\mathbb{X})$. We set
$I=I(\mathbb{X})$ and $\mathfrak{p}_i=I_{[P_i]}$ for $i=1,\ldots,m$. 
Notice that $(\mathfrak{p}_j\colon f_i)=(1)$ if and only if
$f_i\in\mathfrak{p}_j$ if and only if $f_i(P_j)=0$. Then
\begin{equation*}
(I\colon(F))=\bigcap_{i=1}^r(I\colon
f_i)=\left(\bigcap_{f_1(P_j)\neq 0}\mathfrak{p}_j\right)\bigcap
\cdots\bigcap \left(\bigcap_{f_r(P_j)\neq 0}\mathfrak{p}_j\right)
=\bigcap_{[P_j]\notin V_\mathbb{X}(F)}\mathfrak{p}_j.
\end{equation*}

Therefore,  by the additivity of the degree of
Proposition~\ref{additivity-of-the-degree}, we get that 
$\deg(S/(I\colon(F)))$ is equal to $|\mathbb{X}\setminus
V_\mathbb{X}(F)|$. If $(I(\mathbb{X})\colon(F))=I(\mathbb{X})$, then 
 $V_\mathbb{X}(F)=\emptyset$ (see Lemma~\ref{vila-vivares-mar10-17}).
 Thus $|V_\mathbb{X}(F)|=0$ and the required formula follows 
because $|\mathbb{X}|=\deg(S/I(\mathbb{X}))$. 
\end{proof}

\begin{lemma}\label{jul11-15} Let $I\subset S$ be a radical unmixed graded ideal. 
If $F=\{f_1,\ldots,f_r\}$ is a set of homogeneous polynomials of
$S\setminus\{0\}$, $(I\colon (F))\neq I$, and $\mathcal{A}$ is
the set of all associated primes
of $S/I$ that contain $F$, then ${\rm ht}(I)={\rm ht}(I,F)$,
$\mathcal{A}\neq\emptyset$ and 
$$
\deg(S/(I,F))=\sum_{\mathfrak{p}\in\mathcal{A}}\deg(S/\mathfrak{p}).
$$
\end{lemma}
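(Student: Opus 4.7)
The plan is to apply the additivity of degree (Proposition~\ref{additivity-of-the-degree}) to a primary decomposition of $(I,F)$, after identifying the minimal primes of $(I,F)$ of height ${\rm ht}(I)$ with $\mathcal{A}$ and checking that each corresponding primary component collapses to the prime itself.

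First, I would unpack the hypothesis $(I\colon(F))\neq I$. Since $I$ is radical and unmixed, every associated prime of $S/I$ is minimal and $I=\bigcap_{\mathfrak{p}\in{\rm Ass}(S/I)}\mathfrak{p}$ is an irredundant primary decomposition. A direct computation using this decomposition gives $(I\colon f_i)=\bigcap_{\mathfrak{p}\in{\rm Ass}(S/I),\, f_i\notin\mathfrak{p}}\mathfrak{p}$, so
$$(I\colon(F))=\bigcap_{i=1}^r(I\colon f_i)=\bigcap_{\mathfrak{p}\in{\rm Ass}(S/I),\, F\not\subset\mathfrak{p}}\mathfrak{p}.$$
Because the full intersection is irredundant, $(I\colon(F))\neq I$ forces the index set on the right to omit at least one prime, which is precisely $\mathcal{A}\neq\emptyset$. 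The height equality is then immediate: any $\mathfrak{p}\in\mathcal{A}$ contains $(I,F)$ and satisfies ${\rm ht}(\mathfrak{p})={\rm ht}(I)$, giving ${\rm ht}(I,F)\leq{\rm ht}(I)$, while the reverse inequality is automatic from $I\subset(I,F)$.

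Next, I would locate the minimal primes of $(I,F)$ of height ${\rm ht}(I)$. Any minimal prime $\mathfrak{q}$ of $(I,F)$ contains $I$, hence contains some $\mathfrak{p}\in{\rm Ass}(S/I)$; if ${\rm ht}(\mathfrak{q})={\rm ht}(I)={\rm ht}(\mathfrak{p})$ then $\mathfrak{q}=\mathfrak{p}$, and since $F\subset\mathfrak{q}$ we have $\mathfrak{q}\in\mathcal{A}$. Conversely each $\mathfrak{p}\in\mathcal{A}$ is a minimal prime of $(I,F)$ of height ${\rm ht}(I)$. To apply additivity I still must show that the $\mathfrak{p}$-primary component of $(I,F)$ is $\mathfrak{p}$ itself for each $\mathfrak{p}\in\mathcal{A}$: localizing, $IS_\mathfrak{p}=\mathfrak{p}S_\mathfrak{p}$ because $\mathfrak{p}$ is a minimal prime of the radical ideal $I$, so $(I,F)S_\mathfrak{p}=\mathfrak{p}S_\mathfrak{p}$, whose contraction back to $S$ is $\mathfrak{p}$. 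Proposition~\ref{additivity-of-the-degree} then yields
$$\deg(S/(I,F))=\sum_{\mathfrak{p}\in\mathcal{A}}\deg(S/\mathfrak{p}).$$

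The main subtlety lies in the last step: the additivity statement is phrased in terms of primary components $\mathfrak{q}_i$ rather than their radicals, so one really does need the $\mathfrak{p}$-primary component to equal $\mathfrak{p}$ itself, not merely to have radical $\mathfrak{p}$. This is exactly where radicality and unmixedness of $I$ are used; without radicality, $IS_\mathfrak{p}$ need not equal $\mathfrak{p}S_\mathfrak{p}$, and the primary component could contribute higher multiplicity than $\deg(S/\mathfrak{p})$. Everything else is bookkeeping with primary decompositions and the structure of ${\rm Ass}(S/I)$.
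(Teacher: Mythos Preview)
Your proof is correct and follows essentially the same route as the paper's: identify $\mathcal{A}$ with the minimal primes of $(I,F)$ of height ${\rm ht}(I)$, show via localization that the corresponding primary components of $(I,F)$ coincide with the primes themselves, and apply additivity of degree. The only cosmetic difference is that you compute $(I\colon(F))$ explicitly as an intersection of primes to deduce $\mathcal{A}\neq\emptyset$, whereas the paper invokes the zero-divisor characterization (Theorem~\ref{zero-divisors}); the localization argument and the use of Proposition~\ref{additivity-of-the-degree} are the same.
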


\begin{proof} As $I\subsetneq (I\colon (F))$, there is $g\in
S\setminus{I}$ such that $g(F)\subset I$. Hence the ideal 
$(F)$ is contained in the set of zero-divisors of $S/I$. Thus, by
Theorem~\ref{zero-divisors} and since $I$ is unmixed, $(F)$ is contained in an associated 
prime ideal $\mathfrak{p}$ of $S/I$ of
height ${\rm ht}(I)$. Thus $I\subset(I,F)\subset\mathfrak{p}$, and
consequently  ${\rm ht}(I)={\rm ht}(I,F)$. Therefore the set of
associated primes of $(I,F)$ of height equal to ${\rm ht}(I)$ is not empty and is equal to 
 $\mathcal{A}$. There is an irredundant primary decomposition
\begin{equation}\label{jul10-15}
(I,F)=\mathfrak{q}_1\cap\cdots\cap\mathfrak{q}_n\cap\mathfrak{q}_{n+1}'\cap
\cdots\cap\mathfrak{q}_t',
\end{equation}
where ${\rm rad}(\mathfrak{q}_i)=\mathfrak{p_i}$,
$\mathcal{A}=\{\mathfrak{p}_1,\ldots,\mathfrak{p}_n\}$, and ${\rm
ht}(\mathfrak{q}_i')>{\rm ht}(I)$ for $i>n$. We may assume that the
associated primes of $S/I$ are
$\mathfrak{p}_1,\ldots,\mathfrak{p}_m$ with $n\leq m$. Since
$I$ is a radical ideal, we get that $I=\cap_{i=1}^m\mathfrak{p}_i$.
Next we show the following equality:
\begin{equation}\label{jul10-15-1}
\mathfrak{p}_1\cap\cdots\cap\mathfrak{p}_m=
\mathfrak{q}_1\cap\cdots\cap\mathfrak{q}_n\cap\mathfrak{q}_{n+1}'\cap\cdots\cap\mathfrak{q}_t'
\cap\mathfrak{p}_{n+1}\cap\cdots\cap\mathfrak{p}_m.
\end{equation}
The inclusion ``$\supset$'' is clear because
$\mathfrak{q}_i\subset\mathfrak{p}_i$ for $i=1,\ldots,n$. The 
inclusion ``$\subset$'' follows by noticing that the right hand side of
Eq.~(\ref{jul10-15-1}) is equal to 
$(I,f)\cap\mathfrak{p}_{n+1}\cap\cdots\cap\mathfrak{p}_m$, and 
consequently it contains $I=\cap_{i=1}^m\mathfrak{p}_i$. Notice that
${{\rm rad}}(\mathfrak{q}_j')=\mathfrak{p}_j'\not\subset\mathfrak{p}_i$ for
all $i,j$ and $\mathfrak{p}_j\not\subset\mathfrak{p}_i$ for $i\neq j$.
Hence localizing Eq.~(\ref{jul10-15-1}) 
at the prime ideal $\mathfrak{p}_i$ for $i=1,\ldots,n$, 
we get that $\mathfrak{p}_i=I_{\mathfrak{p}_i}\cap
S=(\mathfrak{q}_i)_{\mathfrak{p}_i}\cap S=\mathfrak{q}_i$ for
$i=1,\ldots,n$. Using Eq.~(\ref{jul10-15}), together with the additivity of the
degree of Proposition~\ref{additivity-of-the-degree}, the required equality follows.
\end{proof}

\begin{lemma}\label{degree-formula-for-the-number-of-zeros-proj}
Let $\mathbb{X}$ be a finite subset of 
$\mathbb{P}^{s-1}$ over a field $K$ and let $I(\mathbb{X})\subset S$ be its
vanishing ideal. If 
$F=\{f_1,\ldots,f_r\}$ is a set of homogeneous polynomials of
$S\setminus\{0\}$, then the number of points of $V_\mathbb{X}(F)$ is given by 
$$
|V_{\mathbb{X}}(F)|=\left\{
\begin{array}{cl}
\deg(S/(I(\mathbb{X}),F))&\mbox{if }\, (I(\mathbb{X})\colon(F))\neq
I(\mathbb{X}),\\ 
0&\mbox{if }\, (I(\mathbb{X})\colon(F))=I(\mathbb{X}).
\end{array}
\right.
$$
\end{lemma}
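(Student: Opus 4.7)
The plan is to treat the two cases of the definition separately, reducing each one to a result already established in this section.

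First I would handle the easy case $(I(\mathbb{X})\colon(F))=I(\mathbb{X})$. By Lemma~\ref{vila-vivares-mar10-17} this forces $V_\mathbb{X}(F)=\emptyset$, so $|V_\mathbb{X}(F)|=0$ and there is nothing more to prove.

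For the main case $(I(\mathbb{X})\colon(F))\neq I(\mathbb{X})$, the strategy is to apply Lemma~\ref{jul11-15} to $I=I(\mathbb{X})$. To do so I first verify its hypotheses: by Lemma~\ref{primdec-ix-a} one has the irredundant primary decomposition $I(\mathbb{X})=\bigcap_{i=1}^m I_{[P_i]}$, where each $I_{[P_i]}$ is a prime ideal of height $s-1$ with $\deg(S/I_{[P_i]})=1$. Hence $I(\mathbb{X})$ is a radical unmixed graded ideal and its associated primes are precisely the $I_{[P_i]}$. Lemma~\ref{jul11-15} then yields
\[
\deg(S/(I(\mathbb{X}),F))=\sum_{\mathfrak{p}\in\mathcal{A}}\deg(S/\mathfrak{p}),
\]
where $\mathcal{A}$ is the set of associated primes of $S/I(\mathbb{X})$ containing $F$.

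The final step is to identify $\mathcal{A}$ combinatorially. For each $i$, the prime $I_{[P_i]}$ contains $F$ if and only if $f_j(P_i)=0$ for every $j$, which is precisely the condition $[P_i]\in V_\mathbb{X}(F)$. Since every term $\deg(S/I_{[P_i]})$ equals $1$, the sum above collapses to $|\{i\colon [P_i]\in V_\mathbb{X}(F)\}|=|V_\mathbb{X}(F)|$, giving the desired formula.

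No step presents a genuine obstacle: this lemma is essentially a packaging of Lemmas~\ref{vila-vivares-mar10-17}, \ref{primdec-ix-a}, and \ref{jul11-15}. The only small point to be careful about is ensuring $\mathcal{A}\neq\emptyset$ in the second case, but this is guaranteed by Lemma~\ref{jul11-15} itself under the hypothesis $(I(\mathbb{X})\colon(F))\neq I(\mathbb{X})$, so that $V_\mathbb{X}(F)\neq\emptyset$ and the sum is nontrivial.
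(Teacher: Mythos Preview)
Your proposal is correct and follows essentially the same route as the paper: both proofs dispose of the case $(I(\mathbb{X})\colon(F))=I(\mathbb{X})$ via Lemma~\ref{vila-vivares-mar10-17}, and in the remaining case both invoke the primary decomposition from Lemma~\ref{primdec-ix-a} together with Lemma~\ref{jul11-15}, then identify the set $\mathcal{A}$ of associated primes containing $F$ with the points of $V_\mathbb{X}(F)$ and use $\deg(S/I_{[P_i]})=1$ to collapse the sum.
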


\begin{proof} Let $[P_1],\ldots,[P_m]$ be the points of $\mathbb{X}$
with $m=|\mathbb{X}|$. The vanishing ideal $I_{[P_i]}$ of $[P_i]$ is a
 prime ideal of height $s-1$, $\deg(S/I_{[P_i]})=1$, and
 $I(\mathbb{X})=\bigcap_{i=1}^mI_{[P_i]}$ (see Lemma~\ref{primdec-ix-a}). 

Assume that $(I(\mathbb{X})\colon(F))\neq I(\mathbb{X})$. Let
$\mathcal{A}$ be the set of all $I_{[P_i]}$ that contain the
set $F$. Notice that $f_j\in I_{[P_i]}$ if and only if $f_j(P_i)=0$. Then 
$[P_i]$ is in $V_\mathbb{X}(F)$ if and only if 
$F\subset I_{[P_i]}$. Thus $[P_i]$ is in $V_\mathbb{X}(F)$ if and only
if $I_{[P_i]}$ is in $\mathcal{A}$. Hence, 
by Lemma~\ref{jul11-15}, we get 
$$
|V_\mathbb{X}(F)|=\sum_{[P_i]\in
V_\mathbb{X}(F)}\deg(S/I_{[P_i]})=\sum_{F\subset I_{[P_i]}}\deg(
S/I_{[P_i]})=\deg(S/(I(\mathbb{X}),F)).
$$

Assume that $(I(\mathbb{X})\colon F)=I(\mathbb{X})$. Then, by 
Lemma~\ref{vila-vivares-mar10-17}, $V_\mathbb{X}(f)=\emptyset$ and
$|V_\mathbb{X}(f)|=0$.
\end{proof}

\begin{proposition} If $\mathbb{X}$ is a finite subset of
$\mathbb{P}^{s-1}$, then 
$$ 
\deg(S/I(\mathbb{X}))=\deg(S/(I(\mathbb{X}),F))+\deg(S/(I(\mathbb{X})\colon(F))).
$$
\end{proposition}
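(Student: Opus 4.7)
The plan is to deduce the identity from Lemmas~\ref{degree-formula-for-the-number-of-non-zeros} and \ref{degree-formula-for-the-number-of-zeros-proj} by the obvious set-theoretic partition of $\mathbb{X}$. First I would record the baseline equality $|\mathbb{X}|=\deg(S/I(\mathbb{X}))$: by Lemma~\ref{primdec-ix-a}, $I(\mathbb{X})=\bigcap_{i=1}^m I_{[P_i]}$ is the irredundant primary decomposition of $I(\mathbb{X})$, each $I_{[P_i]}$ is prime of height $s-1$ with $\deg(S/I_{[P_i]})=1$, and Proposition~\ref{additivity-of-the-degree} (additivity of degree) immediately gives $\deg(S/I(\mathbb{X}))=m=|\mathbb{X}|$.

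Next I would split into two cases according to whether $(I(\mathbb{X})\colon(F))=I(\mathbb{X})$.

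In the main case $(I(\mathbb{X})\colon(F))\neq I(\mathbb{X})$, Lemma~\ref{degree-formula-for-the-number-of-non-zeros} says
\[
|\mathbb{X}\setminus V_{\mathbb{X}}(F)|=\deg(S/(I(\mathbb{X})\colon(F))),
\]
while Lemma~\ref{degree-formula-for-the-number-of-zeros-proj} says
\[
|V_{\mathbb{X}}(F)|=\deg(S/(I(\mathbb{X}),F)).
\]
Adding these two identities and using $|\mathbb{X}|=|\mathbb{X}\setminus V_\mathbb{X}(F)|+|V_\mathbb{X}(F)|$ together with the baseline $|\mathbb{X}|=\deg(S/I(\mathbb{X}))$ gives the proposition in a single line. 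This is the direct, main argument.

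The remaining case $(I(\mathbb{X})\colon(F))=I(\mathbb{X})$ is the only place requiring caution. Here Lemma~\ref{vila-vivares-mar10-17} gives $V_\mathbb{X}(F)=\emptyset$, so the second summand on the right coincides with $\deg(S/I(\mathbb{X}))$ by definition, and the formula forces $\deg(S/(I(\mathbb{X}),F))=0$. To see this, observe that $V_\mathbb{X}(F)=\emptyset$ means no $I_{[P_i]}$ contains $F$, and hence $(I(\mathbb{X}),F)$ is not contained in any associated prime of $I(\mathbb{X})$; consequently ${\rm ht}(I(\mathbb{X}),F)>s-1={\rm ht}(I(\mathbb{X}))$, so no primary component of $(I(\mathbb{X}),F)$ has height equal to ${\rm ht}(I(\mathbb{X}))$. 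With the additivity convention of Proposition~\ref{additivity-of-the-degree} the degree contribution in the relevant codimension is zero, which closes this case. The principal obstacle is precisely this degenerate case: the non-degenerate case is an immediate consequence of the two preceding lemmas, while ensuring the degree convention matches Lemma~\ref{degree-formula-for-the-number-of-zeros-proj}'s output of $0$ is what makes the combined statement uniform.
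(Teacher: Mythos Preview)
Your non-degenerate case is exactly the paper's proof: the paper writes only ``It follows from Lemmas~\ref{degree-formula-for-the-number-of-non-zeros} and \ref{degree-formula-for-the-number-of-zeros-proj},'' and you have spelled out precisely that argument.

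Your treatment of the degenerate case $(I(\mathbb{X})\colon(F))=I(\mathbb{X})$, however, has a gap. You correctly deduce that ${\rm ht}(I(\mathbb{X}),F)>s-1$, so $\dim S/(I(\mathbb{X}),F)=0$. But then the paper's definition of degree gives $\deg(S/(I(\mathbb{X}),F))=\dim_K S/(I(\mathbb{X}),F)$, which is a \emph{positive} integer, since $(I(\mathbb{X}),F)$ is a proper homogeneous ideal generated in positive degrees. Proposition~\ref{additivity-of-the-degree} does not supply a ``degree in codimension $s-1$'' that would vanish here; it computes degree using the primary components of the minimal height of the ideal at hand, which for $(I(\mathbb{X}),F)$ is $s$, not $s-1$. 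So the identity literally fails in this case. The paper's one-line proof does not address this either; the issue is really with the statement, which should carry the hypothesis $(I(\mathbb{X})\colon(F))\neq I(\mathbb{X})$ (equivalently $V_\mathbb{X}(F)\neq\emptyset$) already present in the surrounding lemmas.
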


\begin{proof} It follows from Lemmas~\ref{degree-formula-for-the-number-of-non-zeros} and
\ref{degree-formula-for-the-number-of-zeros-proj}.
\end{proof}

\section{Generalized minimum distance function of a graded
ideal}\label{mdf-section}

In this part we study the generalized minimum distance function of a graded
ideal. To
avoid repetitions, we continue to employ 
the notations and definitions used in Sections~\ref{intro-section} and
\ref{prelim-section}. 

\begin{lemma}\label{degree-initial-footprint} Let $I\subset S$ be an
unmixed graded ideal and let $\prec$ be 
a monomial order. If $F$ is a finite set of homogeneous polynomials of
$S$ and $(I\colon(F))\neq I$, then
$$
\deg(S/(I,F))\leq\deg(S/({\rm
in}_\prec(I),{\rm in}_\prec(F)))\leq\deg(S/I),
$$
and $\deg(S/(I,F))<\deg(S/I)$ if $I$ is an unmixed radical ideal and
$(F)\not\subset I$.
\end{lemma}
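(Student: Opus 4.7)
The plan is to reduce both inequalities to the Macaulay--Gr\"obner basis fact $H_I(d) = H_{{\rm in}_\prec(I)}(d)$, using the chain of containments
\[
{\rm in}_\prec(I) \;\subseteq\; ({\rm in}_\prec(I), {\rm in}_\prec(F)) \;\subseteq\; {\rm in}_\prec((I,F)),
\]
the second of which follows from $I \subset (I,F)$ and $F \subset (I,F)$ by taking leading terms. Writing $J = (I,F)$ and $J' = ({\rm in}_\prec(I), {\rm in}_\prec(F))$, this chain yields
\[
H_I(d) = H_{{\rm in}_\prec(I)}(d) \;\geq\; H_{J'}(d) \;\geq\; H_{{\rm in}_\prec(J)}(d) = H_J(d)
\]
for every $d$, which is the input that will drive the degree comparison.

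To pass from Hilbert functions to degrees, I need $S/I$, $S/J'$ and $S/J$ to share the same Krull dimension. Mimicking the opening of the proof of Lemma~\ref{jul11-15}, the hypothesis $(I\colon(F)) \neq I$ places $F$ inside the set of zero-divisors of $S/I$; so by Theorem~\ref{zero-divisors}, prime avoidance, and the unmixedness of $I$, the ideal $(F)$ lies inside some associated prime of $S/I$ of height ${\rm ht}(I)$, giving ${\rm ht}(I,F) = {\rm ht}(I)$. Since equality of Hilbert functions gives ${\rm ht}({\rm in}_\prec(I)) = {\rm ht}(I)$ and ${\rm ht}({\rm in}_\prec(J)) = {\rm ht}(J) = {\rm ht}(I)$, the chain above sandwiches ${\rm ht}(J')$ between these two equal values, so all four ideals share the same height and hence the same Krull dimension. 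Once the dimensions agree, the Hilbert-function inequalities transfer to the leading coefficients of the Hilbert polynomials (and to the $K$-vector space dimensions when the Krull dimension is zero), delivering $\deg(S/J) \leq \deg(S/J') \leq \deg(S/I)$.

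For the strict inequality, assume further that $I$ is radical and $(F) \not\subset I$. The primary decomposition of $I$ is then $I = \bigcap_{\mathfrak{p} \in {\rm Ass}(S/I)} \mathfrak{p}$, so additivity of degree (Proposition~\ref{additivity-of-the-degree}) gives $\deg(S/I) = \sum_{\mathfrak{p}} \deg(S/\mathfrak{p})$. Applying Lemma~\ref{jul11-15} to $(I,F)$, the set $\mathcal{A} = \{\mathfrak{p} \in {\rm Ass}(S/I) : F \subset \mathfrak{p}\}$ is nonempty and $\deg(S/(I,F)) = \sum_{\mathfrak{p} \in \mathcal{A}} \deg(S/\mathfrak{p})$. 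Because $(F) \not\subset I = \bigcap \mathfrak{p}$, at least one associated prime fails to contain $F$, so $\mathcal{A}$ is a proper subset of ${\rm Ass}(S/I)$; the missing summands contribute strictly positive degree, forcing $\deg(S/(I,F)) < \deg(S/I)$.

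The main obstacle will be the height/dimension bookkeeping in the middle step: without the unmixedness hypothesis one cannot conclude ${\rm ht}(I,F) = {\rm ht}(I)$, and without this equality of Krull dimensions an inequality of Hilbert functions does not translate into an inequality of degrees at all.
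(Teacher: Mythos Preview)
Your proposal is correct and follows essentially the same approach as the paper's proof: both establish the Hilbert-function chain $H_J \leq H_{J'} \leq H_I$, use the unmixedness hypothesis together with Theorem~\ref{zero-divisors} to force all three quotients to share the same Krull dimension, and then invoke Lemma~\ref{jul11-15} plus additivity for the strict inequality. The only cosmetic differences are that the paper phrases the Hilbert-function chain via footprint containments $\Delta_\prec(J)\subset\Delta_\prec(L)\subset\Delta_\prec(I)$ (using Lemma~\ref{nov6-15} applied to a Gr\"obner basis of $I$ adjoined with $F$), whereas you use the equivalent dual chain ${\rm in}_\prec(I)\subset J'\subset{\rm in}_\prec(J)$ of initial ideals directly; and the paper sandwiches the height of $L$ between ${\rm ht}({\rm in}_\prec(I))$ and ${\rm ht}({\rm in}_\prec(\mathfrak{p}))$ for an associated prime $\mathfrak{p}\supset F$, while you sandwich it between ${\rm ht}({\rm in}_\prec(I))$ and ${\rm ht}({\rm in}_\prec(J))$---your version is arguably a shade cleaner since it reuses the already-established ideal chain.
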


\begin{proof} To simplify notation we set $J=(I,F)$, 
$L=({\rm in}_\prec(I),{\rm in}_\prec(F))$, and
$F=\{f_1,\ldots,f_r\}$. 
We denote the Krull
dimension of $S/I$ by $\dim(S/I)$. 
Recall that $\dim(S/I)=\dim(S)-{\rm ht}(I)$. First we show that $S/J$ and
$S/L$ have Krull dimension equal to $\dim(S/I)$. As
$I\subsetneq(I\colon F)$, all elements of $F$ are zero divisors of
$S/I$. Hence, as $I$ is unmixed, there is an
associated prime ideal $\mathfrak{p}$ of $S/I$ such that
$(F)\subset\mathfrak{p}$ and $\dim(S/I)=\dim(S/\mathfrak{p})$. Since 
$I\subset J\subset\mathfrak{p}$, we get that $\dim(S/J)$ is 
$\dim(S/I)$.  
Since $S/I$ and $S/{\rm
in}_\prec(I)$ have the same Hilbert function, and so does
$S/\mathfrak{p}$ and $S/{\rm in}_\prec(\mathfrak{p})$, we obtain
$$
\dim(S/{\rm in}_\prec(I))=\dim(S/I)=\dim(S/\mathfrak{p})=\dim(S/{\rm
in}_\prec(\mathfrak{p})).
$$
Hence, taking heights in the inclusions ${\rm in}_\prec(I)\subset L\subset{\rm
in}_\prec(\mathfrak{p})$, we obtain ${\rm ht}(I)={\rm ht}(L)$. 

Pick a Gr\"obner basis $\mathcal{G}=\{g_1,\ldots,g_r\}$ of $I$. Then
$J$ is generated by $\mathcal{G}\cup F$ and by Lemma~\ref{nov6-15}
one has the inclusions
\begin{eqnarray*}
& &\Delta_\prec(J)=\Delta_\prec(I,F)\subset\Delta_\prec({\rm in}_\prec(g_1),\ldots,{\rm
in}_\prec(g_r),{\rm in}_\prec(F))=\\
& &\ \ \ \ \  \Delta_\prec({\rm in}_\prec(I),{\rm
in}_\prec(F))=\Delta_\prec(L)\subset \Delta_\prec({\rm in}_\prec(g_1),\ldots,{\rm
in}_\prec(g_r))=\Delta_\prec(I).
\end{eqnarray*}
Thus $\Delta_\prec(J)\subset \Delta_\prec(L)\subset \Delta_\prec(I)$.
Recall that $H_I(d)$, the Hilbert function of $I$ at $d$, is the number of standard
monomials of degree $d$. Hence $H_J(d)\leq H_L(d)\leq H_I(d)$ for
$d\geq 0$. If $\dim(S/I)$ is equal to $0$, then 
$$
\deg(S/J)=\sum_{d\geq 0}H_J(d)\leq \deg(S/L)=\sum_{d\geq 0}H_L(d)\leq
\deg(S/I)=\sum_{d\geq 0}H_I(d).
$$

Assume now that $\dim(S/I)\geq 1$. 
By a theorem of Hilbert \cite[p.~58]{Sta1}, 
$H_J$, $H_L$, $H_I$ are polynomial functions of degree
equal to $k=\dim(S/I)-1$ (see \cite[Theorem~4.1.3]{BHer}). 
Thus 
$$k!\lim_{d\rightarrow\infty} H_J(d)/d^k\leq
k!\lim_{d\rightarrow\infty} H_L(d)/d^k\leq
k!\lim_{d\rightarrow\infty} H_I(d)/d^k,$$
that is, $\deg(S/J)\leq\deg(S/L)\leq\deg(S/I)$. 

If $I$ is an unmixed radical ideal and $(F)\not\subset I$, then there is at
least one minimal prime that does not contains $(F)$. Hence, by
Lemma~\ref{jul11-15}, it follows that $\deg(S/(I,F))<\deg(S/I)$.
\end{proof}

\begin{corollary}\label{poly-bounds-initial} Let $\mathbb{X}$ be a
finite subset of $\mathbb{P}^{s-1}$, let $I(\mathbb{X})\subset
S$ be its vanishing ideal, and let $\prec$ be a monomial order. 
If $F$ is a finite set of homogeneous polynomials of
$S$ and $(I(\mathbb{X})\colon(F))\neq I(\mathbb{X})$, then
$$
|V_{\mathbb{X}}(F)|=\deg(S/(I(\mathbb{X}),F))\leq\deg(S/({\rm
in}_\prec(I(\mathbb{X})),{\rm in}_\prec(F)))\leq\deg(S/I(\mathbb{X})),
$$
and $\deg(S/(I(\mathbb{X}),F))<\deg(S/I(\mathbb{X}))$ if
$(F)\not\subset I(\mathbb{X})$.
\end{corollary}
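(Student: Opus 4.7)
The plan is to assemble this corollary directly from the two main tools developed earlier in the paper: Lemma~\ref{degree-formula-for-the-number-of-zeros-proj}, which translates $|V_{\mathbb{X}}(F)|$ into the degree of $S/(I(\mathbb{X}),F)$, and Lemma~\ref{degree-initial-footprint}, which bounds such degrees in the initial ideal. Almost nothing new has to be verified; the only small gap I need to bridge is checking that $I(\mathbb{X})$ satisfies the hypotheses of Lemma~\ref{degree-initial-footprint}, namely that it is graded, radical, and unmixed.

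First I would invoke Lemma~\ref{primdec-ix-a}: since $\mathbb{X}=\{[P_1],\ldots,[P_m]\}$ is finite, one has $I(\mathbb{X})=\bigcap_{i=1}^m I_{[P_i]}$, where each $I_{[P_i]}$ is prime of height $s-1$. In particular $I(\mathbb{X})$ is a radical graded ideal whose associated primes all have the same height, so $I(\mathbb{X})$ is unmixed. The hypothesis $(I(\mathbb{X})\colon(F))\neq I(\mathbb{X})$ of the corollary is exactly the hypothesis needed to apply both lemmas.

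Next I would apply Lemma~\ref{degree-formula-for-the-number-of-zeros-proj} in its nontrivial case to obtain the equality $|V_{\mathbb{X}}(F)|=\deg(S/(I(\mathbb{X}),F))$. Then I would apply Lemma~\ref{degree-initial-footprint} with $I=I(\mathbb{X})$, which immediately yields the chain
\[
\deg(S/(I(\mathbb{X}),F))\leq\deg(S/({\rm in}_\prec(I(\mathbb{X})),{\rm in}_\prec(F)))\leq\deg(S/I(\mathbb{X})).
\]
Combining these two steps gives the displayed string of (in)equalities.

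Finally, for the strict inequality under the extra hypothesis $(F)\not\subset I(\mathbb{X})$: since $I(\mathbb{X})$ is a radical unmixed ideal, the last sentence of Lemma~\ref{degree-initial-footprint} applies verbatim and yields $\deg(S/(I(\mathbb{X}),F))<\deg(S/I(\mathbb{X}))$. There is no real obstacle here; the proof is purely an assembly step, and the only thing to be careful about is to explicitly cite Lemma~\ref{primdec-ix-a} as the reason $I(\mathbb{X})$ is radical and unmixed, so that the hypotheses of Lemma~\ref{degree-initial-footprint} are clearly met.
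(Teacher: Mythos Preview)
Your proposal is correct and follows exactly the paper's approach: the paper's proof is a one-line citation of Lemmas~\ref{degree-formula-for-the-number-of-zeros-proj} and \ref{degree-initial-footprint}, and your write-up simply unpacks this, with the helpful addition of invoking Lemma~\ref{primdec-ix-a} to justify that $I(\mathbb{X})$ is radical and unmixed.
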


\begin{proof} It follows from
Lemmas~\ref{degree-formula-for-the-number-of-zeros-proj} and \ref{degree-initial-footprint}. 
\end{proof}

\begin{lemma}\label{mar14-17} 
Let $\mathbb{X}=\{[P_1],\ldots,[P_m]\}$ be a finite subset of
$\mathbb{P}^{s-1}$ and let $D$ be a linear subspace of $C_\mathbb{X}(d)$ of 
dimension $r\geq 1$. The following hold.
\begin{itemize}
\item[(i)] There are 
$\overline{f}_1,\ldots,\overline{f}_r$ linearly independent elements
of $S_d/I_d$ such that $D=\oplus_{i=1}^rK\beta_i$, where 
$\beta_i$ is $(f_i(P_1),\ldots,f_i(P_m))$, and the support 
$\chi(D)$ of $D$ is equal to $\cup_{i=1}^r\chi(\beta_i)$.
\item[(ii)] $|\chi(D)|=|\mathbb{X}\setminus
V_\mathbb{X}(f_1,\ldots,f_r)|$. 
\item[(iii)] $\delta_r(C_\mathbb{X}(d))=\min\{|\mathbb{X}\setminus
V_\mathbb{X}(F)|:\, F=\{f_i\}_{i=1}^r\subset S_d,\,
\{\overline{f}_i\}_{i=1}^r
\mbox{linearly independent over } K\}$.
\end{itemize}
\end{lemma}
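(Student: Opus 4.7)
The plan is to exploit the fundamental fact that the evaluation map ${\rm ev}_d\colon S_d\to K^m$ has kernel exactly $I(\mathbb{X})_d$, so it descends to a $K$-linear isomorphism $\overline{\rm ev}_d\colon S_d/I(\mathbb{X})_d\xrightarrow{\ \sim\ } C_{\mathbb{X}}(d)$. All three parts will follow by transferring the linear-algebra picture in $C_\mathbb{X}(d)$ back to $S_d/I(\mathbb{X})_d$ through this isomorphism, and then invoking Lemma~\ref{seminar}.

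For part~(i), I would start with an arbitrary $K$-basis $\beta_1,\dots,\beta_r$ of $D\subset C_\mathbb{X}(d)$. Since each $\beta_i$ lies in the image of ${\rm ev}_d$, I can pick $f_i\in S_d$ with ${\rm ev}_d(f_i)=\beta_i=(f_i(P_1),\dots,f_i(P_m))$, so that $D=\bigoplus_{i=1}^r K\beta_i$. The linear independence of the classes $\overline{f}_1,\dots,\overline{f}_r$ in $S_d/I(\mathbb{X})_d$ is immediate from the injectivity of $\overline{\rm ev}_d$: any nontrivial relation among the $\overline{f}_i$ would transport to one among the $\beta_i$. The claim $\chi(D)=\bigcup_{i=1}^r\chi(\beta_i)$ is exactly the content of Lemma~\ref{seminar}.

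For part~(ii), I would read off the definition of support from the matrix in Lemma~\ref{seminar}: the $j$-th column is the zero column precisely when $f_i(P_j)=0$ for every $i=1,\dots,r$, i.e.\ precisely when $[P_j]\in V_\mathbb{X}(f_1,\dots,f_r)$. Consequently the number of nonzero columns equals the number of points of $\mathbb{X}$ \emph{not} lying in $V_\mathbb{X}(F)$, which gives $|\chi(D)|=|\mathbb{X}\setminus V_\mathbb{X}(f_1,\dots,f_r)|$.

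For part~(iii), I would combine (i) and (ii) with the definition $\delta_r(C_\mathbb{X}(d))=\min\{|\chi(D)|\colon D\subset C_\mathbb{X}(d),\ \dim_K D=r\}$. Part (i) shows that every $r$-dimensional subcode $D$ arises from some $F=\{f_1,\dots,f_r\}\subset S_d$ whose classes modulo $I(\mathbb{X})_d$ are $K$-linearly independent, and conversely every such $F$ produces, via ${\rm ev}_d$, an $r$-dimensional subcode $D_F\subset C_\mathbb{X}(d)$ (again using the isomorphism $\overline{\rm ev}_d$). Part (ii) then identifies $|\chi(D_F)|$ with $|\mathbb{X}\setminus V_\mathbb{X}(F)|$, yielding the desired minimum formula. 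No step presents a real obstacle; the only thing to be careful about is to match both directions of the correspondence between $r$-dimensional subcodes of $C_\mathbb{X}(d)$ and $r$-subsets of $S_d$ with linearly independent images in $S_d/I(\mathbb{X})_d$, which is precisely what the isomorphism $\overline{\rm ev}_d$ provides.
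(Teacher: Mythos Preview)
Your argument is correct and essentially identical to the paper's: both use that ${\rm ev}_d$ induces an isomorphism $S_d/I(\mathbb{X})_d\simeq C_\mathbb{X}(d)$ together with Lemma~\ref{seminar} for part~(i), read off part~(ii) from the nonzero columns of the matrix of the $\beta_i$, and deduce part~(iii) from the definition of $\delta_r(C_\mathbb{X}(d))$. The only extra detail you make explicit is the converse direction in~(iii), which the paper leaves implicit.
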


\begin{proof} (i): This part follows from Lemma~\ref{seminar} and using that the evaluation map
${\rm ev}_d$ induces an isomorphism between $S_d/I_d$ and 
$C_\mathbb{X}(d)$.

(ii): Consider the matrix $A$ with rows $\beta_1, \ldots,\beta_r$.
Notice that the $i$-th column of $A$ is not zero 
if and only if $[P_i]$ is in $\mathbb{X}\setminus
V_\mathbb{X}(f_1,\ldots,f_r)$. It suffices to observe that the number of non-zero 
columns of $A$ is $|\chi(D)|$ (see Lemma~\ref{seminar}).

(iii): This follows from part (ii) and using the definition of the
$r$-th generalized Hamming weight of $C_\mathbb{X}(d)$ (see
Section~\ref{prelim-section}).
\end{proof}

\begin{definition}\label{vasconcelos-function} If $I\subset S$ is a
graded ideal, the {\it Vasconcelos function\/} of $I$ is the
function 
$\vartheta_I\colon \mathbb{N}_+\times\mathbb{N}_+\rightarrow \mathbb{N}$ given by 
$$
\vartheta_I(d,r):=\left\{\begin{array}{ll}\min\{\deg(S/(I\colon(F)))\vert\,
F\in\mathcal{F}_{d,r}\}&\mbox{if }\mathcal{F}_{d,r}\neq\emptyset,\\
\deg(S/I)&\mbox{if\ }\mathcal{F}_{d,r}=\emptyset.
\end{array}\right.
$$
\end{definition}

\begin{theorem}\label{rth-min-dis-vi} 
Let $K$ be a field and let $\mathbb{X}$ be a finite subset of
$\mathbb{P}^{s-1}$. If  $|\mathbb{X}|\geq 2$ and
$\delta_\mathbb{X}(d,r)$ is the $r$-th generalized Hamming weight of
$C_\mathbb{X}(d)$, then 
$$\delta_\mathbb{X}(d,r)=\delta_{I(\mathbb{X})}(d,r)=\vartheta_{I(\mathbb{X})}(d,r)\
\mbox{ for }d\geq 1\mbox{ and }1\leq r\leq H_{I(\mathbb{X})}(d),$$
and $\delta_\mathbb{X}(d,r)=r$ for $d\geq {\rm reg}(S/I(\mathbb{X}))$.
\end{theorem}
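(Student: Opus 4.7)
The plan is to translate each of the three quantities $\delta_\mathbb{X}(d,r)$, $\delta_{I(\mathbb{X})}(d,r)$, and $\vartheta_{I(\mathbb{X})}(d,r)$ into a common combinatorial count. Write $I=I(\mathbb{X})$. By Lemma~\ref{mar14-17}(iii), $\delta_\mathbb{X}(d,r)$ equals the minimum of $|\mathbb{X}\setminus V_\mathbb{X}(F)|$ taken over all $F=\{f_1,\ldots,f_r\}\subset S_d$ whose classes $\overline{f}_i$ are $K$-linearly independent in $S_d/I_d$. By Lemma~\ref{vila-vivares-mar10-17}, the colon condition $(I\colon(F))\neq I$ built into the definition of $\mathcal{F}_{d,r}$ is equivalent to $V_\mathbb{X}(F)\neq\emptyset$; then Lemma~\ref{degree-formula-for-the-number-of-non-zeros} supplies $|\mathbb{X}\setminus V_\mathbb{X}(F)|=\deg(S/(I\colon(F)))$ when $F\in\mathcal{F}_{d,r}$, while in the complementary case $|\mathbb{X}\setminus V_\mathbb{X}(F)|=|\mathbb{X}|=\deg(S/I)$.

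Next I would split into cases. If $\mathcal{F}_{d,r}=\emptyset$ then every admissible $F$ satisfies $V_\mathbb{X}(F)=\emptyset$, so the minimum in Lemma~\ref{mar14-17}(iii) is $|\mathbb{X}|$, matching the values assigned to $\delta_I(d,r)$ and $\vartheta_I(d,r)$ by their definitions. If $\mathcal{F}_{d,r}\neq\emptyset$ I would invoke the primary-decomposition identity
$(I\colon(F))=\bigcap_{[P_j]\notin V_\mathbb{X}(F)}\mathfrak{p}_j$ derived in the proof of Lemma~\ref{degree-formula-for-the-number-of-non-zeros} to see that for any $F\in\mathcal{F}_{d,r}$ at least one point-prime is dropped, forcing $\deg(S/(I\colon(F)))\leq|\mathbb{X}|-1$. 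Hence the overall minimum in Lemma~\ref{mar14-17}(iii) is attained inside $\mathcal{F}_{d,r}$ and equals $\vartheta_I(d,r)$. The proposition closing Section~\ref{computing-zeros-of-varieties-section} supplies the degree decomposition
$$
\deg(S/I)=\deg(S/(I,F))+\deg(S/(I\colon(F))),
$$
so minimizing $\deg(S/(I\colon(F)))$ over $\mathcal{F}_{d,r}$ is equivalent to maximizing $\deg(S/(I,F))$, yielding $\vartheta_I(d,r)=\delta_I(d,r)$.

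For the final assertion, when $d\geq {\rm reg}(S/I)$ the Hilbert function has stabilized, $H_I(d)=\deg(S/I)=|\mathbb{X}|$, so the evaluation map ${\rm ev}_d\colon S_d\to K^{|\mathbb{X}|}$ is surjective and $C_\mathbb{X}(d)=K^{|\mathbb{X}|}$. The $r$-th generalized Hamming weight of the ambient space $K^m$ is exactly $r$: the general lower bound $\delta_r(C)\geq r$ recalled in Section~\ref{prelim-section} is saturated by the subcode spanned by the first $r$ coordinate vectors $e_1,\ldots,e_r$, which has support of size $r$. The main technical step is the strict inequality $\deg(S/(I\colon(F)))<|\mathbb{X}|$ for $F\in\mathcal{F}_{d,r}$: it is precisely this point that rules out the possibility that the minimum over all admissible $F$ is realized by some set with $V_\mathbb{X}(F)=\emptyset$, and once it is in place the remaining identifications are bookkeeping among the three formulations.
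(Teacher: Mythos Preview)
Your proof is correct and follows essentially the same approach as the paper: both arguments use Lemma~\ref{mar14-17}(iii) as the bridge to coding theory, split on whether $\mathcal{F}_{d,r}$ is empty, and invoke the degree formulas of Section~\ref{computing-zeros-of-varieties-section}. The only minor differences are that you make explicit the restriction of the minimum to $\mathcal{F}_{d,r}$ (which the paper leaves implicit), you link $\vartheta_I$ and $\delta_I$ via the degree-decomposition proposition rather than computing each separately from $\delta_\mathbb{X}$, and for the regularity claim you exhibit coordinate vectors directly whereas the paper cites the generalized Singleton bound.
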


\begin{proof} If $\mathcal{F}_{d,r}=\emptyset$, then using
Lemmas~\ref{degree-formula-for-the-number-of-non-zeros}, 
\ref{degree-formula-for-the-number-of-zeros-proj}, and
\ref{mar14-17} we get that $\delta_\mathbb{X}(d,r)$, 
$\delta_{I(\mathbb{X})}(d,r)$, and $\vartheta_{I(\mathbb{X})}(d,r)$ are equal to
$\deg(S/I(\mathbb{X}))=|\mathbb{X}|$. Assume that
$\mathcal{F}_{d,r}\neq \emptyset$ and set $I=I(\mathbb{X})$. 
Using Lemma~\ref{mar14-17} and the formula for $V_\mathbb{X}(F)$ of 
Lemma~\ref{degree-formula-for-the-number-of-zeros-proj}, we
obtain
\begin{eqnarray*}
\delta_\mathbb{X}(d,r)&\stackrel{(\ref{mar14-17})}{=}&\min\{|\mathbb{X}\setminus
V_\mathbb{X}(F)|\colon F\in
\mathcal{F}_{d,r}\}
\stackrel{(\ref{degree-formula-for-the-number-of-zeros-proj})}{=}
|\mathbb{X}|-\max\{\deg(S/(I,F))\vert\,
F\in \mathcal{F}_{d,r}\}\\
&=&\deg(S/I)-\max\{\deg(S/(I,F))\vert\,
F\in \mathcal{F}_{d,r}\}=\delta_{I}(d,r),\mbox{ and}\\
\delta_\mathbb{X}(d,r)&\stackrel{(\ref{mar14-17})}{=}&\min\{|\mathbb{X}\setminus
V_\mathbb{X}(F)|\colon F\in
\mathcal{F}_{d,r}\}
\stackrel{(\ref{degree-formula-for-the-number-of-non-zeros})}{=}
\min\{\deg(S/(I\colon(F)))\vert\,
F\in \mathcal{F}_{d,r}\}=\vartheta_{I}(d,r).
\end{eqnarray*}
In these equalities we used the fact that
$\deg(S/I(\mathbb{X}))=|\mathbb{X}|$. As $H_{I}(d)=|\mathbb{X}|$ for
$d\geq{\rm reg}(S/I)$, using the generalized Singleton bound for the
generalized Hamming weight and the fact that the weight hierarchy is an
increasing sequence we obtain that $\delta_\mathbb{X}(d,r)=r$ for
$d\geq {\rm reg}(S/I(\mathbb{X}))$ (see \cite[Theorem~1,
Corollary~1]{wei}).
\end{proof}

\begin{remark}\label{sarabia-vila} Let $\mathbb{X}$ be a finite set of projective points
over a field $K$. The following hold.
\begin{itemize}
\item[(a)] $r\leq\delta_\mathbb{X}(d,r)\leq |\mathbb{X}|$ 
for $d\geq 1$ and $1\leq r\leq H_{I(\mathbb{X})}(d)$. This follows 
from the fact that the weight hierarchy is an
increasing sequence (see \cite[Theorem~1]{wei}).
\item[(b)] If $d\geq {\rm reg}(S/I(\mathbb{X}))$, then
$C_\mathbb{X}(d)=K^{|\mathbb{X}|}$ and $\delta_\mathbb{X}(d,r)=r$ for
$1\leq r\leq |\mathbb{X}|$.
\item[(c)] If $C_\mathbb{X}(d)$ is non-degenerate, i.e., for each
$1\leq i\leq |\mathbb{X}|$ there is $\alpha\in C_\mathbb{X}(d)$ whose
$i$-th entry is non-zero, then
$\delta_\mathbb{X}(d,H_\mathbb{X}(d))=|\mathbb{X}|$.
\item[(d)] If $r>H_{I(\mathbb{X})}(d)$, then
$\mathcal{F}_{d,r}=\emptyset$ and
$\delta_{I(\mathbb{X})}(d,r)=|\mathbb{X}|$.
\end{itemize}
\end{remark}

\begin{lemma}\label{regular-elt-in} 
Let $\prec$ be a monomial order, let $I\subset S$ be an ideal, let
$F=\{f_1,\ldots,f_r\}$ be a set of polynomial of $S$ of positive
degree, and let ${\rm in}_\prec(F)=\{{\rm in}_\prec(f_1),\ldots,{\rm
in}_\prec(f_r)\}$ be the set of initial terms of $F$. 
If $({\rm in}_\prec(I)\colon({\rm in}_\prec(F)))={\rm in}_\prec(I)$, then
$(I\colon(F))=I$. 
\end{lemma}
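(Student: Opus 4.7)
The plan is to argue by contrapositive: assume $(I\colon(F))\neq I$ and produce an element of $({\rm in}_\prec(I)\colon({\rm in}_\prec(F)))$ that lies outside ${\rm in}_\prec(I)$. Pick $g\in (I\colon(F))\setminus I$, so that $gf_i\in I$ for every $i=1,\ldots,r$ but $g\notin I$.

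The first key step is to replace $g$ by a polynomial whose leading monomial is a standard monomial of $S/I$. Fix a Gr\"obner basis $\mathcal{G}$ of $I$ with respect to $\prec$ and let $g'$ be the normal form of $g$ modulo $\mathcal{G}$. Then $g-g'\in I$, so $g'\notin I$ (since $g\notin I$) and $g'f_i=gf_i-(g-g')f_i\in I$ for all $i$. Moreover, by the defining property of the normal form, either $g'=0$ or ${\rm in}_\prec(g')$ is not divisible by any leading monomial of $\mathcal{G}$, i.e., ${\rm in}_\prec(g')\notin {\rm in}_\prec(I)$. Since $g'\notin I$ we have $g'\neq 0$, so ${\rm in}_\prec(g')$ is a well-defined standard monomial. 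Thus, after replacing $g$ by $g'$, we may assume ${\rm in}_\prec(g)\notin {\rm in}_\prec(I)$ while keeping $gf_i\in I$ for all $i$.

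The second step uses the multiplicativity of leading monomials: for each $i$, the relation $gf_i\in I$ yields
\[
{\rm in}_\prec(g)\cdot {\rm in}_\prec(f_i)={\rm in}_\prec(gf_i)\in{\rm in}_\prec(I).
\]
Hence ${\rm in}_\prec(g)$ multiplies every element of ${\rm in}_\prec(F)$ into ${\rm in}_\prec(I)$, which means ${\rm in}_\prec(g)\in ({\rm in}_\prec(I)\colon ({\rm in}_\prec(F)))$. Combined with ${\rm in}_\prec(g)\notin {\rm in}_\prec(I)$, this gives the strict inclusion ${\rm in}_\prec(I)\subsetneq ({\rm in}_\prec(I)\colon({\rm in}_\prec(F)))$, proving the contrapositive.

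The argument is essentially a one-line computation once the reduction to a standard leading monomial is done; the only place requiring a little care is the normal-form step, where one must verify that $g'$ still satisfies both $g'\notin I$ and $g'f_i\in I$, but both facts follow immediately from $g-g'\in I$. So there is no substantive obstacle.
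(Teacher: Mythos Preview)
Your proof is correct and follows essentially the same approach as the paper: both reduce an element of the colon ideal to its normal form via the division algorithm and then use multiplicativity of leading monomials to conclude. The only difference is cosmetic---you phrase it as a contrapositive while the paper argues directly that any $g\in(I\colon(F))$ has zero remainder---but the substance is identical.
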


\begin{proof} Let $g$ be a polynomial of $(I\colon(F))$, that is,   
$gf_i\in I$ for $i=1,\ldots,r$. It
suffices to show that $g\in I$. Pick a Gr\"obner basis
$g_1,\ldots,g_n$ of $I$. Then, by the division algorithm
\cite[Theorem~3, p.~63]{CLO}, we can write 
$g=\sum_{i=1}^nh_ig_i+h$, 
where $h=0$ or $h$ is a finite sum of monomials not in 
${\rm in}_\prec(I)=({\rm in}_\prec(g_1),\ldots,{\rm
in}_\prec(g_n))$. We need only
show that $h=0$. If $h\neq 0$, then $hf_i$ is in $I$ 
and ${\rm in}_\prec(h){\rm in}_\prec(f_i)$ is in the ideal ${\rm
in}_\prec(I)$ for $i=1,\ldots,r$ . Hence ${\rm in}_\prec(h)$ is
in  $({\rm in}_\prec(I)\colon({\rm in}_\prec(F)))$. Therefore, by
hypothesis, ${\rm in}_\prec(h)$ is in the ideal ${\rm in}_\prec(I)$, 
a contradiction.
\end{proof}

Let $\prec$ be a monomial order and let $\mathcal{F}_{\prec,d,r}$ be
the set of all subsets 
$F=\{f_1,\ldots,f_r\}$ of $S_d$ such that $(I\colon(F))\neq I$, 
$f_i$ is a standard polynomial for all $i$, 
$\overline{f}_1,\ldots,\overline{f}_r$ are linearly independent over
the field $K$, and ${\rm in}_\prec(f_1),\ldots,{\rm in}_\prec(f_r)$
are distinct monomials. Let $F=\{f_1,\ldots,f_r\}$ be a set of
standard polynomials. It is not hard to see that
$\overline{f}_1,\ldots,\overline{f}_r$ are linearly independent over
the field $K$ if ${\rm in}_\prec(f_1),\ldots,{\rm in}_\prec(f_r)$
are distinct monomials.

The next result is useful for computations
with {\it Macaulay\/}$2$ \cite{mac2} (see 
Procedure~\ref{procedure-min-dis-fooprint}).

\begin{proposition}\label{march27-17} The generalized minimum distance function of $I$
is given by 
$$
\delta_I(d,r)=\left\{\begin{array}{ll}\deg(S/I)-\max\{\deg(S/(I,F))\vert\,
F\in\mathcal{F}_{\prec, d,r}\}&\mbox{if }\mathcal{F}_{\prec,d,r}\neq\emptyset,\\
\deg(S/I)&\mbox{if\ }\mathcal{F}_{\prec,d,r}=\emptyset.
\end{array}\right.
$$
\end{proposition}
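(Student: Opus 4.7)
The plan is to show that restricting the family defining $\delta_I(d,r)$ from $\mathcal{F}_{d,r}$ to its subset $\mathcal{F}_{\prec,d,r}$ does not alter the maximum. Since $\mathcal{F}_{\prec,d,r}\subseteq \mathcal{F}_{d,r}$, one inequality (together with the fact that $\mathcal{F}_{d,r}=\emptyset$ forces $\mathcal{F}_{\prec,d,r}=\emptyset$) is immediate. For the converse, I would show that every $F\in\mathcal{F}_{d,r}$ can be replaced by some $F'\in \mathcal{F}_{\prec,d,r}$ with $(I,F)=(I,F')$, whence $\deg(S/(I,F))=\deg(S/(I,F'))$. This will also settle the remaining empty-set case, because the construction will show $\mathcal{F}_{d,r}\neq\emptyset\Rightarrow\mathcal{F}_{\prec,d,r}\neq\emptyset$.

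The construction has two stages. First, pick a Gr\"obner basis of $I$ and replace each $f_i\in F$ by its normal form $\tilde f_i$ under the division algorithm \cite[Theorem~3, p.~63]{CLO}; then $\tilde f_i$ is a standard polynomial of degree $d$, $f_i-\tilde f_i\in I$, and $\overline{\tilde f}_i=\overline{f}_i$. Hence $\tilde F=\{\tilde f_1,\ldots,\tilde f_r\}$ satisfies $(I,F)=(I,\tilde F)$, $(I\colon(F))=(I\colon(\tilde F))$, and $\overline{\tilde f}_1,\ldots,\overline{\tilde f}_r$ remain linearly independent over $K$. By Macaulay's theorem (recalled in Section~\ref{prelim-section}), the $K$-vector space of standard polynomials of degree $d$ maps isomorphically onto $S_d/I_d$, so $\tilde f_1,\ldots,\tilde f_r$ are themselves linearly independent in $S_d$.

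Second, I would perform a Gauss-type reduction on $\tilde f_1,\ldots,\tilde f_r$ with respect to $\prec$: while two of them share the same leading monomial, subtract a suitable scalar multiple of one from the other to cancel the common leading term; the result has strictly smaller leading monomial, and is nonzero by linear independence. The process terminates after finitely many steps, since there are only finitely many monomials of degree $d$, producing a basis $f'_1,\ldots,f'_r$ of the $K$-span of $\tilde f_1,\ldots,\tilde f_r$ whose leading monomials are pairwise distinct. Each $f'_i$ is a $K$-linear combination of standard monomials of degree $d$, hence itself standard; and since $\{\tilde f_i\}$ and $\{f'_i\}$ span the same $K$-subspace of $S_d$, we have $(I,\tilde F)=(I,F')$ and $(I\colon(\tilde F))=(I\colon(F'))$, so in particular $(I\colon(F'))\neq I$.

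Combining the two stages, $F':=\{f'_1,\ldots,f'_r\}\in \mathcal{F}_{\prec,d,r}$ and $(I,F)=(I,F')$, yielding $\deg(S/(I,F))=\deg(S/(I,F'))$. This gives the reverse inequality for the maxima and the needed equivalence of the empty-set cases, completing the proof. The only delicate point is the bookkeeping of the Gauss-type reduction, namely checking that it terminates and that each intermediate polynomial remains standard and within the original $K$-span; no genuinely hard idea is involved.
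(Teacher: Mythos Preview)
Your proposal is correct and follows essentially the same two-stage approach as the paper: first replace each $f_i$ by its normal form via the division algorithm to obtain standard polynomials with the same residues, then perform a Gaussian-style reduction on leading terms to force the initial monomials to be pairwise distinct while preserving the $K$-span (and hence the ideals $(I,F)$ and $(I\colon(F))$). The paper phrases the second stage as an induction on $r$ rather than as a termination argument, but the content is identical.
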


\begin{proof} Take $F=\{f_1,\ldots,f_r\}$ in $\mathcal{F}_{d,r}$. By
the division algorithm any $f_i$ can be written as $f_i=p_i+h_i$,
where $p_i$ is in $I_d$ and $h_i$ is a $K$-linear combination of standard
monomials of degree $d$. Setting $H=\{h_1,\ldots,h_r\}$, notice that 
$(I\colon(F))=(I\colon(H))$, $(I,F)=(I,H)$,
$\overline{f}_i=\overline{h}_i$ for $i=1,\ldots,r$. Thus
$H\in\mathcal{F}_{d,r}$, that is, we may assume 
that $f_1,\ldots,f_r$ are standard polynomials. Setting
$KF=Kf_1+\cdots+Kf_r$, we claim that there is
a set $G=\{g_1,\ldots,g_r\}$ consisting of homogeneous standard 
polynomials of $S/I$ of degree $d$ such that $KF=KG$, ${\rm in}_\prec(g_1),\ldots,{\rm
in}_\prec(g_r)$ distinct monomials, and ${\rm
in}_\prec(f_i)\succeq{\rm in}_\prec(g_i)$ for all $i$. We proceed by
induction on $r$. The case $r=1$ is clear. Assume that $r>1$.
Permuting the $f_i$'s if necessary we may assume that ${\rm
in}_\prec(f_1)\succeq\cdots\succeq{\rm
in}_\prec(f_r)$. If ${\rm in}_\prec(f_1)\succ{\rm in}_\prec(f_2)$, the
claim follows applying the induction hypothesis to $f_2,\ldots,f_r$.
If ${\rm in}_\prec(f_1)={\rm in}_\prec(f_2)$, there is $k\geq 2$ such
that ${\rm in}_\prec(f_1)={\rm in}_\prec(f_i)$ for $i\leq k$ and 
${\rm in}_\prec(f_1)\succ {\rm in}_\prec(f_i)$ for $i>k$. We set
$h_i=f_1-f_i$ for $i=2,\ldots,k$ and $h_i=f_i$ for $i=k+1,\ldots,r$.
Notice that ${\rm in}_\prec(f_1)\succ h_i$ for $i\geq 2$ and that 
$h_2,\ldots,h_r$ are standard monomials of degree $d$ which are linearly
independent over $K$. Hence the claim follows applying the induction
hypothesis to $H=\{h_2,\ldots,h_r\}$. The required expression for
$\delta_I(d,r)$ follows readily using Theorem~\ref{rth-min-dis-vi}.
\end{proof}

Let $I$ be a graded ideal, let $\Delta_\prec^p(I)_d$ be the set of
standard polynomials of $S/I$ of degree $d$, and let
$\mathcal{F}^p_{\prec,d,r}$ be the set of all subsets of
$\Delta_\prec^p(I)_d$ with $r$ elements. If $K=\mathbb{F}_q$ is a
finite field, then
$$
|\Delta_\prec^p(I)_d|=q^{H_I(d)}-1\ \mbox{ and }\
|\mathcal{F}^p_{\prec,d,r}|=\binom{q^{H_I(d)}-1}{r}.
$$
\quad Thus computing $\delta_I(d,r)$ is very hard because one has to
determine which of the polynomials in $\mathcal{F}^p_{\prec,d,r}$ are
in $\mathcal{F}_{\prec,d,r}$, and then compute the corresponding
degrees. To compute
${\rm fp}_I(d,r)$ is much simpler because we only need to determine the set $\Delta_\prec(I)_{d,r}$
of all subsets of $\Delta_\prec(I)_d$ with $r$ elements, and one has
$$
|\Delta_\prec(I)_{d,r}|=\binom{H_I(d)}{r},
$$
which is much smaller than the size of $\mathcal{F}^p_{\prec,d,r}$.

We come to one of our main results.

\begin{theorem}\label{rth-footprint-lower-bound} 
Let $K$ be a field, let $\mathbb{X}$ be a finite subset of
$\mathbb{P}^{s-1}$, and let $\prec$ be a monomial order. If  $|\mathbb{X}|\geq 2$ and
$\delta_\mathbb{X}(d,r)$ is the $r$-th generalized Hamming weight of
$C_\mathbb{X}(d)$, then 
$${\rm fp}_{I(\mathbb{X})}(d,r)\leq \delta_\mathbb{X}(d,r)\ \mbox{ for
}d\geq 1\mbox{ and }1\leq r\leq H_{I(\mathbb{X})}(d).$$
\end{theorem}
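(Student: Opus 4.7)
The plan is to combine Theorem~\ref{rth-min-dis-vi}, which identifies $\delta_\mathbb{X}(d,r)$ with $\delta_{I(\mathbb{X})}(d,r)$, with the structurally parallel definitions of $\delta_I$ and ${\rm fp}_I$. Setting $I=I(\mathbb{X})$, both functions have the form $\deg(S/I)$ minus a maximum of degrees, so the inequality reduces to the comparison of maxima
\[
\max\{\deg(S/(I,F)) : F\in\mathcal{F}_{d,r}\} \;\leq\; \max\{\deg(S/({\rm in}_\prec(I),M)) : M\in\mathcal{M}_{\prec,d,r}\}.
\]
First I would dispose of the trivial case: if $\mathcal{F}_{d,r}=\emptyset$, then $\delta_{I}(d,r)=\deg(S/I)$, while by definition ${\rm fp}_I(d,r)\leq\deg(S/I)$, so there is nothing to prove.

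Assume $\mathcal{F}_{d,r}\neq\emptyset$. By Proposition~\ref{march27-17}, the maximum defining $\delta_I(d,r)$ is already attained over the smaller family $\mathcal{F}_{\prec,d,r}$, so we may restrict attention to sets $F=\{f_1,\ldots,f_r\}$ of standard polynomials of degree $d$ whose leading monomials are pairwise distinct. For each such $F$, set
\[
M(F):=\{{\rm in}_\prec(f_1),\ldots,{\rm in}_\prec(f_r)\}\subset\Delta_\prec(I)_d;
\]
this is an $r$-element subset of $\Delta_\prec(I)_d$ by the very definition of $\mathcal{F}_{\prec,d,r}$.

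The next step is to verify $M(F)\in\mathcal{M}_{\prec,d,r}$, i.e.\ that $({\rm in}_\prec(I)\colon (M(F)))\neq{\rm in}_\prec(I)$. This is precisely the contrapositive of Lemma~\ref{regular-elt-in}: since $(I\colon (F))\neq I$ by membership in $\mathcal{F}_{d,r}$, the corresponding colon ideal for the initial monomials must also be strict. With $M(F)$ now a legitimate element of $\mathcal{M}_{\prec,d,r}$, Lemma~\ref{degree-initial-footprint}, applied to the unmixed (radical) ideal $I=I(\mathbb{X})$, gives
\[
\deg(S/(I,F)) \;\leq\; \deg(S/({\rm in}_\prec(I),M(F))).
\]
Taking the maximum over $F\in\mathcal{F}_{\prec,d,r}$ on the left and bounding by the max over all of $\mathcal{M}_{\prec,d,r}$ on the right yields the required inequality of maxima, and hence ${\rm fp}_{I(\mathbb{X})}(d,r)\leq\delta_{I(\mathbb{X})}(d,r)=\delta_\mathbb{X}(d,r)$.

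The main obstacle is really just organizational: one must ensure that the passage $F\mapsto M(F)$ actually lands in $\mathcal{M}_{\prec,d,r}$ rather than collapsing (through repeated leading monomials) or failing the colon-ideal condition. Proposition~\ref{march27-17} handles the first point by letting us pass to $\mathcal{F}_{\prec,d,r}$, while Lemma~\ref{regular-elt-in} handles the second. Once these two ingredients are in place, the argument is a clean assembly of Lemma~\ref{degree-initial-footprint} and Theorem~\ref{rth-min-dis-vi} with no further computation required.
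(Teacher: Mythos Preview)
Your proof is correct and follows essentially the same route as the paper, which simply cites Theorem~\ref{rth-min-dis-vi}, Lemma~\ref{regular-elt-in}, and Proposition~\ref{march27-17}. Your write-up is in fact more complete: you make explicit the appeal to Lemma~\ref{degree-initial-footprint} (the degree comparison $\deg(S/(I,F))\leq\deg(S/({\rm in}_\prec(I),{\rm in}_\prec(F)))$ for unmixed $I$), which is the key inequality the paper's one-line proof uses implicitly but does not name.
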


\begin{proof} This follows from Theorem~\ref{rth-min-dis-vi},
Lemma~\ref{regular-elt-in}, and Proposition~\ref{march27-17}.
\end{proof}

\section{Projective nested cartesian codes}\label{pncc-section}

In this section we introduce projective nested cartesian codes, a type of evaluation codes that 
generalize the classical projective Reed--Muller
codes \cite{lachaud,mercier-rolland,sorensen}, and give a lower bound for the minimum distance
of some of these codes. 

Let $K=\mathbb{F}_q$ be a finite field, let $A_1,\ldots,A_s$ be a collection of subsets 
of $K$, and let 
$$
\mathcal{X}=[A_1\times\cdots\times A_s]
$$
be the image of $A_1\times\cdots\times A_s\setminus\{0\}$ under the map 
$K^s\setminus\{0\}\rightarrow\mathbb{P}^{s-1}$, $x\rightarrow [x]$. 

\begin{definition}{\cite{carvalho-lopez-lopez}}\label{pncc}
The set $\mathcal{X}$ is called a
{\it projective nested cartesian set} if  
\begin{itemize}
\item[(i)] $\{0,1\}\subset A_i$ for $i=1,\ldots,s$, 
\item[(ii)] $a/b\in A_j$ for  $1\leq i<j\leq s$, 
$a\in A_j$, $0\neq b\in A_i$, and 
\item[(iii)] $d_1\leq\cdots\leq d_s$, where $d_i=|A_i|$ for
$i=1,\ldots,s$.  
\end{itemize}
\quad If $\mathcal{X}$ is a projective nested cartesian set and $C_\mathcal{X}(d)$ is its 
corresponding $d$-th projective Reed-Muller-type code, we call 
$C_{\mathcal{X}}(d)$ a {\it projective nested cartesian
code}. 
\end{definition}

The next conjecture is not true as will be shown in
Example~\ref{counterexample-nested-2-2-4}.

\begin{conjecture}{\rm(Carvalho, Lopez-Neumann, and L\'opez 
\cite{carvalho-lopez-lopez},
\cite[Conjecture~6.2]{hilbert-min-dis})}\label{carvalho-lopez-lopez-conjecture}
Let $C_\mathcal{X}(d)$ be the $d$-th projective nested
cartesian code on the set
$\mathcal{X}=[A_1\times\cdots\times A_s]$ with $d_i=|A_i|$ for
$i=1,\ldots,s$. Then its minimum distance is given by 
$$
\delta_\mathcal{X}(d,1)=\left\{\hspace{-1mm}
\begin{array}{ll}\left(d_{k+2}-\ell+1\right)d_{k+3}\cdots d_s&\mbox{ if }
d\leq \sum\limits_{i=2}^{s}\left(d_i-1\right),\\
\qquad \qquad 1&\mbox{ if } d\geq
\sum\limits_{i=2}^{s}\left(d_i-1\right)+1,
\end{array}
\right.
$$
where $0\leq k\leq s-2$ and $\ell$ are the unique integers such that 
$d=\sum_{i=2}^{k+1}\left(d_i-1\right)+\ell$ and $1\leq \ell \leq
d_{k+2}-1$. 
\end{conjecture}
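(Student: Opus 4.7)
The plan follows the two-sided strategy that succeeds for affine cartesian codes: first bound $\delta_\mathcal{X}(d,1)$ from below via the footprint function, then match it from above by exhibiting an explicit polynomial whose number of zeros on $\mathcal{X}$ realizes the formula.

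\textbf{Lower bound via the footprint.} The first step is to invoke Theorem~\ref{rth-footprint-lower-bound} in the case $r=1$, which reduces the problem to evaluating ${\rm fp}_{I(\mathcal{X})}(d,1)$. Using the Gr\"obner basis of $I(\mathcal{X})$ provided by \cite[Lemma~2.4]{carvalho-lopez-lopez} under a graded lex order with $t_1\succ t_2\succ\cdots\succ t_s$, one can write down the initial ideal ${\rm in}_\prec(I(\mathcal{X}))$ and, with it, the standard monomials $\Delta_\prec(I(\mathcal{X}))_d$. For each standard monomial $t^a$ of degree $d$ the quantity $\deg(S/({\rm in}_\prec(I(\mathcal{X})),t^a))$ is a straightforward monomial count in the quotient by a monomial ideal. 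Maximizing over $t^a\in\Delta_\prec(I(\mathcal{X}))_d$ gives $\deg(S/I(\mathcal{X}))-{\rm fp}_{I(\mathcal{X})}(d,1)$. Following the combinatorial optimization used by Geil \cite{geil} and Carvalho \cite{carvalho} in the affine setting, the extremal standard monomial should turn out to be of the form $t_2^{d_2-1}\cdots t_{k+1}^{d_{k+1}-1}t_{k+2}^{\ell}$, producing the value $(d_{k+2}-\ell+1)d_{k+3}\cdots d_s$.

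\textbf{Matching upper bound via an explicit polynomial.} For the upper bound the natural candidate is
$$
f \;=\; \prod_{j=2}^{k+1}\prod_{a\in A_j\setminus\{0\}} (t_j - a\, t_1)\;\cdot\;\prod_{i=1}^{\ell}(t_{k+2}-b_i\, t_1),
$$
with distinct $b_i\in A_{k+2}\setminus\{0\}$. This $f$ is homogeneous of degree $d$, and a direct count over the nested cartesian product structure should give $|\mathcal{X}\setminus V_\mathcal{X}(f)|=(d_{k+2}-\ell+1)d_{k+3}\cdots d_s$; applying Lemma~\ref{degree-formula-for-the-number-of-non-zeros} then yields the required upper bound on $\delta_\mathcal{X}(d,1)$.

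\textbf{Main obstacle.} The plan cannot be completed as stated, and this is not a matter of technical difficulty: the conjecture is in fact false, as witnessed by Example~\ref{counterexample-nested-2-2-4}. The breakdown is structural and occurs in the upper bound step. In the affine cartesian setting, each factor $(t_j-a)$ vanishes on exactly one coordinate hyperplane, so the zero counts multiply cleanly across slices. In the projective nested setting, however, the dehomogenized factor $(t_j-a\, t_1)$ can vanish on extra projective directions contributed by the smaller slices $A_i$ with $i<j$, precisely because clause (ii) of Definition~\ref{pncc} forces $a/b\in A_j$ for every $0\neq b\in A_i$. This extra vanishing is invisible to the na\"{\i}ve count and causes the candidate polynomial to have strictly more zeros on $\mathcal{X}$ than predicted, so the true $\delta_\mathcal{X}(d,1)$ can drop below $(d_{k+2}-\ell+1)d_{k+3}\cdots d_s$. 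Any corrected version of the statement must therefore either restrict to the homogeneous regime $d_1=\cdots=d_s$ (where one recovers the S\o rensen--Mercier--Rolland formula) or replace the linear expression in $\ell$ by a refined combinatorial quantity that accounts for the interaction among levels of the nesting; the footprint framework of Theorem~\ref{rth-footprint-lower-bound} together with Theorem~\ref{sep10-16} remains the natural tool for such a refinement.
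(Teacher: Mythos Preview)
You are right that this statement is a conjecture the paper disproves via Example~\ref{counterexample-nested-2-2-4}; there is nothing to prove, and on that point you and the paper agree. But your diagnosis of where the two-sided plan fails is inverted. In the counterexample ($s=3$, $d_1=d_2=2$, $d_3=4$, $d=4$) one has $\delta_\mathcal{X}(4,1)={\rm fp}_{I(\mathcal{X})}(4,1)=1$ while the conjectured value is $2$. The inequality $\delta_\mathcal{X}(4,1)\leq 2$ therefore survives; what fails is the \emph{lower} bound, and it already fails at the footprint level. Your lower-bound step asserts that the footprint optimization over standard monomials yields $(d_{k+2}-\ell+1)d_{k+3}\cdots d_s$, and that is exactly the step that does not transfer from the affine setting: the initial ideal ${\rm in}_\prec(I(\mathcal{X}))$ is not of the complete-intersection shape $(t_1^{d_1},\ldots,t_{n}^{d_n})$ that underlies the affine combinatorics, and in the example the maximum of $\deg\bigl(S/({\rm in}_\prec(I(\mathcal{X})),t^a)\bigr)$ over $t^a\in\Delta_\prec(I(\mathcal{X}))_4$ is $12$, not $11$.

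Your account of the mechanism is also off. The paper's witness for $\delta_\mathcal{X}(4,1)=1$ is $t_3(t_3^3-t_2^3-t_1^3+t_1^2t_2)$, not a product of linear forms, and clause~(ii) of Definition~\ref{pncc} plays no visible role in it. Moreover, your candidate $f=(t_2-t_1)\prod_{i=1}^{3}(t_3-b_it_1)$ does \emph{not} have ``strictly more zeros on $\mathcal{X}$ than predicted'': a direct check gives four non-zeros (the points $[1{:}0{:}0]$ and $[0{:}1{:}c]$ for $c\in\mathbb{F}_4^{*}$), so it does not even establish the conjectured upper bound $\delta_\mathcal{X}(4,1)\leq 2$, let alone witness $\delta_\mathcal{X}(4,1)<2$. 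The genuine obstruction lives on the footprint side, not in a hidden over-vanishing of the product-of-linears candidate.
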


This conjecture is true in some 
special cases \cite[Theorem~3.8]{carvalho-lopez-lopez}, 
which includes the classical projective Reed-Muller codes
\cite{sorensen}. The minimum distance of $C_\mathcal{X}(d)$ proposed in
Conjecture~\ref{carvalho-lopez-lopez-conjecture} 
is in fact the minimum distance of a certain evaluation linear code
\cite[Corollary~6.9]{hilbert-min-dis}. 

In what follows $\mathcal{X}=[A_1\times\cdots\times A_s]$ denotes a
projective nested cartesian set, with $d_i=|A_i|$, and $C_\mathcal{X}(d)$ is its 
corresponding projective nested cartesian code. Throughout this
section $\prec$ is the lexicographical order on $S$, with 
$t_1\prec\cdots \prec t_s$, and ${\rm in}_\prec(I(\mathcal{X}))$ is the
initial ideal of $I(\mathcal{X})$.

\begin{lemma}{\cite[Lemma~5.6]{hilbert-min-dis}}\label{pepe-vila-old} 
Let $1\leq e_1\leq\cdots\leq e_m$ and $0\leq b_i\leq e_i-1$
for $i=1,\ldots,m$ be integers. Then 
\begin{equation}\label{aug-18-15-old}
\prod_{i=1}^m(e_i-b_i)\geq\left(\sum_{i=1}^k(e_i-b_i)-(k-1)-
\sum_{i=k+1}^mb_i\right)e_{k+1}\cdots e_m
\end{equation}
for $k=1,\ldots,m$, where $e_{k+1}\cdots e_m=1$ and
$\sum_{i=k+1}^mb_i=0$ if $k=m$.
\end{lemma}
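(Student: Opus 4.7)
Substitute $c_i := e_i - b_i \in [1,e_i]$ and rewrite the inequality as $\prod_{i=1}^m c_i \ge (\alpha - \beta)E_k$, where $\alpha := \sum_{i=1}^k c_i - (k-1)$, $\beta := \sum_{i>k} b_i$, and $E_k := e_{k+1}\cdots e_m$. When $\alpha \le \beta$ the right-hand side is non-positive and the claim is trivial, so I may assume $\alpha > \beta$. The plan is to induct on $m$, separating the cases $k = m$ and $k < m$.

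For $k = m$ the inequality reduces to the classical $\prod c_i \ge \sum c_i - (m-1)$ for positive integers; a short induction on $m$ based on $(c_1 - 1)(c_2\cdots c_m - 1) \ge 0$ handles it, and along the way yields the useful identity
\[
\prod_{i=1}^k c_i - \alpha \;=\; \sum_{\ell=1}^{k-1}(c_\ell - 1)(c_{\ell+1}\cdots c_k - 1),
\]
expressing the classical slack as a sum of explicit non-negative products; this identity is used below.

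For $k < m$ I plan to combine this identity with the telescoping formula $E_k - \prod_{i>k} c_i = \sum_{j>k} b_j \prod_{k<i<j} e_i \prod_{i>j} c_i$, obtained by replacing the $e_j$'s in $E_k$ by $c_j = e_j - b_j$ one at a time. Multiplying by $\prod_{i \le k} c_i$ and using $\beta E_k = \sum_{j>k} b_j E_k$ together with the identity above, one arrives at the explicit decomposition
\[
\prod_{i=1}^m c_i - (\alpha-\beta)E_k \;=\; E_k \sum_{\ell=1}^{k-1}(c_\ell - 1)(c_{\ell+1}\cdots c_k - 1) + \sum_{j>k} b_j \prod_{k<i<j} e_i \left[e_j \prod_{i>j} e_i - \prod_{i \le k} c_i \prod_{i>j} c_i\right].
\]
The first sum is manifestly non-negative, and each bracket in the second sum is non-negative as soon as $\prod_{i \le k} c_i \le e_j$, which holds uniformly when $\prod_{i \le k} c_i \le e_{k+1}$, since $e_j \ge e_{k+1}$ for $j \ge k+1$. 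This disposes of the whole case $k = 1$ for free, as then $\prod_{i \le k} c_i = c_1 \le e_1 \le e_{k+1}$.

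The principal obstacle is the sub-case $\prod_{i \le k} c_i > e_{k+1}$, which forces $k \ge 2$ and at least one $c_\ell \ge 2$ among the first $k$ positions. Here individual brackets in the second sum can be negative and must be absorbed by the (strictly positive) first sum. To settle it I would run the outer induction on $m$, splitting on $b_m$: if $b_m = 0$ the inequality drops to $m-1$ variables with the same $k$ after factoring $c_m = e_m$; if $b_m \ge 1$, one exploits the monotonicity $e_1 \le \cdots \le e_m$ together with the fine structure of the identity for $\prod_{i \le k} c_i - \alpha$ to trade the negative brackets (especially the one at $j = m$) against pieces of the first sum. The detailed accounting in this last sub-case, in which the ordering of the $e_i$'s is precisely what makes the net contribution non-negative, is where I expect the bulk of the technical work to lie.
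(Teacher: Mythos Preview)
The paper does not prove this lemma; it is quoted from \cite[Lemma~5.6]{hilbert-min-dis} and used as a black box in the proof of Lemma~\ref{sep5-16}. There is therefore no in-paper argument to compare yours against.

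On your proposal itself: the substitution $c_i=e_i-b_i$, the base case $k=m$, the telescoping identity for $E_k-\prod_{i>k}c_i$, and your decomposition
\[
\prod_{i=1}^m c_i-(\alpha-\beta)E_k
= E_k\sum_{\ell=1}^{k-1}(c_\ell-1)(c_{\ell+1}\cdots c_k-1)
+\sum_{j>k}b_j\!\!\prod_{k<i<j}\!\! e_i\Bigl[e_j\prod_{i>j}e_i-\Bigl(\prod_{i\le k}c_i\Bigr)\prod_{i>j}c_i\Bigr]
\]
are all correct, and they cleanly dispose of $k=m$, the trivial range $\alpha\le\beta$, the case $k=1$, and more generally the sub-case $\prod_{i\le k}c_i\le e_{k+1}$. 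But the proof is incomplete. In the remaining sub-case $\prod_{i\le k}c_i>e_{k+1}$ you offer only the plan ``trade the negative brackets against pieces of the first sum,'' and you acknowledge that this accounting is where the bulk of the work lies. That is precisely the heart of the lemma, and it is not written out; until it is, there is a genuine gap. Your outer induction on $m$ via the split on $b_m$ is fine when $b_m=0$ (factor out $c_m=e_m$ and drop to $m-1$ variables with the same $k$), but when $b_m\ge1$ it does not by itself reduce anything: you are thrown back on the same unspecified trading argument at the largest index with $b_j\ge1$.
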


\begin{lemma}\label{sep5-16}  Let $g$
be a homogeneous polynomial of $S$ of degree $1\leq d\leq \sum_{i=1}^{s-1}(d_i-1)$
and let $[P]$ be a point of $\mathbb{P}^{s-1}$. If $g$ vanishes at all
points of $\mathcal{X}\setminus\{[P]\}$, then $g(P)=0$. 
\end{lemma}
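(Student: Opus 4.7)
The plan is a proof by contradiction: assume $g(P)\neq 0$ and derive a contradiction. After rescaling a representative of $[P]$ so that its first nonzero coordinate is $1$, if this is at position $k>1$ --- i.e., $p_1=\cdots=p_{k-1}=0$ --- I would reduce to the case $k=1$ by restricting $g$ and $\mathcal{X}$ to the slice $\mathcal{X}\cap\{t_1=\cdots=t_{k-1}=0\}=[A_k\times\cdots\times A_s]$, which is itself a projective nested cartesian set in $\mathbb{P}^{s-k}$, and applying induction on $s$. So the main case is $p_1\neq 0$, $P=(1,p_2,\ldots,p_s)$.

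In the main case, I would dehomogenize: let $G(t_2,\ldots,t_s):=g(1,t_2,\ldots,t_s)$, a polynomial of total degree at most $d$. The hypothesis says $G$ vanishes at every point of the affine grid $A_2\times\cdots\times A_s$ other than possibly $Q:=(p_2,\ldots,p_s)$. Reducing $G$ modulo $(f_{A_2}(t_2),\ldots,f_{A_s}(t_s))$, with $f_{A_i}(t_i):=\prod_{a\in A_i}(t_i-a)$, produces a standard form $\widetilde G$ with each $t_i$-degree $<d_i$ and total degree $\leq\deg G$, agreeing with $G$ on the grid. If $[P]\in\mathcal{X}$ (so $Q$ lies on the grid), then the grid-functions supported only at $Q$ form a one-dimensional space spanned by the indicator $I_Q:=\prod_{i=2}^{s}\prod_{a\in A_i\setminus\{p_i\}}(t_i-a)$ of total degree $\sum_{i=2}^{s}(d_i-1)$; so $\widetilde G$ is either $0$ --- giving $G(Q)=g(P)=0$, a contradiction --- or a nonzero scalar multiple of $I_Q$, in which case the chain $\sum_{i=2}^{s}(d_i-1)\leq\deg\widetilde G\leq\deg G\leq d\leq\sum_{i=1}^{s-1}(d_i-1)\leq\sum_{i=2}^{s}(d_i-1)$ (the last step using $d_1\leq d_s$) collapses to equalities, forcing $d_1=\cdots=d_s$.

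To close this tight all-equal edge case, I would exploit the additional vanishing of $g$ on the slice $\mathcal{X}\cap\{t_1=0\}\subset\mathbb{P}^{s-2}$, a projective nested cartesian subset disjoint from $[P]$: iterating the dehomogenization/slice argument (or applying induction on $s$ to the sliced set) should force $t_1\mid g$, as happens transparently in the $s=2$ base case, where vanishing at $[0:1]$ forces this directly, and this drops the effective degree of $G$ strictly below $\sum_{i=2}^{s}(d_i-1)$, ruling out the indicator scenario. The remaining case $[P]\notin\mathcal{X}$ (so $Q$ is off the grid) is handled analogously: then $\widetilde G$ must be $0$, $G$ lies in the grid ideal, and a further appeal to the vanishing on $\mathcal{X}\cap\{t_1=0\}$ together with induction on $s$ yields $g(P)=0$. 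The main obstacle throughout is this tight edge case: the degree inequality saturates, so the contradiction must be extracted from the coordinate-slice constraints rather than from pure degree counting, and making the induction compatible with the slightly different degree bound after slicing is the delicate step.
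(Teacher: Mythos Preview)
Your approach via dehomogenization and the grid indicator function is genuinely different from the paper's, but the two places you flag as ``delicate'' are in fact real gaps that your sketch does not close.

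First, the reduction from $k>1$ to $k=1$ by restricting to the slice $[A_k\times\cdots\times A_s]$ and ``applying induction on $s$'' fails: the inductive instance of the lemma on that slice requires the degree bound $d\leq\sum_{i=k}^{s-1}(d_i-1)$, which is strictly smaller than the bound $d\leq\sum_{i=1}^{s-1}(d_i-1)$ you actually have (since $d_i\geq 2$ and $k>1$). So for $d$ near the top of the allowed range the induction hypothesis simply does not apply. Second, in the main case $k=1$ with $d_1=\cdots=d_s$ and $d=\sum_{i=2}^s(d_i-1)$, your claim that vanishing of $g$ on the slice $\mathcal{X}\cap\{t_1=0\}=[A_2\times\cdots\times A_s]$ forces $t_1\mid g$ is not justified: the vanishing ideal of $[A_2\times\cdots\times A_s]$ contains nonzero elements of degree $\leq d$ (its generators have degree $d_j+1$, and already for $s\geq 4$ one has $d_2+1\leq (s-1)(d_1-1)=d$), so $g|_{t_1=0}$ can be a nonzero element of that ideal. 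Thus neither a direct argument nor the inductive lemma (whose degree bound is again too small on the slice) yields $g|_{t_1=0}=0$. You would need to use the full vanishing hypothesis on \emph{all} of $\mathcal{X}\setminus\{[P]\}$, not just on the slice, to rule this out---and your outline gives no mechanism for that.

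The paper sidesteps both obstacles by never dehomogenizing and never inducting on $s$. It passes instead to the full affine cone $\mathcal{X}^*=A_1\times\cdots\times A_s$: the preimage of $[P]$ in $\mathcal{X}^*$ has at most $d_k-1$ points (the $k$-th coordinate must lie in $A_k\setminus\{0\}$), so $g$ has at most $d_k-1$ nonzeros on $\mathcal{X}^*$. After reducing $g$ modulo $(f_{A_1},\ldots,f_{A_s})$ to a standard form $f$ with leading monomial $t^a$, the footprint bound gives $(d_1-a_1)\cdots(d_s-a_s)\leq d_k-1$, while the integer inequality of Lemma~\ref{pepe-vila-old} (with $k=m=s$) gives $(d_1-a_1)\cdots(d_s-a_s)\geq\sum_i(d_i-a_i)-(s-1)\geq d_s\geq d_k$, a direct contradiction. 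The point is that this footprint/inequality argument is uniform in $k$ and never saturates, so no edge case arises.
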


\begin{proof} Assume that $g(P)\neq 0$ and set 
$\mathcal{X}^*=A_1\times\cdots\times A_s$. One can write
the point $[P]$ in standard form 
$[P]=(0,\ldots,0,p_k,\ldots,p_s)$ with $p_k=1$. Notice that 
$V_{\mathcal{X}^*}(g)$, the zero set of $g$ in $\mathcal{X}^*$, has
cardinality equal to $|\mathcal{X}^*|-(d_k-1)$. Setting 
$f_i=\prod_{\gamma\in A_i}(t_i-\gamma)$ for $1\leq i\leq s$ and
applying \cite[Lemma~2.3]{cartesian-codes}, we get
$$
I(X^*)=(f_1,\ldots,f_s)\ \ \mbox{ and }\ \  
{\rm in}_\prec(I(\mathcal{X}^*))=(t_1^{d_1},\ldots,t_s^{d_s}).
$$
\quad Therefore, by the division
algorithm, we can write 
$$
g=h_1f_1+\cdots+h_sf_s+f,
$$
where $f(0)=0$, $\deg(f)\leq\deg(g)$, and $\deg_{t_i}(f)\leq d_i-1$
for all $i$. Thus
\begin{equation*}
|V_{\mathcal{X}^*}(f)|=|V_{\mathcal{X}^*}(g)|=d_1\cdots d_s-(d_k-1).
\end{equation*}

The initial term of $f$ has the form $t^a=t_1^{a_1}\cdots
t_s^{a_s}$, where $a_i\leq d_i-1$ for all $i$. Hence  
\begin{eqnarray*}
d_1\cdots
d_s-(d_k-1)&=&|V_{\mathcal{X}^*}(f)|=\deg(S/(I(\mathcal{X}^*),f))\\
&\leq&\deg(S/({\rm in}_\prec(I(\mathcal{X}^*),{\rm in}_\prec(f))=
\deg(S/(t_1^{d_1},\ldots, t_s^{d_s},t^a)))\\
&=&d_1\cdots d_s-(d_1-a_1)\cdots(d_s-a_s).
\end{eqnarray*}

Thus $d_k-1\geq (d_1-a_1)\cdots(d_s-a_s)$. Applying
Lemma~\ref{pepe-vila-old} with $m=k=s$, $e_i=d_i$, and $b_i=a_i$ for all
$i$, we get
\begin{eqnarray*}
(d_1-a_1)\cdots(d_s-a_s)&\geq&\sum_{i=1}^s(d_i-a_i)-(s-1)\\
&=&\sum_{i=1}^{s-1}(d_i-1)-\sum_{i=1}^sa_i+d_s\geq d_s\geq d_k.
\end{eqnarray*}
Thus $(d_1-a_1)\cdots(d_s-a_s)\geq d_k$, a contradiction.
\end{proof} 

The next result gives a uniform upper bound for the number of zeros
in a projective nested cartesian set for a family of
homogeneous polynomials of fixed degree $d$, where d is within a
certain range, 
and a corresponding lower
bound for $\delta_\mathcal{X}(d,1)$.

\begin{theorem}\label{sep10-16} Let $f$ be a polynomial of degree $d$
that does 
not vanish at all points of $\mathcal{X}$. If $d=\sum_{i=2}^{s-1}(d_i-1)+\ell$ and
$1\leq \ell\leq d_1-1 $, then
$$
|V_\mathcal{X}(f)|\leq\deg(S/I(\mathcal{X}))-(d_1-\ell+1).
$$
In particular $\delta_\mathcal{X}(d,1)\geq d_1-\ell+1\geq 2$.
\end{theorem}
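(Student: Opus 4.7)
The strategy is to pull everything back to the affine cartesian set $\mathcal{X}^{*}=A_{1}\times\cdots\times A_{s}$, where an explicit Gr\"obner basis is available, and then combine the footprint bound with Lemma~\ref{pepe-vila-old} in the style of the proof of Lemma~\ref{sep5-16}. Set $r:=|\mathcal{X}\setminus V_{\mathcal{X}}(f)|$, so that the goal becomes $r\geq d_{1}-\ell+1$, from which the stated upper bound $|V_{\mathcal{X}}(f)|=\deg(S/I(\mathcal{X}))-r\leq \deg(S/I(\mathcal{X}))-(d_{1}-\ell+1)$ follows immediately.

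List the points of $\mathcal{X}\setminus V_{\mathcal{X}}(f)$ in standard form as $[P_{1}],\ldots,[P_{r}]$, where the leading $1$ of $[P_{j}]$ sits in position $k_{j}$. The structure of a projective nested cartesian set (the same fact used implicitly in Lemma~\ref{sep5-16}) forces each $[P_{j}]$ to have exactly $d_{k_{j}}-1$ representatives in $\mathcal{X}^{*}$, and since $f$ is homogeneous its zero set is stable under nonzero rescaling. Consequently
\[
|\mathcal{X}^{*}\setminus V_{\mathcal{X}^{*}}(f)|=\sum_{j=1}^{r}(d_{k_{j}}-1)\leq r(d_{s}-1).
\]
For the reverse estimate, reduce $f$ modulo $I(\mathcal{X}^{*})=(f_{1},\ldots,f_{s})$ by the division algorithm to a remainder $\tilde f$ with $\deg_{t_{i}}(\tilde f)\leq d_{i}-1$ for every $i$ and $\deg(\tilde f)\leq d$. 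Writing $t^{a}={\rm in}_{\prec}(\tilde f)$, the footprint bound used in the proof of Lemma~\ref{sep5-16} (the affine analog of Corollary~\ref{poly-bounds-initial}) gives $|\mathcal{X}^{*}\setminus V_{\mathcal{X}^{*}}(f)|\geq \prod_{i=1}^{s}(d_{i}-a_{i})$. Invoking Lemma~\ref{pepe-vila-old} with $m=s$, $k=s-1$, $e_{i}=d_{i}$, $b_{i}=a_{i}$, and using $\sum_{i}a_{i}\leq d=\sum_{i=2}^{s-1}(d_{i}-1)+\ell$, a short simplification yields $\prod_{i=1}^{s}(d_{i}-a_{i})\geq (d_{1}-\ell)\,d_{s}$.

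Chaining the two estimates gives $r(d_{s}-1)\geq (d_{1}-\ell)d_{s}$; since $\ell\leq d_{1}-1$ implies $d_{1}-\ell\geq 1$, this forces $r>d_{1}-\ell$, hence $r\geq d_{1}-\ell+1$ as $r\in\mathbb{Z}$. Substitution then yields the bound on $|V_{\mathcal{X}}(f)|$, while the lower bound $\delta_{\mathcal{X}}(d,1)\geq d_{1}-\ell+1\geq 2$ follows at once from Theorem~\ref{rth-min-dis-vi}, together with $\ell\leq d_{1}-1$. The only subtle point is the preimage count: one must verify, using condition~(ii) of Definition~\ref{pncc} combined with the finiteness of each $A_{i}$, that multiplication by any $\lambda\in A_{k_{j}}\setminus\{0\}$ permutes $A_{i}$ for $i>k_{j}$; once this is in place the proof is a direct combination of the footprint inequality with Lemma~\ref{pepe-vila-old}.
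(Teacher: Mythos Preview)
Your argument is correct, and it follows a genuinely different route from the paper's.

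The paper proceeds by contradiction: assuming $r:=|\mathcal{X}\setminus V_{\mathcal{X}}(f)|\le d_{1}-\ell$, it multiplies $f$ by $r-1$ suitably chosen linear forms to obtain a polynomial $g$ of degree at most $\sum_{i=1}^{s-1}(d_i-1)$ that vanishes at every point of $\mathcal{X}$ except one, and then invokes Lemma~\ref{sep5-16} to reach a contradiction. Thus the paper's proof factors through the single--surviving--point case, which in turn rests on Lemma~\ref{pepe-vila-old} with $k=m=s$.

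Your approach bypasses the linear-form construction and Lemma~\ref{sep5-16} entirely. You lift to the affine cartesian set $\mathcal{X}^{*}$, exploit the closure of each $A_i$ under division by nonzero elements of smaller $A_j$'s (Definition~\ref{pncc}(ii)) to count preimages exactly and obtain the upper bound $|\mathcal{X}^{*}\setminus V_{\mathcal{X}^{*}}(f)|\le r(d_s-1)$, and then combine the footprint inequality with Lemma~\ref{pepe-vila-old} at $k=s-1$ (rather than $k=s$) to get the lower bound $(d_1-\ell)d_s$. The strict inequality $r(d_s-1)\geq (d_1-\ell)d_s>(d_1-\ell)(d_s-1)$ then forces $r\geq d_1-\ell+1$.

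Both proofs ultimately rest on Lemma~\ref{pepe-vila-old}. Your version is more direct and keeps the argument self-contained, at the cost of verifying the preimage count (which, as you correctly note, uses that the map $x\mapsto x/\lambda$ is an injective self-map of a finite set, hence a bijection). The paper's version is more modular, isolating the ``no polynomial of low degree has a single nonzero value on $\mathcal{X}$'' statement as a separate lemma.
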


\begin{proof} We proceed by contradiction assuming that
$|V_\mathcal{X}(f)|\geq |\mathcal{X}|-(d_1-\ell)$. If $n$ is the 
number of elements of $\mathcal{X}\setminus V_\mathcal{X}(f)$, then 
$n\leq d_1-\ell$. Let $[P_1],\ldots,[P_n]$ be the points of 
$\mathcal{X}\setminus V_\mathcal{X}(f)$. For each $1\leq i\leq n-1$, 
pick $h_i\in S_1$ such that $h_i(P_i)=0$ and $h_i(P_n)\neq 0$. 
Setting $g=fh_1\cdots h_{n-1}$, one has 
$$
\deg(g)=(n-1)+d=(n-1)+\sum_{i=2}^{s-1}(d_i-1)+\ell<
d_1+\sum_{i=2}^{s-1}(d_i-1),
$$
$g(P_n)\neq 0$, and $g$ vanishes at all
points of $\mathcal{X}\setminus\{[P_n]\}$. This contradicts 
Lemma~\ref{sep5-16}. 
\end{proof}

\section{Examples}\label{examples-section}

In this section we show some examples that illustrate how some
of our results can be used in practice. In particular we give a
counterexample to Conjecture~\ref{carvalho-lopez-lopez-conjecture}. 

\begin{example}\label{counterexample-nested-2-2-4}\rm
Let $K$ be the field $\mathbb{F}_{4}$, let $\mathcal{X}$ be the 
projective nested cartesian set
\begin{center}
$\mathcal{X}=[A_1\times A_2 \times A_3]\subset\mathbb{P}^2$
\end{center} 
where $A_1=\{0,1\}, A_2=\{0,1\}, A_3=\mathbb{F}_4$, and let 
$I=I(\mathcal{X})$ be the vanishing ideal of $\mathcal{X}$. The ideal
$I$ is generated by
$t_{1}t_2^2-t_1^{2}t_2,\, t_1t_3^4-t_1^4t_3,\, t_2t_3^4-t_2^4t_3$,
${\rm reg}(S/I)=5$, and $\deg(S/I)=13$. This
follows from \cite[Lemma~2.4]{carvalho-lopez-lopez}. 
Using Procedure~\ref{procedure-min-dis-fooprint}, we obtain 
the following table with the basic parameters of $C_\mathcal{X}(d)$:

\begin{eqnarray*}
\hspace{-11mm}&&\left.
\begin{array}{c|c|c|c|c|c|c}
d & 1 & 2 & 3 & 4 & 5 &\cdots\\
   \hline
 |\mathcal{X}| & 13 & 13 & 13 & 13 & 13 & \cdots
 \\ 
   \hline
 H_\mathcal{X}(d)    \    & 3 & 6  & 9 & 12 & 13 &\cdots\\ 
   \hline
 \delta_{\mathcal{X}}(d,1) & 8 & 4 & 3 & 1 & 1 &\cdots \\ 
\hline
 {\rm fp}_{I(\mathcal{X})}(d,1) & 8 & 4 & 3 & 1 & 1 &\cdots \\ 
\end{array}
\right.
\end{eqnarray*}

A direct way to see that $\delta_\mathcal{X}(4,1)=1$ is to observe 
that the polynomial $f=t_3(t_3^3-t_2^3-t_1^3+t_1^2t_2) $ vanishes at all points
of $\mathcal{X}\setminus\{[e_3]\}$ and $f(e_3)=1$, where
$e_3=(0,0,1)$. The ideal $I$ is Geil-Carvalho \cite{min-dis-ci}, 
that is, $\delta_\mathcal{X}(d,1)={\rm fp}_{I(\mathcal{X})}(d,1)$ 
for $d\geq 1$.  Using the notation of 
Conjecture~\ref{carvalho-lopez-lopez-conjecture}, 
the values of $\delta_{\mathcal{X}}(d,1)$ according to this conjecture are
given by the following table:  
\begin{eqnarray*}
\hspace{-11mm}&&\left.
\begin{array}{c|c|c|c|c|c|c}
d & 1 & 2 & 3 & 4 & 5 &\cdots\\
   \hline
 k & 0 & 1 & 1 & 1 & &
 \\ 
   \hline
 \ell    \    & 1 & 1 & 2 & 3 & &\\ 
   \hline
 \delta_{\mathcal{X}}(d,1) & 8 & 4 & 3 & 2 & 1 & \cdots \\ 
\end{array}
\right.
\end{eqnarray*}
\quad Thus the conjecture fails in degree
$d=4$. 
\end{example}

\begin{example}\label{example-computing-points}\rm 
Let $K$ be the field $\mathbb{F}_{4}$, let $\mathcal{X}$ be the 
projective nested cartesian set
\begin{center}
$\mathcal{X}=[A_1\times A_2 \times A_3]\subset\mathbb{P}^2$, 
\end{center} 
where $A_1=\{0,1\}, A_2=\{0,1\}, A_3=\mathbb{F}_4$ and let 
$I=I(\mathcal{X})$ be the vanishing ideal of $\mathcal{X}$. 
Consider the following pairs
of polynomials $F=\{f_1,f_2\}$ of degree $d$: 
$$
\begin{array}{llll}
\mbox{Case } d=1\colon&\mbox{Case } d=2\colon&\mbox{Case } d=3\colon&\mbox{Case }
d=4\colon\cr
f_1=t_1-t_2,&f_1=(t_1-t_2)(t_1-t_3),&f_1=(t_1-t_2)(t_1-t_3)t_2,&f_1=(t_1-t_2)(t_1-t_3)t_2^2,\cr
f_2=t_1-t_3.&f_2=(t_1-t_2)t_2.&f_2=(t_1-t_2)t_2^2.&f_2=(t_1-t_2)(t_2-t_3)t_2t_3.
\end{array}
$$
\quad If $V_\mathcal{X}(F)$ is 
the variety in $\mathcal{X}$ defined by $F=\{f_1,f_2\}$, using 
Procedure~\ref{procedure-degree-formula-for-the-number-of-zeros-proj}
we obtain: 
$$
\begin{array}{ll}
\mbox{Case } d=1\colon\  |V_\mathcal{X}(F)|=\deg(S/(I,F))=1,&\mbox{Case }
d=2\colon\
|V_\mathcal{X}(F)|=\deg(S/(I,F))=6,\cr
\mbox{Case } d=3\colon\ |V_\mathcal{X}(F)|=\deg(S/(I,F))=9,&\mbox{Case }
d=4\colon\ 
|V_\mathcal{X}(F)|=\deg(S/(I,F))=10.
\end{array}
$$
\end{example}

\begin{example}\label{footprint-matrix} Let $({\rm
fp}_{I(\mathcal{X})}(d,r))$ and 
$(\delta_{\mathcal{X}}(d,r))$ be the {\it footprint
matrix\/} and the {\it weight matrix} of the ideal $I(\mathcal{X})$ of 
Example~\ref{counterexample-nested-2-2-4}. 
These matrices are of size $5\times 13$ because the regularity and the
degree of $S/I(\mathcal{X})$ are $5$ and $13$, respectively. Using 
Procedure~\ref{procedure-footprint-matrix} we obtain: 
$$
({\rm fp}_{I(\mathcal{X})}(d,r))=\left[\begin{array}{ccccccccccccc}
8& 12 & 13 & \infty & \infty & \infty & \infty & \infty& \infty& \infty&\infty& \infty&\infty\\
4& 7 & 8 & 11 & 12 & 13 & \infty  & \infty & \infty & \infty &\infty & \infty &\infty \\
3& 4 & 6 & 7 & 8 & 10 & 11 & 12& 13& \infty &\infty & \infty &\infty \\
1 & 3& 4 & 5 & 6 & 7 & 8 & 9& 10& 11&12& 13&\infty \\
1& 2 & 3 &4 & 5 & 6 & 7 & 8& 9& 10&11& 12&13
\end{array}\right].
$$

\quad If $r>H_\mathcal{X}(d)$, then $\mathcal{M}_{\prec,
d,r}=\emptyset$ and the  $(d,r)$-entry of this matrix is
equal to $|\mathcal{X}|$, but in this case we prefer to write $\infty$ for 
computational reasons. Therefore, by
Theorem~\ref{rth-footprint-lower-bound}, we obtain $({\rm
fp}_{I(\mathcal{X})}(d,r))\leq(\delta_\mathcal{X}(d,r))$. By 
Example~\ref{counterexample-nested-2-2-4} one has ${\rm
fp}_{I(\mathcal{X})}(d,1)=\delta_\mathcal{X}(d,1)$ for $d\geq 1$ and
by Example~\ref{example-computing-points} it follows readily that ${\rm
fp}_{I(\mathcal{X})}(d,2)=\delta_\mathcal{X}(d,2)$ for $d\geq 1$.
Using that the generalized footprint is a lower bound for the
generalized Hamming weight we have verified that ${\rm
fp}_{I(\mathcal{X})}(d,r)$ is equal to $\delta_\mathcal{X}(d,r)$ for all $d,r$.
\end{example}

\begin{example}\label{dec4-15}
Let $\mathcal{X}=\mathbb{P}^2$ be the projective space over 
the field $\mathbb{F}_2$.  The vanishing ideal 
$I=I(\mathcal{X})$ is the ideal of $S=\mathbb{F}_2[t_1,t_2,t_3]$ generated by the binomials 
$t_1t_2^2-t_1^2t_2,\, t_1t_3^2-t_1^2t_3,\, t_2^2t_3-t_2t_3^2$ 
\cite[Corollaire~2.1]{mercier-rolland}. If $S$ has the GRevLex order
$\prec$, 
adapting Procedure~\ref{procedure-min-dis-fooprint} we get 
\begin{eqnarray*}
\hspace{-11mm}&&\left.
\begin{array}{c|c|c|c|c}
d & 1 & 2 & 3 & \cdots\\
   \hline
{\deg}(S/I) & 7 & 7 & 7 &\cdots
 \\ 
   \hline
 H_I(d)    \    & 3 & 6  & 7& 
 \cdots\\ 
   \hline
 \delta_\mathcal{X}(d,1) &4& 2& 1& \cdots\\ 
 \hline
 {\rm fp}_I(d,1) &4& 1& 1& \cdots\\ 
\end{array}
\right.
\end{eqnarray*}
\end{example}

\section{Procedures for {\it Macaulay\/}$2$}\label{procedures-section} 
In this section we present the procedures used to compute the examples
of Section~\ref{examples-section}. Some of these procedures work for
graded ideals and provide a tool to study 
generalized minimum distance functions.

\begin{procedure}\label{procedure-min-dis-fooprint} Computing the
minimum distance  and
the footprint with {\it Macaulay\/}$2$ \cite{mac2} using Theorem~\ref{rth-min-dis-vi} and
Proposition~\ref{march27-17}. The next procedure corresponds to
Example~\ref{counterexample-nested-2-2-4}.
\begin{verbatim}
q=4
G=GF(q,Variable=>a)
S=G[t3,t2,t1,MonomialOrder=>Lex]
I=ideal(t1*t2^2-t1^2*t2,t1*t3^4-t1^4*t3,t2^4*t3-t2*t3^4)
M=coker gens gb I, degree M, regularity M
init=ideal(leadTerm gens gb I)
H=(d)->hilbertFunction(d,M), apply(1..regularity(M),H)
h=(d)->degree M - max apply(apply(apply(apply(
toList (set(0,a,a^2,a^3))^**(hilbertFunction(d,M))-(set{0})^**(
hilbertFunction(d,M)), toList),x->basis(d,M)*vector x),
z->ideal(flatten entries z)), x-> if not quotient(I,x)==I then degree
ideal(I,x) else 0)--h(d) is the minimum distance in degree d
apply(1..regularity(M)-1,h)
f=(x)-> if not quotient(init,x)==init then degree ideal(init,x) else 0
fp=(d) ->degree M -max apply(flatten entries basis(d,M),f)
--fp(d) is the footprint in degree d
apply(1..regularity(M),fp)
f=t3*(t3^3-t2^3-t1^3+t1^2*t2), degree ideal(I,f)
\end{verbatim}
\end{procedure}

\begin{procedure}\label{procedure-degree-formula-for-the-number-of-zeros-proj}
Computing the number of solutions of a system of 
homogeneous polynomials over any given set of
projective points over a finite field with 
 {\it Macaulay\/}$2$ \cite{mac2} using the degree formula of 
Lemma~\ref{rth-min-dis-vi}. The next procedure corresponds to
Example~\ref{example-computing-points}.  
\begin{verbatim}
q=4
G=GF(q,Variable=>a)
S=G[t3,t2,t1,MonomialOrder=>Lex]
I=ideal(t1^2*t2-t1*t2^2,t1*t3^4-t1^4*t3,t2^4*t3-t2*t3^4)
f1=t1-t2, f2=t1-t3, quotient(I,ideal(f1,f2))==I
degree (I+ideal(f1,f2))
f1=(t1-t2)*(t1-t3), f2=(t1-t2)*t2, quotient(I,ideal(f1,f2))==I
degree (I+ideal(f1,f2))
f1=(t1-t2)*(t1-t3)*t2, f2=(t1-t2)*t2^2, quotient(I,ideal(f1,f2))==I
degree (I+ideal(f1,f2))
f1=(t1-t2)*(t1-t3)*t2^2, f2=(t1-t2)*(t2-t3)*t2*t3
quotient(I,ideal(f1,f2))==I
degree (I+ideal(f1,f2))
\end{verbatim}
\end{procedure}

\begin{procedure}\label{procedure-footprint-matrix} Computing the 
footprint matrix with {\it Macaulay\/}$2$ \cite{mac2}. This procedure
corresponds to Example~\ref{footprint-matrix}. It can be 
applied to any vanishing ideal $I$ to obtain the entries of the 
matrix $({\rm fp}_{I}(d,r))$ and is reasonably fast.  
\begin{verbatim}
q=4
G=GF(q,Variable=>a)
S=G[t3,t2,t1,MonomialOrder=>Lex]
I=ideal(t1^2*t2-t1*t2^2, t1*t3^4-t1^4*t3, t2^4*t3-t2*t3^4)
M=coker gens gb I, regularity M, degree M
init=ideal(leadTerm gens gb I)
er=(x)-> if not quotient(init,x)==init then degree ideal(init,x) else 0
fpr=(d,r)->degree M - max apply(apply(apply(
subsets(flatten entries basis(d,M),r),toSequence),ideal),er)
g=(r)->apply(sort toList(set(1..regularity(M))**set{r}),fpr)
--g(r) is the r-th column of the footprint matrix 
\end{verbatim}
\end{procedure}

\section{An integer inequality}

For $a:a_1,\ldots,a_m$ and $b:b_1,\ldots,b_m$ sequences in
$\Z^+=\{1,2,\ldots\}$ we
define
\[\pi(a,b):=\prod_{i=1}^ma_i+\prod_{i=1}^mb_i-\prod_{i=1}^m\min(a_i,b_i).\]
\begin{lemma}\label{lemma1-pepe} Let $a_1,a_2,b_1,b_2\in\Z^+$.
Set $a_1'=\min(a_1,a_2)$ and $a_2'=\max(a_1,a_2)$. Then
\[\min(a_1,b_1)\min(a_2,b_2)\leq\min(a_1',b_1')\,\min(a_2',b_2').\]
\end{lemma}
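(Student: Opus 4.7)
The plan is to reduce the statement to a single canonical inequality about two already-sorted pairs and then dispatch it by a short case analysis. First I would note that the definitions for $b_1'$, $b_2'$ (which are missing from the statement) must be $b_1' = \min(b_1,b_2)$ and $b_2' = \max(b_1,b_2)$; with any other convention (for instance $b_i' = b_i$) the inequality fails, as a small numerical check confirms. With this reading, the right-hand side depends only on the unordered pairs $\{a_1,a_2\}$ and $\{b_1,b_2\}$.

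Next, since the right-hand side is unchanged under the swaps $a_1 \leftrightarrow a_2$ and $b_1 \leftrightarrow b_2$, after performing these swaps if necessary I may assume $a_1 \le a_2$ and $b_1 \le b_2$. Under these swaps the left-hand side either remains $\min(a_1,b_1)\min(a_2,b_2)$ (in which case the two sides of the lemma coincide and there is nothing to prove) or becomes the ``crosswise'' product $\min(a_1,b_2)\min(a_2,b_1)$. Thus the whole lemma reduces to proving
\[
\min(a_1, b_2)\min(a_2, b_1) \;\leq\; \min(a_1, b_1)\min(a_2, b_2)
\qquad \text{when } a_1 \le a_2 \text{ and } b_1 \le b_2.
\]

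To prove this reduced inequality I would split into the six orderings of $\{a_1,a_2,b_1,b_2\}$ compatible with $a_1 \le a_2$ and $b_1 \le b_2$, namely
\[
a_1\!\le\!a_2\!\le\!b_1\!\le\!b_2,\quad a_1\!\le\!b_1\!\le\!a_2\!\le\!b_2,\quad a_1\!\le\!b_1\!\le\!b_2\!\le\!a_2,
\]
\[
b_1\!\le\!a_1\!\le\!a_2\!\le\!b_2,\quad b_1\!\le\!a_1\!\le\!b_2\!\le\!a_2,\quad b_1\!\le\!b_2\!\le\!a_1\!\le\!a_2.
\]
In each case the four minima collapse to explicit values among $a_1,a_2,b_1,b_2$, and the claim reduces to one of the three comparisons already in force: $a_1 \le a_2$, $b_1 \le b_2$, or the interleaving relation $a_i \le b_j$ defining the case. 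For instance, in the case $a_1 \le b_1 \le a_2 \le b_2$ the inequality becomes $a_1 b_1 \le a_1 a_2$, which is just $b_1 \le a_2$; in the extreme case $a_1 \le a_2 \le b_1 \le b_2$ both sides equal $a_1 a_2$.

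The main obstacle is purely bookkeeping: making sure that all six cases are covered and that each collapse of the minima is evaluated correctly. There is no conceptual difficulty, and a slicker alternative (a log-supermodularity argument for $\min$ on $\mathbb{R}_{>0}^2$) does not save effort given how few cases there are, so I would prefer the direct case analysis above.
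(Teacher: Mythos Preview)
Your reading of the missing definitions $b_1'=\min(b_1,b_2)$, $b_2'=\max(b_1,b_2)$ is correct (this is confirmed by how the lemma is invoked in the proof of Lemma~\ref{lemma2-pepe}(i)), and your argument is valid. The paper's own proof is the same case-by-case strategy, only without your preliminary symmetry reduction: it simply states that one checks all $4!$ orderings of $a_1,a_2,b_1,b_2$ directly.
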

\begin{proof}
It is an easy case-by-case verification of $4!$ possible cases.
\end{proof}

\begin{lemma}\label{lemma2-pepe} Let $a:a_1,\ldots,a_m$ and
$b:b_1,\ldots,b_m$ be sequences in 
$\Z^+$. Suppose:
\begin{itemize}
 \item[$(i)$] $r<s$, $a_r>a_s$. Set $a_r'=a_s$,
$a_s'=a_r$, $a_i'=a_i$ for $i\neq r,s$; and $b_r'=\min(b_r,b_s)$,
$b_s'=\max(b_r,b_s)$, $b_i'=b_i$ for $i\neq r,s$. Then $\pi(a,b)\geq
\pi(a',b')$.
 \item[$(ii)$] $r<s$, $b_r=a_r\leq a_s<b_s$. Set $a_r'=a_r-1$,
$a_s'=a_s+1$, $a_i'=a_i$ for $i\neq r,s$. Then $\pi(a,b)\geq\pi(a',b)$.
 \item[$(iii)$] $r<s$, $b_r<a_r\leq a_s$. Set $a_r'=a_r-1$,
$a_s'=a_s+1$, $a_i'=a_i$ for $i\neq r,s$. Then $\pi(a,b)\geq\pi(a',b)$.
 \item[$(iv)$] $r<s$, $a_r<a_s$, $b_r=a_s, b_s=a_r$,
$b_i=a_i$ for $i\neq r,s$, $h:=a_s-a_r\geq 2$. Set $b_r'=a_r+1$, $b_s'=a_s-1$,
$b_i'=a_i$ for $i\neq r,s$. Then $\pi(a,b)\geq \pi(a,b')$.
\end{itemize}
\end{lemma}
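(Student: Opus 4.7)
The overall plan is to compute the difference $\pi(a,b)-\pi(a',b')$ directly in each of the four parts. Since $(a,b)$ and $(a',b')$ agree outside the two coordinates $r$ and $s$, each of the three products $\prod a_i$, $\prod b_i$, $\prod\min(a_i,b_i)$ factors as a common product over $i\ne r,s$ multiplied by a two-variable term in the $(r,s)$-coordinates. The verification thus reduces in every case to a short algebraic identity at positions $r,s$ together with elementary monotonicity of $\min$.

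For part (i), $a'$ is a permutation of $a$ and $b'$ is a permutation of $b$, so $\prod a_i=\prod a_i'$ and $\prod b_i=\prod b_i'$; hence $\pi(a,b)-\pi(a',b')=\prod_i\min(a_i',b_i')-\prod_i\min(a_i,b_i)$. By construction $(a_r',a_s')=(a_s,a_r)$ is increasing and $(b_r',b_s')=(\min(b_r,b_s),\max(b_r,b_s))$ is increasing, so both sequences are co-sorted at the pair $(r,s)$. Lemma~\ref{lemma1-pepe}, applied to the coordinates $r,s$ with all other coordinates held fixed, then gives $\min(a_r,b_r)\min(a_s,b_s)\le\min(a_r',b_r')\min(a_s',b_s')$, which is the required inequality.

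Parts (ii) and (iii) share the property $b=b'$, and both start from the identity $a_ra_s-(a_r-1)(a_s+1)=a_s-a_r+1$, which yields
\[
\prod_i a_i-\prod_i a_i'=(a_s-a_r+1)\prod_{i\ne r,s}a_i\ge 0.
\]
In case (ii), the hypotheses $b_r=a_r$ and $a_s<b_s$ (so $a_s+1\le b_s$) force the min values at $r,s$ to be $a_r,a_s$ before the change and $a_r-1,a_s+1$ after, producing the parallel identity
\[
\prod_i\min(a_i,b_i)-\prod_i\min(a_i',b_i)=(a_s-a_r+1)\prod_{i\ne r,s}\min(a_i,b_i),
\]
so $\pi(a,b)-\pi(a',b)=(a_s-a_r+1)\bigl[\prod_{i\ne r,s}a_i-\prod_{i\ne r,s}\min(a_i,b_i)\bigr]\ge 0$. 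In case (iii), $b_r<a_r$ implies $b_r\le a_r-1$, so $\min(a_r',b_r)=b_r=\min(a_r,b_r)$, while $\min(a_s',b_s)\ge\min(a_s,b_s)$ because $\min$ is weakly increasing in each argument; thus $\prod_i\min(a_i',b_i)\ge\prod_i\min(a_i,b_i)$, and combined with $\prod_i a_i\ge\prod_i a_i'$ this gives $\pi(a,b)\ge\pi(a',b)$.

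For part (iv), $a=a'$, and the hypotheses $b_r=a_s$, $b_s=a_r$, $b_r'=a_r+1$, $b_s'=a_s-1$, together with $a_r<a_s$, make the relevant min values $\min(a_r,b_r)=a_r=\min(a_r,b_r')$, $\min(a_s,b_s)=a_r$, and $\min(a_s,b_s')=a_s-1$. The algebraic identity $a_ra_s-(a_r+1)(a_s-1)=-(h-1)$ gives $\prod_i b_i-\prod_i b_i'=-(h-1)\prod_{i\ne r,s}a_i$, and the analogous min calculation produces
\[
\prod_i\min(a_i,b_i)-\prod_i\min(a_i,b_i')=-a_r(h-1)\prod_{i\ne r,s}a_i.
\]
Assembling these, the whole difference collapses to $\pi(a,b)-\pi(a,b')=(h-1)(a_r-1)\prod_{i\ne r,s}a_i\ge 0$, using $a_r\ge 1$ and $h\ge 2$. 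The only real bookkeeping hazard throughout is keeping signs consistent in the three-term expression $\pi=\prod a+\prod b-\prod\min$; once the difference is put into the canonical factored form above, each of the four parts reduces to a one-line positivity check.
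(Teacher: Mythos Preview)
Your proof is correct and follows essentially the same approach as the paper: in each of the four cases you factor out the common product over $i\neq r,s$ and reduce to a two-coordinate verification, invoking Lemma~\ref{lemma1-pepe} for part~(i) and direct algebraic identities for parts~(ii)--(iv). Your treatment of part~(iii) is slightly more streamlined than the paper's (you observe that both summands in the difference are separately nonnegative rather than writing out the explicit coefficient), but the argument is the same in substance.
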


\noindent {\it Proof.} We verify all cases by direct substitution of
$a'$ and $b'$ into $\pi(a,b)$.
\begin{eqnarray*}
&&\hspace{-1cm}(i)\ \ \pi(a,b)-\pi(a',b')=\prod a_i+\prod b_i-\prod a_i'-\prod
b_i'+\prod\min(a_i',b_i')
-\prod\min(a_i,b_i)\\
&=&(\min(a_r',b_r')\min(a_s',b_s')- \min(a_r,b_r)\min(a_s,b_s))\prod_{i\neq
r,s}\min(a_i,b_i)\geq 0.\ \ (Lemma~\ref{lemma1-pepe})
\end{eqnarray*}
\begin{eqnarray*}
&&\hspace{-1.5cm}(ii)\ \ \pi(a,b)-\pi(a',b)=\prod a_i-\prod
a_i'+\prod\min(a_i',b_i)
-\prod\min(a_i,b_i)\\
&=&(a_ra_s-(a_r-1)(a_s+1))\prod_{i\neq
r,s}a_i\\
&&+(\min(a_r',b_r)\min(a_{s}',b_{s})-\min(a_r,b_r)\min(a_{s},b_{s}))\prod_{i\neq
r,s}\min(a_i,b_i)\\
&=&(a_s-a_r+1)\prod_{i\neq r,s}a_i+((a_r-1)(a_s+1)-a_ra_s)\prod_{i\neq
r,s}\min(a_i,b_i)\\
&=&(a_s-a_r+1)\left(\prod_{i\neq r,s}a_i-\prod_{i\neq
r,s}\min(a_i,b_i)\right)\geq 0.
\end{eqnarray*}
\begin{eqnarray*}
&&\hspace{-1.5cm}(iii)\ \ \pi(a,b)-\pi(a',b)=\prod a_i-\prod
a_i'+\prod\min(a_i',b_i)
-\prod\min(a_i,b_i)\\
&=&(a_ra_s-(a_r-1)(a_s+1))\prod_{i\neq
r,s}a_i\\
&&+(\min(a_r',b_r)\min(a_{s}',b_{s})-\min(a_r,b_r)\min(a_{s},b_{s}))\prod_{i\neq
r,s}\min(a_i,b_i)\\
&=&(a_s-a_r+1)\prod_{i\neq
r,s}a_i+b_r(\min(a_s+1,b_s)-\min(a_s,b_s))\prod_{i\neq r,s}\min(a_i,b_i)\geq 0.
\end{eqnarray*}
For the last inequality note that $\min(a_s+1,b_s)-\min(a_s,b_s)=0$ or $1$.
\begin{eqnarray*}
(iv)\ \ \pi(a,b)-\pi(a,b')&=&\prod b_i-\prod b_i'+\prod\min(a_i,b_i')-\prod\min(a_i,b_i)\\
&=&(a_ra_s-(a_r+1)(a_s-1)+a_r(a_s-1)-a_r^2)\prod_{i\neq
r,s}a_i\\
&=&(a_r-1)(h-1)\prod_{i\neq r,s}a_i\geq 0. \qed
\end{eqnarray*}

\begin{lemma}\label{basic-ineq}
If $a_1,\ldots,a_r$ are positive integers, then
$a_1\cdots a_r\geq(a_1+\cdots+a_r)-(r-1)$.
\end{lemma}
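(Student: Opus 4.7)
The plan is to prove this by induction on $r$, using as the key sub-inequality the elementary observation that for positive integers $x,y$ one has $xy \geq x + y - 1$, which is just the rearrangement of $(x-1)(y-1) \geq 0$.

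The base case $r=1$ is immediate since $a_1 \geq a_1$. For the inductive step, I would assume $a_1 \cdots a_{r-1} \geq (a_1 + \cdots + a_{r-1}) - (r-2)$ and apply the sub-inequality above with $x = a_1 \cdots a_{r-1}$ and $y = a_r$ (both are positive integers). This yields
\[
a_1 \cdots a_r = x y \geq x + y - 1 \geq \bigl((a_1 + \cdots + a_{r-1}) - (r-2)\bigr) + a_r - 1 = (a_1 + \cdots + a_r) - (r-1),
\]
which is exactly the desired bound.

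There is no real obstacle here; the only thing to be careful about is verifying the sub-inequality $xy \geq x+y-1$ for positive integers, which follows at once from $(x-1)(y-1) \geq 0$ since both factors are nonnegative. Alternatively, one could give a direct inductive argument by setting $S = a_1 + \cdots + a_{r-1}$, noting $S \geq r-1$, and checking that $S(a_r - 1) \geq (r-1)(a_r - 1)$, but the two-line approach via $(x-1)(y-1) \geq 0$ is cleaner and avoids any case split on whether $a_r = 1$.
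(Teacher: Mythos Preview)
Your proof is correct and follows the same approach as the paper, which simply states ``It follows by induction on $r$.'' You have merely filled in the details of that induction, using the elementary inequality $(x-1)(y-1)\geq 0$ for the inductive step.
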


\begin{proof} It follows by induction on $r$. 
\end{proof}

\begin{lemma}\label{lemma3-pepe} 
Let $1\leq e_1\leq\cdots\leq e_m$ and $1\leq a_i,b_i\leq e_i$,
for $i=1,\ldots,m$ be integers. Suppose $a_i=b_i=1$ for $i<r$, $a_i=b_i=e_i$ for
$i>r+1:=s$, $1\leq a_i,b_i\leq e_i$ for $i=r,s$, with $a_r+a_s=b_r+b_s$ and
$(a_r,a_s)\neq(b_r,b_s)$. If $b_r\leq a_s$ and $b_s=a_s-1$, then
\begin{equation}\label{pepe-ineq1}
\pi(a,b)\geq
\left(\sum_{i=1}^m a_i-\sum_{i=k+1}^me_i-(k-2)\right)e_{k+1}\cdots
e_m-e_{k+2}\cdots e_m
\end{equation}
for $i=1,\ldots,m-1$, where $e_{k+2}\cdots
e_m=1$ when $k=m-1$.
\end{lemma}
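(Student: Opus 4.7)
The rigid hypotheses collapse $(a,b)$ substantially: from $a_r+a_s=b_r+b_s$ and $b_s=a_s-1$ one gets $b_r=a_r+1$, so $a$ and $b$ differ only in the two adjacent coordinates $r$ and $s=r+1$. Consequently $\min(a_r,b_r)=a_r$, $\min(a_s,b_s)=a_s-1$, and $\min(a_i,b_i)=a_i$ elsewhere, so a direct substitution gives
\[
\pi(a,b)=\bigl(a_ra_s+(a_r+1)(a_s-1)-a_r(a_s-1)\bigr)\prod_{i>s}e_i=(a_ra_s+a_s-1)\prod_{i>s}e_i.
\]
Denote the right-hand side of (\ref{pepe-ineq1}) by $R(k)$, and set $A_k:=\sum_i a_i-\sum_{i>k}e_i-(k-2)$. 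The plan is to verify $\pi(a,b)\geq R(k)$ at the two \emph{central} values $k=r$ and $k=s$ by hand, and then to reduce every other $k$ to these by monotonicity of $R$.

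For $k=s$, cancelling $\prod_{i>s}e_i$ reduces $\pi(a,b)\geq R(s)$ to $(a_ra_s-a_r-1)\prod_{i\geq s+1}e_i\geq -\prod_{i\geq s+2}e_i$, which is clear since $a_r(a_s-1)\geq 1$ (using $a_r\geq 1$ and $a_s\geq 2$, the latter from $b_s=a_s-1\geq 1$). For $k=r$, an analogous simplification reduces $\pi(a,b)\geq R(r)$ to the single inequality $(a_s-e_s)(a_r+1-e_s)\geq 0$; both factors are non-positive since $a_s\leq e_s$ by hypothesis and $a_r+1=b_r\leq e_r\leq e_s$.

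For the remaining values of $k$, a telescoping calculation using the identity $A_{k+1}=A_k+e_{k+1}-1$ yields
\[
R(k+1)-R(k)=(1-A_k)(e_{k+1}-1)\,e_{k+2}\cdots e_m+(e_{k+2}-1)\,e_{k+3}\cdots e_m.
\]
When $k<r$, the explicit form of $A_k$ together with $e_i\geq 1$ shows $A_k\leq A_{r-1}$, and $A_{r-1}=2+a_r+a_s-e_r-e_s\leq 1$ since $a_r\leq e_r-1$ (from $b_r\leq e_r$) and $a_s\leq e_s$; both summands above are then non-negative, hence $R(k)\leq R(r)$. When $k\geq s$, one has $A_k\geq A_s=a_r+a_s\geq 3$ and $e_{k+1}-1\geq e_s-1\geq a_s-1\geq 1$, so $(A_k-1)(e_{k+1}-1)\geq 2$, which forces $R(k+1)-R(k)\leq 0$ and hence $R(k)\leq R(s)$. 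Combining yields $\pi(a,b)\geq\max\{R(r),R(s)\}\geq R(k)$ for every $1\leq k\leq m-1$. The main bookkeeping obstacle is handling the empty-product conventions in the degenerate cases $r=1$ or $s=m$, but in each such case the relevant portion of the range of $k$ collapses so that only one of the two central inequalities remains to be checked.
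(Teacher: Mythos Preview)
Your proof is correct and takes a genuinely different route from the paper's. Both arguments start identically, deriving the closed form $\pi(a,b)=(a_ra_s+a_s-1)\prod_{i>s}e_i$ (this is the paper's Eq.~(\ref{pepe-ineq2})). After that point the approaches diverge.

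The paper treats the parameter $k$ by a four-way case split on the position of $k+1$ relative to $r$ and $s$: for $k+1<r$ it shows $\sigma\leq 1$ and, when $\sigma=1$, forces $e_{k+1}=1$ so the right-hand side vanishes; for $k+1=r$ and $k+1=r+1$ it computes directly; for $k+1>r+1$ it invokes the auxiliary inequality of Lemma~\ref{basic-ineq}. Your argument instead verifies the inequality only at the two ``central'' values $k=r$ and $k=s$ (your $k=r$ computation, reducing to $(a_s-e_s)(a_r+1-e_s)\geq 0$, is identical to the paper's case $k+1=r+1$), and then disposes of all other $k$ simultaneously by showing that $R(k)$ is non-decreasing on $\{1,\ldots,r\}$ and non-increasing on $\{s,\ldots,m-1\}$ via the telescoping identity $R(k+1)-R(k)=(1-A_k)(e_{k+1}-1)e_{k+2}\cdots e_m+(e_{k+2}-1)e_{k+3}\cdots e_m$. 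The monotonicity is driven by the sign of $1-A_k$: for $k\leq r-1$ you bound $A_k\leq A_{r-1}=2+a_r+a_s-e_r-e_s\leq 1$ using $a_r+1=b_r\leq e_r$; for $k\geq s$ you bound $A_k\geq A_s=a_r+a_s\geq 3$ and $e_{k+1}\geq e_s\geq a_s\geq 2$, which makes the negative term dominate. This is a cleaner organization: it replaces three of the paper's four cases by a single monotonicity observation, and in particular it does not need Lemma~\ref{basic-ineq} at all. The trade-off is that you must track the empty-product conventions at $s=m$ and $r=1$, which you correctly note collapse the argument to a single central value.
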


\begin{proof} Set $\sigma=\sum_{i=1}^m a_i-\sum_{i=k+1}^me_i-(k-2)$. 
Since $b_s(b_r-a_r)=a_s-1$, one has the equality
\begin{equation}\label{pepe-ineq2}
\pi(a,b)=(a_ra_s+b_rb_s-a_rb_s)\prod_{i=r+2}^me_i=(a_ra_s+a_s-1)\prod_{i=r+2}^me_i.
\end{equation}

Case $k+1<r$: The integer $\sigma$ can be rewritten as 
\[\sigma=k+(1-e_{k+1})+\cdots+(1-e_{r-1})+(a_r-e_r)+(a_s-e_s)-(k-2).\]
\quad Since $a_r<b_r\leq e_r$, it holds that $a_r-e_r\leq -1$, and hence $\sigma\leq
1$. If $\sigma\leq 0$, Eq.~(\ref{pepe-ineq1}) trivially follows (because the left hand side
is positive and the right hand side would be negative). So we may assume
$\sigma=1$. This assumption implies that $e_{k+1}=1$ because
$a_r<b_r\leq e_r$. Then the right hand side
of Eq.~(\ref{pepe-ineq1}) is
\[(\sigma)e_{k+1}\cdots e_m-e_{k+2}\cdots e_m=(e_{k+1}-1)e_{k+2}\cdots e_r=0.\]

\smallskip 

Case $k+1=r$: The integer $ \sigma$ can be rewritten as
\[\sigma=k+(a_r-e_r)+(a_s-e_s)-(k-2).\]
By the same reason as above, we may assume $\sigma=1$. This assumption implies
$a_r=e_r-1$ and $a_s=e_s$. Then, by Eq.~(\ref{pepe-ineq2}), we obtain
that Eq.~(\ref{pepe-ineq1}) is equivalent to

\[(e_re_s-1)\prod_{i=r+2}^m e_i\geq\left(\sigma\right)e_{r}\cdots e_m-e_{r+1}\cdots e_m,\]
which reduces to $e_re_s-1\geq\left(1\right)e_re_s-e_s$, or equivalently,
$e_s\geq 1$.

\smallskip 

Case $k+1=r+1$: We can rewrite $\sigma$ as
\[\sigma=(k-1)+a_r+(a_s-e_s)-(k-2)=a_r+(a_s-e_s)+1.\] 
Then, using Eq.~(\ref{pepe-ineq2}), we obtain that
Eq.~(\ref{pepe-ineq1}) is equivalent to
\[(a_ra_s+a_s-1)\prod_{i=r+2}^m e_i\geq\left(\sigma\right)e_{r+1}\cdots e_m-e_{r+2}\cdots e_m,\]
which reduces to $a_ra_s+a_s\geq\left(a_r+a_s-e_s+1\right)e_{s}$, 
or equivalently,
\[(e_s-a_s)(e_s-a_r-1)\geq 0.\]

Case $k+1>r+1$: One can rewrite $\sigma$ as
\[\sigma=(r-1)+a_r+\cdots+a_k-(k-2).\] 
Then, using Eq.~(\ref{pepe-ineq2}), we obtain that
Eq.~(\ref{pepe-ineq1}) reduces to
\[(a_ra_s+a_s-1)e_{r+2}\cdots e_{k+1}\geq\left(\sigma\right)e_{k+1}-1.\]
But, as $a_s\geq 2$, using Lemma~\ref{basic-ineq}, we get
 \begin{eqnarray*}(a_ra_s+a_s-1)e_{r+2}\cdots e_{k}&\geq&
(a_ra_s+a_s-1)+e_{r+2}+\cdots+e_k-(k-r-1)\\
&\geq&(a_r+a_s)+a_{r+2}+\cdots+a_k+r-k+1=\sigma.
 \end{eqnarray*}
\quad So, multiplying by $e_{k+1}$, the
required inequality follows.
\end{proof}

\begin{theorem}\label{pepe-vila} Let $d\geq 1$ and 
$1\leq e_1\leq\cdots\leq e_m$ be integers. Suppose 
$1\leq a_i\leq e_i$ and $1\leq b_i\leq e_i$, for $i=1,\ldots,m$, are integers such that 
$d=\sum_{i=1}^m a_i=\sum_{i=1}^m
b_i$ and $a\neq b$. Then 
\[\pi(a,b)\geq
\left(\sum_{i=1}^m a_i-\sum_{i=k+1}^me_i-(k-2)\right)e_{k+1}\cdots
e_m-e_{k+2}\cdots e_m\] for $k=1,\ldots,m-1$, where $e_{k+2}\cdots e_m=1$ when
$k=m-1$.
\end{theorem}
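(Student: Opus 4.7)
The plan is to reduce a general pair $(a,b)$ satisfying the hypotheses of the theorem to the special form handled by Lemma~\ref{lemma3-pepe}, via a finite chain of the four operations given by Lemma~\ref{lemma2-pepe}. The right-hand side of the asserted inequality depends only on $d=\sum_i a_i$ and on the fixed sequence $(e_1,\ldots,e_m)$, both of which are preserved by each of the operations (i)--(iv) of Lemma~\ref{lemma2-pepe}, while $\pi(a,b)$ can only decrease. Therefore it is enough to establish the inequality in the final reduced configuration, and that is precisely Lemma~\ref{lemma3-pepe}.

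First, by repeated use of Lemma~\ref{lemma2-pepe}(i), I would sort $a$ into non-decreasing order $a_1\le a_2\le\cdots\le a_m$, rearranging $b$ accordingly; when $a_r>a_s$ with $r<s$, the swapped values still lie in $[1,e_r]$ and $[1,e_s]$ since $a_s\le a_r\le e_r\le e_s$. Next I would use parts (ii) and (iii) to migrate the "excess" of $a$ over $b$ from small indices to large indices: whenever $b_r<a_r\le a_s$ with $a_s+1\le e_s$, apply (iii); whenever $b_r=a_r\le a_s<b_s$, apply (ii). Iterating drives $(a,b)$ into a configuration with $a_i=b_i=1$ for $i<r$ and $a_i=b_i=e_i$ for $i>s$, for some pair $r\le s$, and a further round of (ii)/(iii) collapses $s-r$ to $1$. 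At this point $a$ and $b$ agree outside the two consecutive indices $r$ and $s=r+1$; by the symmetry of $\pi(a,b)$ in its arguments I may assume $a_r<b_r$ and $a_s>b_s$, whereupon Lemma~\ref{lemma2-pepe}(iv) adjusts $(b_r,b_s)$ to $(a_r+1,a_s-1)$, putting $(a,b)$ precisely in the setup of Lemma~\ref{lemma3-pepe}. That lemma then yields the desired inequality for every $k=1,\ldots,m-1$.

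The main obstacle is the bookkeeping in the middle step: each of (ii) and (iii) carries a restrictive admissibility hypothesis (for instance, applying (iii) requires $a_s+1\le e_s$ for the resulting $a_s'$ to stay in $[1,e_s]$), so one must check that whenever $(a,b)$ is not already in the canonical form, some valid operation is available, and that the sequence of reductions terminates. A lexicographic monovariant on the pair $\bigl(\,|\{i:a_i\ne b_i\}|,\,-\sum_i i\,a_i\,\bigr)$ should witness termination: operation (iv) strictly decreases the first coordinate, while (i)--(iii) either decrease the first coordinate or leave it fixed while strictly decreasing the second.
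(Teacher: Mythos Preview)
Your overall strategy---iterate the operations of Lemma~\ref{lemma2-pepe} until the pair lies in the configuration of Lemma~\ref{lemma3-pepe}---is exactly the paper's, but two pieces of your execution do not hold up.

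First, the proposed monovariant fails. Operation~(iv) does \emph{not} decrease $|\{i:a_i\neq b_i\}|$: after it, $b_r'=a_r+1\neq a_r$ and $b_s'=a_s-1\neq a_s$, so both positions still differ. Operation~(ii) can actually \emph{increase} that count: before it $b_r=a_r$ (so index~$r$ agrees), after it $a_r'=a_r-1\neq b_r$. The paper handles termination differently: it orders admissible pairs by GRevLex on $(a,b)\in\mathbb{Z}^{2m}$ and checks that every valid operation produces a pair strictly smaller in that order, so the process stops.

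Second, your final step misapplies~(iv). That operation carries the rigid hypothesis $b_r=a_s$, $b_s=a_r$; merely knowing $a_r<b_r$, $a_s>b_s$, and $a_r+a_s=b_r+b_s$ does not suffice, and one application of~(iv) does not ``adjust $(b_r,b_s)$ to $(a_r+1,a_s-1)$'' from an arbitrary starting point. The paper avoids a procedural reduction here altogether: instead it argues by contradiction that once \emph{no} valid operation is available, the pair already satisfies the hypotheses of Lemma~\ref{lemma3-pepe}. This is the content of Claims~(a)--(d) in its proof, each of which shows that any deviation from the canonical form $a_i=b_i=1$ for $i<r$, $a_i=b_i=e_i$ for $i>r+1$, $b_r\leq a_s$, $b_s=a_s-1$ would allow some part of Lemma~\ref{lemma2-pepe} to be applied. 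In particular, pinning down $b_s=a_s-1$ in Claim~(d) requires case-splitting on the relative sizes of $a_r,a_s,b_r,b_s$ and invoking (i), (iii), and (iv) in separate branches; this is the delicate point your sketch elides.
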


\begin{proof}  Apply to $(a,b)$ any of the four
``operations'' described in Lemma~\ref{lemma2-pepe}, and let $(a', b')$ be the new obtained
pair. These operations should be applied in such a way that $1\leq
a_i',b_i'\leq e_i$ for $i=1,\ldots,m$ and $a'\neq b'$; this is called
a {\it valid\/} operation. One can order
the set of all pairs $(a,b)$ that satisfy the hypothesis of the proposition
using the GRevLex order defined by $(a,b)\succ(a',b')$ if and only if
the last non-zero entry of $(a,b)-(a',b')$ is negative. Note that by construction
$d=\sum a_i'=\sum b_i'=\sum a_i$. Repeat this step as many times as possible
(which is a finite number because the result $(a',b')$ of any valid
operation applied to $(a,b)$ satisfies $(a,b)\succ(a',b')$). 
Permitting an abuse of notation, let $a$ and $b$ be the
resulting sequences at the end of that process. We will show that these $a$ and
$b$ satisfy the hypothesis of Lemma~\ref{lemma3-pepe}.

Set $r=\min(i:a_i\neq b_i)$. By symmetry we may assume $a_r<b_r$. 
Pick the first $s>r$ such that $a_s>b_s$ (the case $a_r>b_r$
and $a_s<b_s$ can be shown similarly). 

Claim (a): For $p<r$, $a_p=1$. Assume $a_p>1$. If $a_p>a_r$, we can
apply Lemma~\ref{lemma2-pepe}$(i)[p,r]$, which is 
assumed not possible; (this last notation means that we are applying
Lemma~\ref{lemma2-pepe}$(i)$ 
with the indexes $p$ and $r$). Otherwise apply Lemma~\ref{lemma2-pepe}$(ii)[p,r]$. So
$a_p=b_p=1$ for $p<r$.

Claim (b): $s=r+1$. Suppose $r<p<s$. To obtain a contradiction it
suffices to show that we can apply a valid operation to $a,b$.  By
the choice of $s$, $a_p\leq b_p$. 
If $b_r > b_p$, we can apply Lemma~\ref{lemma2-pepe}$(i)[r,p]$. If
$b_p > b_s$, we can apply Lemma~\ref{lemma2-pepe}$(i)[p,s]$. Hence
$b_r\leq b_p \leq b_s$. Notice that $b_p\geq 2$ because 
$a_r<b_r\leq b_p$. If $a_p = b_p$, we can apply
Lemma~\ref{lemma2-pepe}$(ii)[p,s]$. If $a_p<b_p$, we can apply
Lemma~\ref{lemma2-pepe}$(iii)[p,s]$ because $a_p<b_p\leq b_s<a_s$. 

Claim (c): For $p>s$, $a_p=b_p=e_p$. If $b_p<a_p$, applying Claim
(b) to $r$ and $p$ we get a contradiction. Thus we may
assume $b_p\geq a_p$. It suffices to show that $a_p=e_p$. 
If $a_s>a_p$, then by Lemma~\ref{lemma2-pepe}$(i)[s,p]$
one can apply a valid operation to $a,b$, a contradiction. Thus
$a_s\leq a_p$. If $a_p<e_p$, then $b_s<a_s\leq a_p<e_p$, and by
Lemma~\ref{lemma2-pepe}$(iii)[s,p]$ we can apply  
a valid operation to $a,b$, a contradiction. Hence $a_p=e_p$. 

Claim (d): $b_r\leq a_s$ and $b_s=a_s-1$. By the previous claims one
has the equalities $s=r+1$ and $a_r+a_s=b_r+b_s$. If $a_s<b_r$. Then
$b_s<a_s<b_r$, and by Lemma~\ref{lemma2-pepe}$(i)[r,s]$ we can apply a
valid operation to $a,b$, a contradiction. Hence $a_s\geq b_r$. 
Suppose $a_s=b_r$, then $a_r=b_s$. If $a_s-a_r\geq 2$, by 
Lemma~\ref{lemma2-pepe}$(iv)[r,s]$ we can apply a valid operation to
$a,b$, a contradiction. Hence, in this case, $a_s-b_s=a_s-a_r=1$. 
Suppose $a_s>b_r$. If $b_r>b_s$, then $a_s>b_r>b_s$, and we can use 
Lemma~\ref{lemma2-pepe}$(i)[r,s]$ to apply a valid operation to $a,b$,
a contradiction. Hence $b_r\leq b_s$. 
If $a_s-b_s=b_r-a_r\geq 2$, then $a_r<b_r\leq b_s<a_s$, and by 
Lemma~\ref{lemma2-pepe}$(iii)[r,s]$ we can apply a valid operation to
$a,b$, a contradiction. So, in this other case, also $a_s-b_s=1$.
In conclusion, we have that $b_r\leq a_s$ and $b_s=a_s-1$, as claimed.

From Claims (a)--(d), we obtain that $a,b$ satisfy the hypothesis of 
Lemma~\ref{lemma3-pepe}. Hence the required inequality follows from
Lemmas~\ref{lemma2-pepe} and \ref{lemma3-pepe}. 
\end{proof}

For $\alpha:\alpha_1,\ldots,\alpha_n$ and
$\beta:\beta_1,\ldots,\beta_n$ sequences in
$\Z^+$ we define 
\[P(\alpha,\beta)=\prod_{i=1}^n(d_i-\alpha_i)+
\prod_{i=1}^n(d_i-\beta_i)-\prod_{i=1}^n\min\{d_i-\alpha_i,d_i-\beta_i\}.\]
\begin{lemma}\label{pepe-vila-new-1} Let $1\leq d_1\leq\cdots\leq d_n$,
$0\leq \alpha_i,\beta_i\leq d_i-1$ for $i=1,\ldots,n$, $n\geq 2$, be integers such that
$\sum_{i=1}^n\alpha_i=\sum_{i=1}^n\beta_i$ 
and $(\alpha_1,\ldots,\alpha_n)\neq (\beta_1,\ldots,\beta_n)$. Then 
\begin{eqnarray}\label{aug-18-15}
P(\alpha,\beta)\geq\left(\sum_{i=1}^{k+1}(d_i-\alpha_i)-(k-1)-
\sum_{i=k+2}^n\alpha_i\right)d_{k+2}\cdots d_n-d_{k+3}\cdots d_n  
\end{eqnarray}
for $k=0,\ldots,n-2$, where $d_{k+3}\cdots d_n=1$ if $k=n-2$.
\end{lemma}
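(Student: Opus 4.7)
The plan is to deduce Lemma~\ref{pepe-vila-new-1} directly from Theorem~\ref{pepe-vila} via a simple change of variables. Set $m=n$, $e_i = d_i$, $a_i = d_i - \alpha_i$ and $b_i = d_i - \beta_i$ for $i=1,\ldots,n$. Since $0\leq \alpha_i\leq d_i-1$ and $0\leq \beta_i\leq d_i-1$, the new variables satisfy $1\leq a_i,b_i\leq e_i$. The hypothesis $\alpha\neq\beta$ translates to $a\neq b$ (they differ coordinate-wise by $\beta_i-\alpha_i$), and the hypothesis $\sum_i\alpha_i=\sum_i\beta_i$ yields $\sum_i a_i = \sum_i b_i$; call this common value $d$. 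Finally, $\min(a_i,b_i) = \min(d_i-\alpha_i,d_i-\beta_i)$, so $\pi(a,b)$ is literally equal to $P(\alpha,\beta)$.

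Next I would apply Theorem~\ref{pepe-vila} with the shifted index $k' = k+1$, which ranges over $1,\ldots,n-1$ as $k$ ranges over $0,\ldots,n-2$. The right-hand side of the conclusion of Theorem~\ref{pepe-vila} is
\[
\left(\sum_{i=1}^n a_i - \sum_{i=k'+1}^n e_i - (k'-2)\right) e_{k'+1}\cdots e_n - e_{k'+2}\cdots e_n.
\]
I would then split the first sum at index $k'$ and cancel the $d_i$'s in the range $k'+1 \leq i \leq n$, using
\[
\sum_{i=1}^n (d_i - \alpha_i) - \sum_{i=k'+1}^n d_i
  = \sum_{i=1}^{k'} (d_i-\alpha_i) - \sum_{i=k'+1}^n \alpha_i.
\]
Substituting $k' = k+1$ yields
\[
\left(\sum_{i=1}^{k+1}(d_i-\alpha_i) - (k-1) - \sum_{i=k+2}^n \alpha_i\right) d_{k+2}\cdots d_n - d_{k+3}\cdots d_n,
\]
which is exactly the right-hand side in the statement, with the convention $d_{k+3}\cdots d_n = 1$ when $k=n-2$ corresponding to the convention $e_{k'+2}\cdots e_m = 1$ when $k' = m-1$.

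There is no real obstacle here: the content of the lemma is Theorem~\ref{pepe-vila}, and the only work is bookkeeping with the index shift and verifying that the substitution $a_i = d_i-\alpha_i$, $b_i = d_i-\beta_i$ preserves all the hypotheses and converts both sides of the inequality into those of Lemma~\ref{pepe-vila-new-1}. The step that most warrants care is matching the summation bounds after the substitution, in particular the identity
$\sum_{i=1}^n(d_i-\alpha_i) - \sum_{i=k'+1}^n d_i = \sum_{i=1}^{k'}(d_i-\alpha_i) - \sum_{i=k'+1}^n \alpha_i$,
since this is what transforms the left-hand summand from the $\pi$-form into the $P$-form.
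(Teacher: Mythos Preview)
Your proposal is correct and is essentially identical to the paper's own proof: the paper also derives the lemma from Theorem~\ref{pepe-vila} via the substitutions $m=n$, $e_i=d_i$, $a_i=d_i-\alpha_i$, $b_i=d_i-\beta_i$, together with the index shift (the paper writes this as ``$k=k-1$'', which is your $k'=k+1$). Your write-up is actually more explicit than the paper's one-line proof, and the bookkeeping you highlight is exactly the only thing to check.
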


\begin{proof} Making the substitutions $m=n$, $k=k-1$,
$d_i-\alpha_i=a_i$, $d_i-\beta_i=b_i$, and
$d_i=e_i$, the inequality follows at once from 
Theorem~\ref{pepe-vila}.
\end{proof}

\section{Second generalized Hamming weight}

Let $A_1,\ldots,A_{s-1}$ be subsets of $\mathbb{F}_q$
and let  
$
\mathbb{X}:=[A_1 \times \cdots\times A_{s-1} \times \{1\}] \subset
\mathbb{P}^{s-1}
$
be a projective cartesian set, 
where $d_i=|A_i|$ for all $i=1,\ldots,s-1$ and $2 \leq d_1 \leq \cdots
\leq d_{s-1}$. The Reed--Muller-type code
$C_{\mathbb{X}}(d)$ is called an {\it affine cartesian code\/}
\cite{cartesian-codes}. If $\mathbb{X}^*=A_1 \times \cdots \times
A_{s-1}$, then $C_{\mathbb{X}}(d)=C_{\mathbb{X}^*}(d)$
\cite{cartesian-codes}. 
Assume
$d=\sum_{i=1}^{k}\left(d_i-1\right)+\ell$, where 
$k,\ell$ are integers such that 
$0\leq k\leq s-2$ and $1\leq \ell \leq
d_{k+1}-1$. 
\begin{lemma} \label{lemma1}
We can find two linearly
independent 
polynomials $F$ and $G$ $\in S_{\leq d}$ such that
$$
|V_{\mathbb{X}^*} (F) \cap V_{\mathbb{X}^*} (G)|= \left\{
\begin{array}{lll}
d_1 \cdots d_{s-1}-(d_{k+1}-\ell+1) d_{k+2} \cdots d_{s-1}+d_{k+3}
\cdots d_{s-1} & {\mbox{if}} & k<s-3, \\ 
d_1 \cdots d_{s-1}-(d_{k+1}-\ell+1) d_{k+2} \cdots d_{s-1}+1 & {\mbox{if}} & k=s-3, \\
d_1 \cdots d_{s-1}-d_{s-1}+\ell-1 & {\mbox{if}} & k=s-2.
\end{array} \right.
$$
\end{lemma}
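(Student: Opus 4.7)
The plan is an explicit construction. I would fix elements $\alpha_i^*\in A_i$ for $i=1,\dots,k$, distinct elements $\beta_1,\dots,\beta_\ell\in A_{k+1}$ (possible since $\ell\le d_{k+1}-1$), and set
\[
F_0 := \prod_{i=1}^k \prod_{\alpha \in A_i\setminus\{\alpha_i^*\}}(t_i-\alpha)\cdot\prod_{j=1}^{\ell-1}(t_{k+1}-\beta_j),
\]
which has degree $d-1$. The non-vanishing locus of $F_0$ on $\mathbb{X}^*$ is the ``box'' determined by $t_i=\alpha_i^*$ for $i\le k$ and $t_{k+1}\in A_{k+1}\setminus\{\beta_1,\dots,\beta_{\ell-1}\}$, so $|\mathbb{X}^*\setminus V_{\mathbb{X}^*}(F_0)|=(d_{k+1}-\ell+1)d_{k+2}\cdots d_{s-1}$.

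Next, I would put $F:=F_0\cdot(t_{k+1}-\beta_\ell)$ and
\[
G:=\begin{cases} F_0\cdot(t_{k+2}-\gamma) & \text{if } k\le s-3,\\ F_0\cdot(t_{k+1}-\beta_{\ell+1}) & \text{if } k=s-2,\end{cases}
\]
where $\gamma\in A_{k+2}$ and $\beta_{\ell+1}\in A_{k+1}\setminus\{\beta_1,\dots,\beta_\ell\}$ are arbitrary (the latter exists because $\ell\le d_{k+1}-1$). Both $F$ and $G$ have degree exactly $d$, and they are linearly independent: if $cF+c'G=0$, then cancelling $F_0$ yields a linear dependence between two nonproportional linear forms, forcing $c=c'=0$.

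The final step is to count $|V_{\mathbb{X}^*}(F)\cap V_{\mathbb{X}^*}(G)|$ via the set-theoretic identity
\[
V_{\mathbb{X}^*}(F)\cap V_{\mathbb{X}^*}(G)=V_{\mathbb{X}^*}(F_0)\cup\bigl(V_{\mathbb{X}^*}(t_{k+1}-\beta_\ell)\cap V_{\mathbb{X}^*}(H)\bigr),
\]
where $H$ is the extra linear factor of $G$, combined with inclusion--exclusion. For $k\le s-3$, the loci $V_{\mathbb{X}^*}(t_{k+1}-\beta_\ell)$ and $V_{\mathbb{X}^*}(t_{k+2}-\gamma)$ meet $\mathbb{X}^*$ in $d_1\cdots d_k\cdot d_{k+3}\cdots d_{s-1}$ points, and a direct count (using $\beta_\ell\notin\{\beta_1,\dots,\beta_{\ell-1}\}$) shows that $(d_1\cdots d_k-1)\cdot d_{k+3}\cdots d_{s-1}$ of them also lie in $V_{\mathbb{X}^*}(F_0)$; subtracting yields the first two formulas under the convention $d_{k+3}\cdots d_{s-1}=1$ when $k=s-3$. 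For $k=s-2$, the factors $(t_{k+1}-\beta_\ell)$ and $(t_{k+1}-\beta_{\ell+1})$ already have disjoint zero loci on $\mathbb{X}^*$, so $V_{\mathbb{X}^*}(F)\cap V_{\mathbb{X}^*}(G)=V_{\mathbb{X}^*}(F_0)$, whose cardinality is $d_1\cdots d_{s-1}-d_{s-1}+\ell-1$.

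The main difficulty will merely be the bookkeeping in the three case splits; the constructions are essentially forced by what inclusion--exclusion must produce, and no subtle algebra arises.
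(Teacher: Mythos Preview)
Your construction is correct and is essentially the same as the paper's: the paper also writes $F=f_1\cdots f_k\cdot g\cdot h_1$ and $G=f_1\cdots f_k\cdot g\cdot h_2$, where $f_1\cdots f_k\cdot g$ is your $F_0$, $h_1=\beta_{k+1,\ell}-t_{k+1}$ is your $(t_{k+1}-\beta_\ell)$, and $h_2$ is your extra linear factor (either $(t_{k+2}-\gamma)$ or, when $k=s-2$, $(t_{k+1}-\beta_{\ell+1})$). The counting is also the same in substance---the paper computes $|V_1|=|V_2|-|V_3|$ for explicit boxes $V_2\supset V_1$ and $V_3=V_2\setminus V_1$, which is exactly your inclusion--exclusion phrased slightly differently.
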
 
\demo 
Case (I): $k \leq s-3$. Similarly to \cite{cartesian-codes} we take 
$ A_i=\{\beta_{i,1}, \ldots,\beta_{i,d_i}\}$, for $i=1,\ldots,s-1$. Also, for $i=1,\ldots,k$, let
 \begin{eqnarray*}
& & f_i:=(\beta_{i, 1}-t_i)(\beta_{i,2}-t_i) \cdots (\beta_{i,
d_{i-1}}-t_i),\\
 & & g:=(\beta_{k+1, 1}-t_{k+1})(\beta_{k+1,2}-t_{k+1}) \cdots (\beta_{k+1, \ell-1}-t_{k+1}).
 \end{eqnarray*}
 
 Setting $h_1:=\beta_{k+1, \ell}-t_{k+1}$ and $h_2:=\beta_{k+2,
 \ell}-t_{k+2}$. We define $F:=f_1 \cdots f_k \cdot g \cdot h_1$ and
 $G:=f_1 \cdots f_k \cdot g \cdot h_2$. Notice that $\deg F=\deg
 G=\sum_{i=1}^k (d_i-1)+\ell=d$ and that they are linearly independent over $\mathbb{F}_q$. Let   
 \begin{eqnarray*}
& & V_1:=(A_1 \times \cdots \times A_{s-1}) \setminus
(V_{\mathbb{X}^*} (F) 
\cap V_{\mathbb{X}^*} (G)),\\ 
& & V_2:=\{\beta_{1,d_1}\} \times \cdots\times \{\beta_{k,d_k}\} \times
 \{\beta_{k+1,i}\}_{i=\ell}^{d_{k+1}} \times A_{k+2} \times
 \cdots\times A_{s-1}.
 \end{eqnarray*}
 
 It is easy to see that $V_1 \subset V_2$ and 
$(V_2 \setminus V_1) \cap (V_{\mathbb{X}^*}(F) \cap
V_{\mathbb{X}^*}(G))=V_3$, where
 $$
 V_3=\left\{ \begin{array}{lll}
 \{\beta_{1,d_1}\} \times \cdots\times \{\beta_{k,d_k}\} \times
 \{\beta_{k+1,\ell}\} \times \{\beta_{k+2,\ell}\} \times A_{k+3}
 \times \cdots\times A_{s-1} & {\mbox{if}} & k<s-3, \\
 \{\beta_{1,d_1}\} \times \cdots\times \{\beta_{k,d_k}\} \times
 \{\beta_{k+1,\ell}\} \times \{\beta_{k+2,\ell}\} & {\mbox{if}} & k=s-3. 
 \end{array} \right.
 $$
 
 Therefore
 $$
 |V_1|=|V_2|-|V_3|=\left\{
 \begin{array}{lll}
 (d_{k+1}-\ell+1)d_{k+2} \cdots d_{s-1}-d_{k+3} \cdots d_{s-1} & {\mbox{if}} & k<s-3, \\
 (d_{k+1}-\ell+1) d_{k+2} \cdots d_{s-1}-1 & {\mbox{if}} & k=s-3,
 \end{array} \right.
 $$
 and the claim follows because $|V_{\mathbb{X}^*}(F) \cap
 V_{\mathbb{X}^*}(G)|=d_1 \cdots d_{s-1}-|V_1|$. 
 
Case (II): $k=s-2$. As $\ell \leq d_{k+1}-1$ then $\ell+1 \leq d_{k+1}$. Let
$h_3:=\beta_{k+1,\ell+1}-t_{k+1}$, and $F, f_i, g, h_1$ as in Case (I).
Let $G':=f_1 \cdots f_k \cdot g \cdot h_3$. If  
\begin{eqnarray*}
 & & V_1':=(A_1 \times \cdots \times A_{s-1}) \setminus
 (V_{\mathbb{X}^*} (F) \cap V_{\mathbb{X}^*} (G')), \\
  & &V_2':=\{\beta_{1,d_1}\} \times \cdots\times \{\beta_{k,d_k}\}
  \times \{\beta_{k+1,i}\}_{i=\ell}^{d_{k+1}}, 
 \end{eqnarray*}
 then (because $h_1$ and $h_3$ do not have common zeros) $V_1'=V_2'$ and thus
 $$
 |V_1'|=d_{k+1}-\ell+1=d_{s-1}-\ell+1.
 $$
 
 The result follows because $|V_{\mathbb{X}^*} (F) \cap V_{\mathbb{X}^*} (G')|=d_1 \cdots
 d_{s-1}-|V_1'|$. \qed

\begin{lemma}{\cite[Lemma~3.3]{min-dis-ci}}\label{dec20-15} 
Let $L\subset S$ be the ideal $(t_1^{d_1},\ldots ,t_{s-1}^{d_{s-1}})$,
where $d_1,\ldots,d_{s-1}$ are in $\mathbb{N}_+$. 
If $t^a=t_1^{a_1}\cdots
t_s^{a_s}$, $a_j\geq 1$ for some $1\leq j\leq s-1$, and $a_i\leq
d_i-1$ for $i\leq s-1$,
then 
$$
\deg(S/(L,t^a)) =\deg(S/(L,t_1^{a_1}\cdots
t_{s-1}^{a_{s-1}}))= d_1\cdots d_{s-1}-
\prod_{i=1}^{s-1}(d_i-a_i).
$$
\end{lemma}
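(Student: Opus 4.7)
The plan is to compute both degrees by exploiting the fact that $L$ and $t^a$ are all monomials, so they automatically form a Gröbner basis (under any monomial order), and the standard monomials of $S/(L,t^a)$ are exactly the $t^b=t_1^{b_1}\cdots t_s^{b_s}$ satisfying $b_i\leq d_i-1$ for $i\leq s-1$ and $t^a\nmid t^b$. Writing $u=t_1^{a_1}\cdots t_{s-1}^{a_{s-1}}$, so that $t^a=u\, t_s^{a_s}$, the strategy is to reduce the computation to the eventual value of the Hilbert function and count standard monomials of large degree.

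First I would check that $\dim(S/(L,t^a))=1$. The minimal prime over $L$ is the height $s-1$ prime $\mathfrak{p}=(t_1,\ldots,t_{s-1})$, and since by hypothesis $a_j\geq 1$ for some $j\leq s-1$ we have $t^a\in\mathfrak{p}$; hence $\mathrm{ht}(L,t^a)=s-1$ and the Krull dimension is $1$. Consequently $\deg(S/(L,t^a))=\lim_{d\to\infty} H_{(L,t^a)}(d)$, the eventual constant value of the Hilbert function.

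Next I would partition the standard monomials by the tuple $(b_1,\ldots,b_{s-1})\in\prod_{i=1}^{s-1}\{0,\ldots,d_i-1\}$ and analyze two cases. If $u$ does not divide $t_1^{b_1}\cdots t_{s-1}^{b_{s-1}}$, i.e.\ if some $b_i<a_i$ with $i\leq s-1$, then $t^a\nmid t^b$ regardless of $b_s$, and every $b_s\geq 0$ yields a standard monomial; each such tuple contributes exactly one standard monomial in every sufficiently large degree. If instead $b_i\geq a_i$ for all $i\leq s-1$, the obstruction $t^a\nmid t^b$ becomes $b_s<a_s$, which yields only finitely many standard monomials in total and thus contributes $0$ to the eventual Hilbert function.

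Therefore the degree equals the number of tuples of the first type, namely the number of $(b_1,\ldots,b_{s-1})\in\prod_{i=1}^{s-1}\{0,\ldots,d_i-1\}$ for which some $b_i<a_i$. The complementary count, corresponding to $a_i\leq b_i\leq d_i-1$ for every $i\leq s-1$, equals $\prod_{i=1}^{s-1}(d_i-a_i)$, so the desired count is $d_1\cdots d_{s-1}-\prod_{i=1}^{s-1}(d_i-a_i)$, which proves the formula. Observe that the answer is independent of $a_s$, which simultaneously establishes the first equality $\deg(S/(L,t^a))=\deg(S/(L,u))$ (the case $a_s=0$). There is no serious obstacle: once one verifies that $\dim(S/(L,t^a))=1$, the only remaining work is the bookkeeping of the combinatorial partition, which is transparent because $t_s$ appears in neither $L$ nor $u$.
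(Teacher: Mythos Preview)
Your argument is correct. The paper does not actually prove this lemma; it simply cites it as \cite[Lemma~3.3]{min-dis-ci}, so there is no in-paper proof to compare against. Your approach---checking that $\dim(S/(L,t^a))=1$ via the unique minimal prime $(t_1,\ldots,t_{s-1})$, identifying the degree with the stable value of the Hilbert function, and then counting standard monomials of large degree by partitioning on the first $s-1$ exponents---is a clean, self-contained argument that establishes both equalities at once, and the observation that the count is independent of $a_s$ neatly handles the first equality without a separate computation.
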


We come to one of our applications to coding theory.

\begin{theorem}\label{vila-pepe-sarabia-2} Let $A_i$, $i=1,\ldots,s-1$, be
subsets of $\mathbb{F}_q$ and let $\mathbb{X}\subset\mathbb{P}^{s-1}$ be the 
projective cartesian set given by $\mathbb{X}=[A_1\times\cdots\times
A_{s-1}\times\{1\}]$. If $d_i=|A_i|$ for $i=1,\ldots,s-1$ and
$2\leq d_1\leq \cdots\leq d_{s-1}$, then
$$
\delta_\mathbb{X}(d,2)=\left\{\hspace{-1mm}
\begin{array}{ll}\left(d_{k+1}-\ell+1\right)d_{k+2}\cdots
d_{s-1}-d_{k+3}\cdots d_{s-1}&\mbox{ if }
k<s-3,\\
\left(d_{k+1}-\ell+1\right)d_{k+2}\cdots d_{s-1}-1&\mbox{ if }
k=s-3,\\
\qquad \qquad d_{s-1}-\ell+1&\mbox{ if } k=s-2,
\\
\qquad \qquad  \qquad  2&\mbox{ if } d\geq
\sum\limits_{i=1}^{s-1}\left(d_i-1\right),
\end{array}
\right.
$$
where $0\leq k\leq s-2$ and $\ell$ are integers such that 
$d=\sum_{i=1}^{k}\left(d_i-1\right)+\ell$ and $1\leq \ell \leq
d_{k+1}-1$.
\end{theorem}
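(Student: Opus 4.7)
My plan is to prove the formula by matching upper and lower bounds. For the upper bound I would use Lemma~\ref{lemma1}: its polynomials $F,G$ (or $F,G'$ when $k=s-2$) have degree exactly $d$, and after homogenizing with respect to $t_s$ become elements of $S_d$ whose images in $S_d/I(\mathbb{X})_d$ remain $K$-linearly independent (one exhibits a point of $\mathbb{X}^*$ where $F$ vanishes and $G$ does not, and vice versa). Lemma~\ref{mar14-17}(iii) then yields $\delta_\mathbb{X}(d,2)\leq |\mathbb{X}|-|V_{\mathbb{X}^*}(F)\cap V_{\mathbb{X}^*}(G)|$, which equals the stated expression in each of the first three cases. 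The fourth case $d\geq \sum_{i=1}^{s-1}(d_i-1)$ is immediate from Remark~\ref{sarabia-vila}(b), since that sum is the regularity of $S/I(\mathbb{X})$.

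For the lower bound when $k=s-2$, I would bypass the footprint: the strict monotonicity $\delta_\mathbb{X}(d,1)<\delta_\mathbb{X}(d,2)$ of the weight hierarchy \cite[Theorem~1]{wei}, combined with the affine-cartesian first-minimum-distance formula $\delta_\mathbb{X}(d,1)=d_{s-1}-\ell$ of \cite{cartesian-codes}, immediately yields $\delta_\mathbb{X}(d,2)\geq d_{s-1}-\ell+1$. For the remaining range $k\leq s-3$ I would invoke Theorem~\ref{rth-footprint-lower-bound} in the lex order with $t_s$ smallest. Then ${\rm in}_\prec(I(\mathbb{X}))=L:=(t_1^{d_1},\ldots,t_{s-1}^{d_{s-1}})$, and for distinct standard monomials $t^a,t^b\in S_d$ with $(L:(t^a,t^b))\neq L$, an inclusion-exclusion for monomial ideals (using $(L+(t^a))\cap(L+(t^b))=L+(\mathrm{lcm}(t^a,t^b))$) together with Lemma~\ref{dec20-15} yields
\[
\deg(S/I(\mathbb{X}))-\deg(S/(L,t^a,t^b))=P(\alpha,\beta),
\]
where $\alpha=(a_1,\ldots,a_{s-1})$, $\beta=(b_1,\ldots,b_{s-1})$, and $P$ is the function of Theorem~\ref{application-to-rmtc-1}. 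So the task reduces to showing $P(\alpha,\beta)\geq$ (formula) for every admissible pair.

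I would split on $e:=\sum_{i=1}^{s-1}\alpha_i$ versus $e':=\sum_{i=1}^{s-1}\beta_i$. In Case~A ($e=e'$, which forces $\alpha\neq\beta$), Lemma~\ref{pepe-vila-new-1} applied with $n=s-1$ and its own index set equal to the theorem's $k$ (valid since $k\leq s-3$) gives
\[
P(\alpha,\beta)\geq\bigl({\textstyle\sum_{i=1}^{k+1}d_i}-e-(k-1)\bigr)d_{k+2}\cdots d_{s-1}-d_{k+3}\cdots d_{s-1},
\]
and plugging $e=d=\sum_{i=1}^k(d_i-1)+\ell$ collapses the bracket to $d_{k+1}-\ell+1$, producing the formula exactly; smaller $e$ only strengthens the bound. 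In Case~B ($e\neq e'$, WLOG $e'\leq d-1$), the trivial $P\geq\prod_{i=1}^{s-1}(d_i-\beta_i)$, combined with the monotonicity of this product in $\sum_i\beta_i$, reduces the task to the affine-cartesian first-minimum-distance at degree $d-1$, and a direct comparison (splitting $\ell=1$ from $\ell\geq 2$) shows that $\delta_\mathbb{X}(d-1,1)>$ (formula) in every subcase. The main obstacle is that Lemma~\ref{pepe-vila-new-1} becomes too weak precisely when $k=s-2$ (returning a bound of $(2-\ell)d_{s-1}-1$, short by $1$ or even negative), which is exactly why that regime has to be detached and handled via Wei's strict inequality rather than through the footprint.
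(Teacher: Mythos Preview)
Your argument is correct and follows the paper's overall architecture (explicit construction via Lemma~\ref{lemma1} for the upper bound, footprint bound via Theorem~\ref{rth-footprint-lower-bound} for the lower bound), but the way you organise the lower bound differs from the paper in two places worth noting.

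First, for the regime $k=s-2$ you bypass the footprint entirely and use Wei's strict monotonicity $\delta_\mathbb{X}(d,1)<\delta_\mathbb{X}(d,2)$ together with the known value $\delta_\mathbb{X}(d,1)=d_{s-1}-\ell$ from \cite{cartesian-codes}. The paper instead stays inside the footprint computation and treats $k=s-2$ by the elementary product inequality of Lemma~\ref{basic-ineq}. Your route is shorter; the paper's route has the advantage that it simultaneously shows ${\rm fp}_{I(\mathbb{X})}(d,2)=\delta_\mathbb{X}(d,2)$ also for $k=s-2$, which feeds directly into Theorem~\ref{application-to-rmtc-1}.

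Second, within the footprint analysis your case split is $a_s=b_s$ versus $a_s\neq b_s$, whereas the paper splits on $a_s=b_s=0$ versus $\max(a_s,b_s)\geq 1$. In the latter case the paper invokes \cite[Proposition~5.7]{hilbert-min-dis}; you instead use the trivial bound $P(\alpha,\beta)\geq\prod_i(d_i-\beta_i)$ and compare with $\delta_\mathbb{X}(d-1,1)$. Your argument is self-contained and avoids the external citation; just make sure to record that Case~B is vacuous when $d=1$ (since then $b_s\geq 1$ forces $\beta=0$, contradicting $(L\colon(t^a,t^b))\neq L$), so that the appeal to degree $d-1$ never degenerates. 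One small wording issue: Lemma~\ref{pepe-vila-new-1} is stated only for $k\leq n-2=s-3$, so it does not ``return'' anything at $k=s-2$; the deficiency you describe is what one gets by applying it at its last valid index $k=s-3$ while the theorem's parameter is $k=s-2$.
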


\begin{proof} We set $n=s-1$, $I=I(\mathbb{X})$, and
$L=(t_1^{d_1},\ldots,t_n^{d_n})$. By \cite[Theorem~1, Corollary~1]{wei}, 
we get $\delta_\mathbb{X}(d,2)=2$ for
$d\geq \sum_{i=1}^{s-1}(d_i-1)$. Thus we may assume
$d<\sum_{i=1}^{s-1}(d_i-1)$. First we show the inequality
``$\geq$''.  Let $\prec$ be a graded monomial order
with $t_1\succ\cdots\succ 
t_s$. The initial ideal ${\rm in}_\prec(I)$ of $I$ is equal to 
$L=(t_1^{d_1},\ldots,t_n^{d_n})$; see \cite{cartesian-codes}. 
Let $F=\{t^a,t^b\}$ be an element 
of $\mathcal{M}_{\prec,d,2}$, that is, 
$t^a=t_1^{a_1}\cdots t_s^{a_s}$, $t^b=t_1^{b_1}\cdots t_s^{b_s}$,
$d=\sum_{i=1}^sa_i=\sum_{i=1}^sb_i$, $a\neq b$, $a_i\leq d_i-1$ and $b_i\leq d_i-1$
for $i=1,\ldots,n$, and $(L\colon(F))\neq L$. In
particular, from the last condition it follows readily that 
$a_i\neq 0$ and $b_j\neq 0$ for some $1\leq i,j\leq n$.  
There are exact sequences of graded $S$-modules  
\begin{eqnarray*}
&0\rightarrow (S/((L,t^a)\colon t^b))[-|b|]\stackrel{t^b}{\rightarrow}
S/(L,t^a)\rightarrow S/(L,t^a,t^b)\rightarrow 0,\ \ \ \ \ \ \ \
\ \ \ & \\
&0\rightarrow (S/((L,t^b)\colon t^a))[-|a|]\stackrel{t^a}{\rightarrow}
S/(L,t^b)\rightarrow S/(L,t^a,t^b)\rightarrow 0,&
\end{eqnarray*}
where $|a|=\sum_{i=1}^sa_i$. From the equalities
\begin{eqnarray*}
&((L,t^a)\colon t^b)=(L\colon t^b)+(t^a\colon
t^b)=(t_1^{d_1-b_1},\ldots,t_{n}^{d_n-b_n},
\prod_{i=1}^s t_i^{\max\{a_i,b_i\}-b_i}), &\\
&((L,t^b)\colon t^a)=(L\colon t^a)+(t^b\colon
t^a)=(t_1^{d_1-a_1},\ldots,t_{n}^{d_n-a_n},
\prod_{i=1}^s t_i^{\max\{a_i,b_i\}-a_i}), &
\end{eqnarray*}
it follows that either $((L,t^a)\colon t^b)$ or $((L,t^b)\colon t^a)$ is contained in
$(t_1,\ldots,t_n)$. Hence at least one of these ideals 
has height $n$. Therefore, setting 
$$P(a,b)=\prod_{i=1}^n(d_i-a_i)+
\prod_{i=1}^n(d_i-b_i)-\prod_{i=1}^n\min\{d_i-a_i,d_i-b_i\},$$ 
and using Lemma~\ref{dec20-15} it is not hard to see that the
degree of $S/(L,t^a,t^b)$ is 
$$
\deg(S/(L,t^a,t^b))=\prod_{i=1}^nd_i-P(a,b), 
$$
and the second generalized footprint function of $I$ is 
\begin{equation}\label{apr7-17}
{\rm fp}_I(d,2)=\min\left\{P(a,b)\vert\,
\{t^a,t^b\}\in\mathcal{M}_{\prec,d,2} \right\}.
\end{equation}

Making the substitution
$-\ell=\sum_{i=1}^{k}\left(d_i-1\right)-\sum_{i=1}^sa_i$ and using
the fact that ${\rm fp}_{I(\mathbb{X})}(d,r)$ is less than or equal to 
$\delta_\mathbb{X}(d,r)$ (see Theorem~\ref{rth-footprint-lower-bound}) it suffices
to show the inequalities 
\begin{equation}\label{apr7-17-1}
P(a,b)\geq \left(\sum_{i=1}^{k+1}(d_i-a_i)-(k-1)-
a_s-\sum_{i=k+2}^na_i\right)d_{k+2}\cdots d_n-d_{k+3}\cdots d_n,
\end{equation}
for $\{t^a,t^b\}\in\mathcal{M}_{\prec,d,2}$ if $0\leq k\leq s-3$, 
where $d_{k+3}\cdots d_n=1$ if $k=s-3$, and
\begin{equation}\label{apr7-17-2}
P(a,b)\geq \sum_{i=1}^{s-1}(d_i-a_i)-(s-3)-
a_s,
\end{equation}
for $\{t^a,t^b\}\in\mathcal{M}_{\prec,d,2}$ if $k=s-2$. As
$(a_1,\ldots,a_n)$ is not equal to $(b_1,\ldots,b_n)$, one has that either
$\prod_{i=1}^nd_i-\prod_{i=1}^n(d_i-a_i)\geq 1$ or
$\prod_{i=1}^nd_i-\prod_{i=1}^n(d_i-b_i)\geq 1$. If $a_s\geq 1$
or $b_s\geq 1$ (resp. $a_s=b_s=0$), the inequality of
Eq.~(\ref{apr7-17-1}) follows at once from 
\cite[Proposition~5.7]{hilbert-min-dis} (resp.
Lemma~\ref{pepe-vila-new-1}).  If $a_s\geq 1$ or $b_s\geq 1$ (resp.
$a_s=b_s=0$), the inequality of Eq.~(\ref{apr7-17-2}) follows at once from 
\cite[Proposition~5.7]{hilbert-min-dis} (resp.
Lemma~\ref{basic-ineq}). This completes the proof of the 
inequality ``$\geq$''.

The inequality ``$\leq$'' follows directly from Lemma \ref{lemma1}.
\end{proof}

Using Theorem~\ref{vila-pepe-sarabia-2} one recovers the following main result of 
\cite{camps-sarabia-sarmiento-vila}.

\begin{corollary}{\cite[Theorem~18]{camps-sarabia-sarmiento-vila}}\label{theorem3}
Let $K=\mathbb{F}_q$ be a finite field and let $\mathbb{T}$ be a
projective  torus in $\mathbb{P}^{s-1}$. If $d\geq 1$ and $s\geq 3$, then 
\begin{equation*}
\delta_\mathbb{T}(d,2)= 
\left\{
\begin{array}{lll}
(q-1)^{s-(k+3)}[(q-1)(q-\ell)-1] & {\mbox{if}} & 1 \leq d \leq \eta, \\
q-\ell & {\mbox{if}} & \eta < d < \gamma, \\
2 & \mbox{if} & d \geq \gamma,
\end{array} \right. 
\end{equation*}
where $k$ and $\ell$ are the unique integers such that
$d=k(q-2)+\ell$, $k \geq 0$, $1 \leq \ell \leq q-2$,
$\eta=(q-2)(s-2)$ and $\gamma=(q-2)(s-1)$.  
\end{corollary}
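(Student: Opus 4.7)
The plan is to obtain Corollary~\ref{theorem3} as a direct specialization of Theorem~\ref{vila-pepe-sarabia-2}. The work consists of (a) identifying the projective torus $\mathbb{T}\subset\mathbb{P}^{s-1}$ as a projective cartesian set of the form $[A_1\times\cdots\times A_{s-1}\times\{1\}]$, and (b) simplifying each of the four piecewise expressions in Theorem~\ref{vila-pepe-sarabia-2} under that identification. I expect no real mathematical obstacle; the proof is algebraic bookkeeping.

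For (a), every point $[x_1,\ldots,x_s]\in\mathbb{T}$ has $x_s\in\mathbb{F}_q^*$, so after rescaling we may assume $x_s=1$, which yields $\mathbb{T}=[(\mathbb{F}_q^*)^{s-1}\times\{1\}]$. Taking $A_i=\mathbb{F}_q^*$ for $i=1,\ldots,s-1$ gives $d_i=q-1$ for all $i$, so the hypothesis $2\leq d_1\leq\cdots\leq d_{s-1}$ of Theorem~\ref{vila-pepe-sarabia-2} holds as long as $q\geq 3$ (the degenerate case $q=2$ makes $\mathbb{T}$ a single point, so the statement is vacuous). Under this substitution the writing $d=\sum_{i=1}^k(d_i-1)+\ell$ becomes $d=k(q-2)+\ell$ with $1\leq\ell\leq q-2$, which matches exactly the parametrization used in the corollary.

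For (b), I would substitute $d_i=q-1$ into each branch of Theorem~\ref{vila-pepe-sarabia-2}. In the branch $k<s-3$,
\begin{eqnarray*}
(d_{k+1}-\ell+1)d_{k+2}\cdots d_{s-1}-d_{k+3}\cdots d_{s-1}
&=&(q-\ell)(q-1)^{s-k-2}-(q-1)^{s-k-3}\\
&=&(q-1)^{s-(k+3)}\bigl[(q-1)(q-\ell)-1\bigr].
\end{eqnarray*}
The branch $k=s-3$ gives $(q-\ell)(q-1)-1$, agreeing with the previous expression at $k=s-3$ since $s-(k+3)=0$. Hence the first two branches of Theorem~\ref{vila-pepe-sarabia-2} jointly cover the range $1\leq d\leq(s-2)(q-2)=\eta$ of Corollary~\ref{theorem3}. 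The branch $k=s-2$ produces $d_{s-1}-\ell+1=q-\ell$ for $1\leq\ell\leq q-2$; this is the middle branch of the corollary in the range $\eta<d<\gamma$, and at $\ell=q-2$ it produces $\delta_\mathbb{T}(\gamma,2)=2$, in agreement with the fourth branch $d\geq\sum_{i=1}^{s-1}(d_i-1)=\gamma$ of Theorem~\ref{vila-pepe-sarabia-2}.

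The only minor point demanding attention is the boundary $d=\gamma$, where both the $k=s-2,\ \ell=q-2$ instance and the $d\geq\sum(d_i-1)$ instance apply; both yield $\delta_\mathbb{T}(\gamma,2)=2$, so the specialization is self-consistent. Once these routine identities are checked, Corollary~\ref{theorem3} follows immediately.
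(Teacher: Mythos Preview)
Your proposal is correct and matches the paper's approach: the paper does not give a detailed proof of this corollary but simply states that it is recovered from Theorem~\ref{vila-pepe-sarabia-2}, which is precisely the specialization you carry out with $A_i=\mathbb{F}_q^*$ and $d_i=q-1$. Your case analysis, including the boundary check at $d=\gamma$ and the observation that $q=2$ is vacuous, fills in exactly the routine details the paper omits.
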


Another of our applications to coding theory is the following
purely combinatorial formula for the second generalized Hamming weight
of an affine cartesian code which is quite different from the
corresponding formula
of \cite[Theorem~5.4]{GHWCartesian}.

\begin{theorem}\label{application-to-rmtc-1} Let $\mathcal{P}_d$ be
the set of all pairs $(a,b)$, 
$a,b$ in $\mathbb{N}^s$, $a=(a_1,\ldots,a_s)$, $b=(b_1,\ldots,b_s)$, 
such that $a\neq b$, $d=\sum_{i=1}^sa_i=\sum_{i=1}^sb_i$, $1\leq
a_i,b_i\leq d_i-1$ for $i=1,\ldots,n$, $n:=s-1$, $a_i\neq 0$ and $b_j\neq 0$
for some $1\leq i,j\leq n$. If $\mathbb{X}=[A_1\times\cdots\times
A_{n}\times\{1\}]\subset \mathbb{P}^n$, with
$A_i\subset\mathbb{F}_q$, $d_i=|A_i|$, and 
$2\leq d_1\leq\cdots\leq d_{n}$, 
then
$$
{\rm fp}_{I(\mathbb{X})}(d,2)=\delta_\mathbb{X}(d,2)=\min\left\{P(a,b)\vert\,
(a,b)\in \mathcal{P}_d\right\}\ \mbox{ for }\ 
d\leq\textstyle\sum_{i=1}^n (d_i-1),
$$
where $P(a,b)=\prod_{i=1}^{n}(d_i-a_i)+
\prod_{i=1}^{n}(d_i-b_i)-\prod_{i=1}^{n}\min\{d_i-a_i,d_i-b_i\}$.
\end{theorem}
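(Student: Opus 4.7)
The plan is to combine the identity established as Equation~(\ref{apr7-17}) in the proof of Theorem~\ref{vila-pepe-sarabia-2} with a combinatorial identification of the index set $\mathcal{M}_{\prec,d,2}$ with the set $\mathcal{P}_d$, and then invoke Theorem~\ref{vila-pepe-sarabia-2} itself to close the loop. Throughout, $\prec$ denotes a graded monomial order on $S$ with $t_1\succ\cdots\succ t_s$, so that ${\rm in}_\prec(I(\mathbb{X}))=L:=(t_1^{d_1},\ldots,t_n^{d_n})$ by \cite{cartesian-codes}.

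The first step is to identify $\mathcal{M}_{\prec,d,2}$ with (unordered) pairs arising from $\mathcal{P}_d$. The standard monomials of $I(\mathbb{X})$ in degree $d$ are exactly the $t^a$ with $|a|=d$ and $a_i\le d_i-1$ for $i=1,\ldots,n$ (no constraint on $a_s$). For a pair $\{t^a,t^b\}$ of distinct such monomials, the condition $(L\colon(t^a,t^b))\neq L$ means there is a standard monomial $t^c$ with both $t^ct^a$ and $t^ct^b$ in $L$. Since $L$ is generated by $t_i^{d_i}$ for $i\le n$, this requires some $i\le n$ with $c_i+a_i\ge d_i$ and some $j\le n$ with $c_j+b_j\ge d_j$; as $c_k\le d_k-1$ for $k\le n$, such a $t^c$ exists exactly when $a_i\ge 1$ for some $i\le n$ and $b_j\ge 1$ for some $j\le n$. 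This is precisely the defining requirement of $\mathcal{P}_d$, so $\mathcal{M}_{\prec,d,2}$ corresponds bijectively (modulo swapping $a$ and $b$) to $\mathcal{P}_d$.

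Next, Equation~(\ref{apr7-17}) in the proof of Theorem~\ref{vila-pepe-sarabia-2} supplies
$$
{\rm fp}_{I(\mathbb{X})}(d,2)=\min\{P(a,b)\mid \{t^a,t^b\}\in\mathcal{M}_{\prec,d,2}\},
$$
and since $P(a,b)=P(b,a)$, the preceding identification rewrites this as
$$
{\rm fp}_{I(\mathbb{X})}(d,2)=\min\{P(a,b)\mid (a,b)\in\mathcal{P}_d\}.
$$
For $d\le\sum_{i=1}^n(d_i-1)$, Theorem~\ref{vila-pepe-sarabia-2} provides the explicit closed form for $\delta_\mathbb{X}(d,2)$, and its proof shows simultaneously that this closed form equals ${\rm fp}_{I(\mathbb{X})}(d,2)$: the ``$\ge$'' direction bounds $\min_{\mathcal{P}_d}P(a,b)$ below by the formula via Lemma~\ref{pepe-vila-new-1} and \cite[Proposition~5.7]{hilbert-min-dis}, while Lemma~\ref{lemma1} realizes the formula as an upper bound for $\delta_\mathbb{X}(d,2)$. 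Chaining with the inequality ${\rm fp}_{I(\mathbb{X})}(d,2)\le\delta_\mathbb{X}(d,2)$ from Theorem~\ref{rth-footprint-lower-bound} forces all three quantities to coincide.

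The principal novelty here is combinatorial bookkeeping rather than a fresh analytic estimate: the delicate integer inequality of Theorem~\ref{pepe-vila} (via Lemma~\ref{pepe-vila-new-1}) has already been deployed in the proof of Theorem~\ref{vila-pepe-sarabia-2}, so the only genuinely new step is the translation of the colon-ideal condition $(L\colon(t^a,t^b))\neq L$ into the support hypothesis on $(a,b)$ defining $\mathcal{P}_d$. The main obstacle is therefore merely to record this translation cleanly, being careful that the symmetry $P(a,b)=P(b,a)$ permits replacing unordered pairs of monomials by ordered exponent pairs without altering the minimum.
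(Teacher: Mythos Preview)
Your proposal is correct and follows essentially the same route as the paper: both arguments sandwich ${\rm fp}_{I(\mathbb{X})}(d,2)$ and $\delta_\mathbb{X}(d,2)$ between the closed form $\psi(d)$ of Theorem~\ref{vila-pepe-sarabia-2} using Eqs.~(\ref{apr7-17-1})--(\ref{apr7-17-2}), Lemma~\ref{lemma1}, and Theorem~\ref{rth-footprint-lower-bound}, and then invoke Eq.~(\ref{apr7-17}). The only difference is that you spell out explicitly the identification of $\mathcal{M}_{\prec,d,2}$ with $\mathcal{P}_d$ via the colon-ideal analysis, whereas the paper absorbs this step into the sentence ``the result follows from Eq.~(\ref{apr7-17}).''
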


\begin{proof} Let $\psi(d)$ be the formula for
$\delta_\mathbb{X}(d,2)$ given in Theorem~\ref{vila-pepe-sarabia-2}.
Then using Eqs.~(\ref{apr7-17-1}) and Eqs.~(\ref{apr7-17-2}) one has
$\psi(d)\leq {\rm fp}_{I(\mathbb{X})}(d,2)$. By
Theorem~\ref{rth-footprint-lower-bound} one has 
${\rm fp}_{I(\mathbb{X})}(d,r)\leq \delta_\mathbb{X}(d,r)$, and by 
Lemma \ref{lemma1} one has $\delta_\mathbb{X}(d,r)\leq\psi(d)$.
Therefore
$$
\psi(d)\leq{\rm
fp}_{I(\mathbb{X})}(d,2)\leq\delta_\mathbb{X}(d,r)\leq\psi(d).
$$
Thus we have equality everywhere and  the result follows
from Eq.~(\ref{apr7-17}).
\end{proof}

\begin{remark}
Let $\psi(d)$ be the formula for
$\delta_\mathbb{X}(d,2)$ given in Theorem~\ref{vila-pepe-sarabia-2}.
Then 
$$\psi(d)=\min\left\{P(a,b)\vert\,
(a,b)\in \mathcal{P}_d\right\}$$ 
for $d\leq\textstyle\sum_{i=1}^n (d_i-1)$. This equality is
interesting in its own right. 
\end{remark}

\bibliographystyle{plain}

\begin{thebibliography}{10}

\bibitem{GHWCartesian} P. Beelen and M. Datta, 
Generalized Hamming weights of affine Cartesian codes, 
Finite Fields Appl. {\bf 51} (2018), 130--145. 

\bibitem{BHer}{W. Bruns and J. Herzog, 
{\em Cohen-Macaulay Rings\/},  Revised Edition, Cambridge University 
Press, 1997.}

\bibitem{carvalho} C.  Carvalho, On the second Hamming weight of some
Reed-Muller type codes, Finite Fields Appl. {\bf 24} (2013), 
88--94.

\bibitem{carvalho-lopez-lopez} C. Carvalho, V. G. Lopez Neumann and  H.
H. L\'opez, Projective nested cartesian codes, Bull. Braz. Math. Soc.
(N.S.) {\bf 48} (2017), no. 2, 283--302.

\bibitem{CLO} D. Cox, J. Little and D. O'Shea, {\it Ideals, 
Varieties, and Algorithms\/}, Springer-Verlag, 1992.

\bibitem{ghorpade} M. Datta and S. Ghorpade, Number of solutions of
systems of homogeneous polynomial equations over finite fields, Proc.
Amer. Math. Soc. {\bf 145} (2017), no. 2, 525--541.

\bibitem{duursma-renteria-tapia} I. M. Duursma, C. Renter\'\i a and
H. Tapia-Recillas,  
Reed-Muller codes on complete intersections, Appl. Algebra Engrg.
Comm. Comput.  {\bf 11}  (2001),  no. 6, 455--462.

\bibitem{Eisen}{D. Eisenbud, {\it Commutative Algebra with a view
toward Algebraic Geometry\/}, Graduate
Texts in  Mathematics {\bf 150}, Springer-Verlag, 1995.}

\bibitem{geil} O. Geil, 
On the second weight of generalized Reed-Muller codes, Des. Codes
Cryptogr. {\bf 48} (2008), 323--330. 

\bibitem{geil-2008} O. Geil, Evaluation codes from an affine variety
code perspective, Advances in algebraic geometry codes, 153--180, 
Ser. Coding Theory Cryptol., 5, World Sci. Publ., Hackensack, NJ,
2008.

\bibitem{geil-hoholdt} 
O. Geil and T. H{\o}holdt, 
Footprints or generalized Bezout's theorem, 
IEEE Trans. Inform. Theory {\bf 46}  (2000),  no. 2, 635--641.

\bibitem{geil-pellikaan} O. Geil and R. Pellikaan, 
On the structure of order domains, Finite Fields Appl. {\bf 8}
(2002), no. 3, 369--396. 

\bibitem{geil-thomsen} O. Geil and C. Thomsen, 
Weighted Reed--Muller codes revisited, Des. Codes Cryptogr. {\bf 66}
(2013), 195--220.

\bibitem{camps-sarabia-sarmiento-vila} 
M. Gonz\'alez-Sarabia, E. Camps, E. Sarmiento and R. H. Villarreal, 
The second generalized Hamming weight of some evaluation codes
arising from a 
projective torus, Finite Fields Appl. {\bf 52} (2018), 370--394.

\bibitem{GHW2014} M. Gonz\'alez--Sarabia and C. Renter\'{\i}a,
Generalized Hamming weights and some parameterized codes, Discrete
Math. {\bf 339} (2016), 813--821.  

\bibitem{GRT} M. Gonz\'alez-Sarabia, C. Renter\'\i a and H.
Tapia-Recillas, Reed-Muller-type codes over the Segre variety,  
Finite Fields Appl. {\bf 8}  (2002),  no. 4, 511--518. 

\bibitem{mac2} D. Grayson and M. Stillman, 
{\em Macaulay\/}$2$, 1996. 
Available via anonymous ftp from {\tt math.uiuc.edu}.

\bibitem{Pellikaan} P. Heijnen and R. Pellikaan, Generalized Hamming
weights of $q$--ary Reed--Muller codes, IEEE Trans. Inform. Theory {\bf
44} (1998), no. 1, 181--196.

\bibitem{helleseth} T. Helleseth, T. Kl{\o}ve and J. Mykkelveit, 
The weight distribution of irreducible cyclic codes with block
lengths $n_1((q^l-1)/N)$, Discrete Math. {\bf 18} (1977), 179--211.

\bibitem{Johnsen} T. Johnsen and H. Verdure, Generalized Hamming
weights for almost affine codes, IEEE Trans. Inform. Theory {\bf 63}
(2017), no. 4, 1941--1953.


\bibitem{klove} T. Kl{\o}ve, The weight distribution of linear codes
over $GF(q^l)$ having generator matrix over $GF(q)$, Discrete Math.
{\bf 23} (1978), no. 2, 159--168.

\bibitem{lachaud} G. Lachaud, The parameters of projective
Reed-Muller codes, Discrete Math. {\bf 81} (1990),  no. 2, 217--221.

\bibitem{cartesian-codes} H. H. L\'opez, C. Renter\'\i a and R. H.
Villarreal, Affine cartesian codes,
Des. Codes Cryptogr. {\bf 71} (2014), no. 1, 5--19.

\bibitem{MacWilliams-Sloane} F. J. MacWilliams and N. J. A. Sloane, 
The Theory of Error-correcting Codes, North-Holland, 1977. 

\bibitem{hilbert-min-dis} J. Mart\'\i nez-Bernal, Y. Pitones 
and R. H. Villarreal, Minimum
distance functions of graded ideals   
and Reed-Muller-type codes, 
J. Pure Appl. Algebra {\bf 221} (2017), 251--275. 

\bibitem{min-dis-ci} J. Mart\'\i nez-Bernal, Y. Pitones and R. H. Villarreal, 
Minimum distance functions of complete intersections, J. Algebra
Appl., to appear.

\bibitem{mercier-rolland} D. J. Mercier and R. Rolland, 
Polyn\^omes homog\`enes qui s'annulent sur l'espace projectif
$\mathbb P^m(\mathbb F_q)$, J. Pure Appl. Algebra {\bf 124} (1998),
227--240. 

\bibitem{footprint-ci}  L. N\'u\~nez-Betancourt, Y. Pitones and R. H.
Villarreal, Footprint and minimum distance functions,  
Commun. Korean Math. Soc. {\bf 33} (2018), no. 1, 85--101.


\bibitem{prim-dec-critical}  L. O'Carroll, F. Planas-Vilanova and R. H. Villarreal,
Degree and algebraic properties of lattice and matrix ideals, 
SIAM J. Discrete Math. {\bf 28} (2014), no. 1, 394--427. 

\bibitem{olaya} W. Olaya--Le\'on and C. Granados--Pinz\'on, The
second generalized Hamming weight of certain Castle codes, Des. Codes
Cryptogr. {\bf 76} (2015), no. 1, 81--87.

\bibitem{algcodes} C. Renter\'\i a, A. Simis and R. H. Villarreal,
Algebraic methods for parameterized codes 
and invariants of vanishing ideals over finite fields, Finite Fields
Appl. {\bf 17} (2011), no. 1, 81--104.  

\bibitem{schaathun-willems} H. G. Schaathun and W. Willems, A lower bound on the weight
hierarchies of product codes, Discrete Appl. Math. {\bf 128} (2003),
no. 1, 251--261.

\bibitem{sorensen} A. S{\o}rensen, Projective Reed-Muller codes, 
IEEE Trans. Inform. Theory {\bf 37} (1991), no. 6, 1567--1576.

\bibitem{Sta1}{R. Stanley, Hilbert functions of graded 
algebras, Adv.
Math. {\bf 28} (1978), 57--83.}

\bibitem{tsfasman} M. Tsfasman, S. Vladut and D. Nogin, {\it
Algebraic 
geometric codes{\rm:} basic notions}, Mathematical Surveys and
Monographs {\bf 139}, American Mathematical Society, 
Providence, RI, 2007. 

\bibitem{monalg-rev} R. H. Villarreal, {\it Monomial Algebras, Second Edition\/}, 
Monographs and Research Notes in Mathematics, Chapman and Hall/CRC, 2015.

\bibitem{wei} V. K. Wei, Generalized Hamming 
weights for linear codes, 
IEEE Trans. Inform. Theory {\bf 37}  (1991),  no. 5, 1412--1418. 

\bibitem{wei-yang} V. K. Wei and K. Yang, 
On the generalized Hamming weights of product codes, 
IEEE Trans. Inform. Theory {\bf 39}  (1993),  no. 5, 1709--1713. 

\bibitem{Yang} M. Yang, J. Lin, K. Feng and D. Lin, 
Generalized Hamming weights of irreducible cyclic codes, IEEE Trans.
Inform. Theory {\bf 61} (2015), no. 9, 4905--4913. 
\end{thebibliography}

\end{document}